\newtheorem{theorem}{Theorem}[section]
\newtheorem{proposition}[theorem]{Proposition}
\newtheorem{observation}[theorem]{Observation}
\newtheorem{lemma}[theorem]{Lemma}
\newtheorem{claim}[theorem]{Claim}
\newtheorem*{claim*}{Claim}
\newtheorem{corollary}[theorem]{Corollary}
\newtheorem{Main Conjecture}[theorem]{Main Conjecture}
\newtheorem{conjecture}[theorem]{Conjecture}
\newtheorem{problem}[theorem]{Problem}
\theoremstyle{remark}
\newtheorem{example}[theorem]{Example}
\theoremstyle{plain}
\newcommand{\cellsize}{14}
\newlength{\cellsz} \setlength{\cellsz}{\cellsize\unitlength}
\newsavebox{\cell}
\sbox{\cell}{\begin{picture}(\cellsize,\cellsize)
\put(0,0){\line(1,0){\cellsize}}
\put(0,0){\line(0,1){\cellsize}}
\put(\cellsize,0){\line(0,1){\cellsize}}
\put(0,\cellsize){\line(1,0){\cellsize}}
\end{picture}}
\newcommand\cellify[1]{\def\thearg{#1}\def\nothing{}%
\ifx\thearg\nothing
\vrule width0pt height\cellsz depth0pt\else
\hbox to 0pt{\usebox{\cell} \hss}\fi%
\vbox to \cellsz{
\vss
\hbox to \cellsz{\hss$#1$\hss}
\vss}}
\newcommand\tableau[1]{\vtop{\let\\\cr
\baselineskip -16000pt \lineskiplimit 16000pt \lineskip 0pt
\ialign{&\cellify{##}\cr#1\crcr}}}
\newcommand{\excise}[1]{}
\begin{document}
\pagestyle{plain}
\title{Reduced word enumeration, complexity, and randomization}
\author{Cara Monical}
\address{Sandia National Laboratories, Albuquerque, NM 87185, USA}
\email{caramonical.math@gmail.com}
\author{Benjamin Pankow}
\author{Alexander Yong}
\address{Dept.~of Mathematics, U.~Illinois at Urbana-Champaign, Urbana, IL 61801, USA} 
\email{bpankow2@illinois.edu, ayong@illinois.edu}
\date{February 15, 2019}

\begin{abstract}
A \emph{reduced word} of a permutation $w$ is a minimal length expression of $w$ 
as a product of simple transpositions. We examine the computational complexity, formulas and (randomized) algorithms for their enumeration. 
In particular, we prove that the \emph{Edelman-Greene statistic}, defined by S.~Billey-B.~Pawlowski, is typically exponentially large. This implies a result
of B.~Pawlowski, that it has exponentially growing expectation. Our result is established by a formal run-time analysis of A.~Lascoux-M.-P.~Sch\"utzenberger's \emph{transition algorithm}. 
The more general problem of
Hecke word enumeration, and its closely related question of
counting \emph{set-valued standard Young tableaux}, is also investigated.
The latter enumeration problem is further motivated by work on \emph{Brill-Noether varieties} due to M.~Chan-N.~Pflueger and D.~Anderson-L.~Chen-N.~Tarasca.
\end{abstract}

\maketitle

\vspace{-.2in}
\section{Introduction}\label{sec:1}

\subsection{Reduced word combinatorics} Let $S_n$ denote the symmetric group on $\{1,2,\ldots,n\}$.
Each $w\in S_n$
can be expressed as a product of $\ell(w)$ simple transpositions $s_i= (i, i+1)$, where $\ell(w)$ is the number of \emph{inversions} of $w$, 
i.e., pairs $i<j$ such that $w(i)>w(j)$. Such an expression $w=s_{i_1}s_{i_2}\cdots s_{i_{\ell(w)}}$
is a \emph{reduced word} for $w$. 

Let ${\sf Red}(w)$ be the set of reduced words for $w$.
R.~P.~Stanley \cite{Stanley1984} defined a symmetric function 
$F_w$ such that 
\begin{equation}
\label{eqn:firstformula}
\#{\sf Red}(w)=
\text{the coefficient of
$x_1 x_2\cdots x_{\ell(w)}$ in $F_w$.}
\end{equation}
In connection to \emph{ibid.},
P.~Edelman-C.~Greene \cite[Section~8]{Edelman.Greene} proved that
\begin{equation}
\label{eqn:stanleyformula}
\#{\sf Red}(w)=\sum_{\lambda} a_{w,\lambda}f^{\lambda}, \text{ \ where}
\end{equation}
\begin{itemize}
\item $f^{\lambda}$ is the number of \emph{standard Young tableaux}
of shape $\lambda$, that is, row and column increasing bijective fillings of the Young diagram of $\lambda$ using $1,2,\ldots,|\lambda|$. The  
\emph{hook-length formula} of J.~S.~Frame-G.~de B.~Robinson-R.~M.~Thrall \cite{hook}
states
\begin{equation}
\label{eqn:HLF}
f^{\lambda}=\frac{|\lambda|!}{\prod_b h_b},
\end{equation}
where the product is over all boxes $b\in \lambda$ and $h_b$ is the \emph{hooklength} of $b$, i.e., the number of boxes weakly right and strictly below $b$.
\item $a_{w,\lambda}$ counts \emph{EG tableaux}: row and column increasing fillings $T$ of $\lambda$ such that reading the entries 
$(i_1,i_2,\ldots, i_{|\lambda|})$ of $T$ along columns, top to bottom, and right to left, gives a reduced word $s_{i_1}\cdots s_{i_{|\lambda|}}$ for $w$ (cf.~\cite{BKSTY}).
\end{itemize}

Let $w_0=n \ n-1 \ n-2 \ \ldots \ 3 \ 2 \ 1$ be the unique longest length
permutation of $S_n$ (hence $\ell(w_0)={n\choose 2}$). R.~P.~Stanley \cite{Stanley1984} proved that,
in this case, (\ref{eqn:stanleyformula}) is short:
\begin{equation}
\label{eqn:w0thing}
\#{\sf Red}(w_0)=f^{(n-1,n-2,\ldots,3,2,1)};
\end{equation}
hence $\#{\sf Red}(w_0)$ is computed by (\ref{eqn:HLF}). 

One measure of the brevity of (\ref{eqn:stanleyformula}) is the \emph{Edelman-Greene statistic} on $S_n$, 
\[{\sf EG}(w)=\sum_{\lambda} a_{w,\lambda};\]
this was introduced by S.~Billey-B.~Pawlowski \cite{Billey}.
From (\ref{eqn:w0thing}), one sees ${\sf EG}(w_0)=1$.
Permutations $w$ such that ${\sf EG}(w)=1$ are \emph{vexillary}. These permutations are characterized by \emph{$2143$-pattern avoidance}: there are no indices $i_1<i_2<i_3<i_4$
such that 
$w(i_1),w(i_2),w(i_3),w(i_4)$ are in the same relative
order as $2143$. For instance,
$w=\underline{5}\underline{4}27\underline{8}31\underline{6}$ is not vexillary; the underlined positions give a $2143$ pattern.
Each such $w$ has \emph{shape} $\lambda(w)$ (defined in Section~\ref{sec:2.2}). Extending (\ref{eqn:w0thing}), whenever $w$ is vexillary,
\begin{equation}
\label{eqn:whenvex}
\#{\sf Red}(w)=f^{\lambda(w)};
\end{equation}
see, e.g., \cite[Corollary~2.8.2]{Manivel}. Our main result  (Theorem~\ref{thm:firstmain}) is that {\sf EG} is typically large. This implies
a (weak version) of a Theorem of B.~Pawlowski \cite[Theorem~3.2.7]{Brendan:thesis}:

\begin{theorem}[Average exponential growth]
\label{thm:averagemain}
${\mathbb E}[{\sf EG}]=\Omega(c^n)$, 
for some fixed constant $c>1$. 
\end{theorem}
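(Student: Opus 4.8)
The plan is to deduce this from a \emph{pointwise} bound---Theorem~\ref{thm:firstmain}, that ${\sf EG}$ is typically exponentially large---via a one-line averaging step, and to obtain that pointwise bound from a run-time analysis of the transition algorithm. For the averaging step: suppose one knows constants $\delta>0$ and $c_0>1$ such that, for all large $n$, at least $\delta\,n!$ permutations $w\in S_n$ satisfy ${\sf EG}(w)\ge c_0^{\,n}$. Since ${\sf EG}\ge 0$ on the remaining permutations,
\[
{\mathbb E}[{\sf EG}]=\frac{1}{n!}\sum_{w\in S_n}{\sf EG}(w)\ \ge\ \frac{1}{n!}\,(\delta\, n!)\,c_0^{\,n}\ =\ \delta\, c_0^{\,n}=\Omega(c_0^{\,n}).
\]
There is ample slack here: even a bound of the shape ``at least $n!/c_1^{\,n}$ permutations satisfy ${\sf EG}(w)\ge c_0^{\,n}$'' for some $c_0>c_1>1$ yields $\Omega((c_0/c_1)^n)$, so for \emph{this} statement it suffices to prove the pointwise estimate on any event that is not too rare.

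For the pointwise estimate I would first transfer from $F_w$ to ${\sf EG}$. The Lascoux--Sch\"utzenberger transition recurrence for Stanley symmetric functions furnishes, for each non-vexillary $w$, a finite nonempty set $\Psi(w)$ of permutations---each of length $\ell(w)$---with $F_w=\sum_{v\in\Psi(w)}F_v$, the recursion terminating with vexillary leaves. Summing Schur coefficients, and using that ${\sf EG}(w)=1$ precisely when $w$ is vexillary (as recalled above, following \cite{Billey}), one gets
\[
{\sf EG}(w)=\sum_{v\in\Psi(w)}{\sf EG}(v),\qquad\text{with }{\sf EG}(w)=1\text{ when $w$ is vexillary.}
\]
Hence ${\sf EG}(w)$ is exactly the number of leaves of the transition tree $\mathcal T(w)$, and the goal becomes: for a non-negligible set of $w\in S_n$, $\#\mathcal T(w)\ge c_0^{\,n}$.

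I would bound $\#\mathcal T(w)$ by an amortized (potential-function) analysis of the transition algorithm run on a generic input. One seeks a potential $\phi(w)\in\mathbb{Z}_{\ge 0}$ that (i) vanishes on vexillary permutations; (ii) is $\Omega(n)$ for a positive proportion of $w\in S_n$---morally, a count of disjoint ``branching sites'', local $2143$-type configurations that force $|\Psi(\cdot)|\ge 2$ once the recursion reaches them; and (iii) is stable under a transition step, namely: at a branching node every child has $\phi\ge\phi(w)-1$, while non-branching steps do not (on average) decrease $\phi$. Granted (i) and (iii), a short induction along $\mathcal T(w)$ gives $\#\mathcal T(w)\ge 2^{\phi(w)}$; with (ii) and the averaging step this yields the theorem (for any $c_0>1$, which is all that is needed).

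The crux is (iii): controlling how a transition---which composes $w$ with transpositions localized near its last descent---affects the rest of the permutation, so that $\Omega(n)$ branching sites survive until the recursion terminates (rather than being destroyed after $o(n)$ steps) and the possibly-non-vexillary children keep branching; this is the ``formal run-time analysis of the transition algorithm'' promised in the abstract. I would emphasize that the analysis must hold over a \emph{dense} subset of $S_n$: individual permutations with exponentially large ${\sf EG}$ are easy to exhibit---since $F_{u\oplus v}=F_uF_v$ one has ${\sf EG}(u\oplus v)\ge{\sf EG}(u){\sf EG}(v)$, so ${\sf EG}(q^{\oplus k})\ge{\sf EG}(q)^k$ for a fixed non-vexillary $q$---but such structured families are far too sparse in $S_n$ to move the average, so the exponential expectation is intrinsically a statement about the generic permutation. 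A final, routine, point is to verify the clean passage of the transition recurrence to the linear functional ${\sf EG}=\sum_\lambda a_{w,\lambda}$, including the vexillary base case and the shape $\lambda(w)$.
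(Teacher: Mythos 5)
Your outline reproduces the architecture of the paper's argument: the identification ${\sf EG}(w)=\#{\mathcal L}(w)$ via the transition recurrence (Lemma~\ref{lemma:basis} and Theorem~\ref{thm:noorig}), a potential measuring $2143$-type structure, the branching bound of Lemma~\ref{lemma:xyz}, a probabilistic lower bound on the potential for typical $w$, and the one-line averaging step. Your observation that for \emph{this} theorem a positive-proportion event suffices is correct, and in fact slightly simpler than what the paper uses: a positive proportion of $w\in S_n$ with $N_{2143,n}(w)\geq \epsilon n^4$ already follows from the first moment ${n\choose 4}/4!$ (Lemma~\ref{prop:Bona}) together with the trivial bound $N_{2143,n}\leq {n\choose 4}$, with no variance estimate needed; the paper invokes Chebyshev and the $O(n^7)$ variance bound only because it proves the stronger high-probability statement, Theorem~\ref{thm:firstmain}.

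There is, however, a genuine gap: your property (iii) is precisely the technical heart of the proof, and you state it as a goal rather than establish it. In the paper the potential is $N_{2143,n}(\cdot)$ normalized by $2n^3+3n^2+n$, and (iii) decomposes into two nontrivial facts: Proposition~\ref{claim:def}, that a node of ${\mathcal T}(w)$ with exactly one child has weakly more $2143$ patterns in that child --- proved by an explicit injection on pattern embeddings through a ten-case analysis of the marching move, relying on Claims~\ref{claim:anotherjune1claim} and~\ref{claim:june1claim} about the geometry of the accessible box and the unique pivot --- and Lemma~\ref{claim:abc}, that any single step destroys at most $O(n^3)$ patterns because only three values of the permutation change. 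Your proposed potential (``a count of disjoint branching sites'') is never defined, and nothing in the proposal shows that such a quantity satisfies (i)--(iii). Moreover, the hedge that non-branching steps ``do not (on average) decrease $\phi$'' would not suffice even if proved: the leaf-count bound ${\sf EG}(w)\geq 2^{Q(w)}$ via Lemma~\ref{lemma:xyz} requires $\Omega(n)$ branch points along \emph{every} root-to-leaf path, so the non-decrease at single-child nodes must hold deterministically, as in Proposition~\ref{claim:def}, not merely in expectation.
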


\subsection{Computational complexity and transition}
Our proof of Theorem~\ref{thm:averagemain} applies the \emph{transition algorithm} of A.~Lascoux-M.~P.~Sch\"utzenberger \cite{LS:transition}
(cf.~\cite[Sections~2.7, 2.8]{Manivel}). This algorithm constructs 
 a tree ${\mathcal T}(w)$ whose  root is $w$ and the leaves ${\mathcal L}(w)$ are labelled with
vexillary permutations (with multiplicity). With this, 
\begin{equation}
\label{eqn:transitionexp}
\#{\sf Red}(w)=\sum_{v\in {\mathcal L}(w)} f^{\lambda(v)};
\end{equation}
see Section~\ref{sec:2} for details. Different $v$ may give the same $\lambda(v)$. After combining such terms, (\ref{eqn:transitionexp}) is the same as (\ref{eqn:stanleyformula}); see Lemma~\ref{lemma:basis}.

The (practical) efficiency of (extensions/variations of) transition has been mentioned a number of times.  
S.~Billey \cite{Billey:transition} calls transition ``one of the most efficient methods'' to compute Schubert polynomials. 
See also A.~Buch \cite[Section~3.4]{Buch:qhpartial} and Z.~Hamaker-E.~Marburg-B.~Pawlowski~\cite{Hamaker:SchurP}. On the other hand, concerning the application of transition to computing the Littlewood-Richardson coefficients \cite{LS:transition}, A.~Garsia \cite[p.~52]{Garsia} writes:
\begin{quotation}
``Curiously,
their algorithm (in spite of their claims to
the contrary) is hopelessly inefficient as
compared with well known methods.''
\end{quotation}
He also refers to transition as ``efficient'' for a different purpose in his study of ${\sf Red}(w)$.

Theorem~\ref{thm:averagemain} is actually a reformulation of the following result 
which is a \emph{formal} complexity analysis of transition:
\begin{theorem}
\label{thm:exponentialintro}
${\mathbb E}(\#\mathcal L)=\Omega(c^n)$ for a fixed constant $c>1$. That is the average running time 
of transition, as an algorithm to compute $\#{\sf Red}(w)$, is at least exponential in $n$.
\end{theorem}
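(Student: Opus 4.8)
The plan is to reduce the claim to a lower bound on $\mathbb{E}[{\sf EG}]$ — as the paper's phrasing signals, Theorem~\ref{thm:exponentialintro} and Theorem~\ref{thm:averagemain} are two sides of the same coin. The bridge is the observation that the number of leaves $\#\mathcal L(w)$ in the transition tree is at least the number of \emph{distinct} vexillary shapes that appear among the leaf labels, hence (by Lemma~\ref{lemma:basis}, which identifies the combined leaf sum with the Edelman–Greene expansion~\eqref{eqn:stanleyformula}) at least the number of partitions $\lambda$ with $a_{w,\lambda}>0$. Since each $a_{w,\lambda}\ge 1$ when it is nonzero, this count is in turn bounded below by... no — one must be slightly more careful, since ${\sf EG}(w)=\sum_\lambda a_{w,\lambda}$ could be large while the number of nonzero terms stays small. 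The cleaner route is: $\#\mathcal L(w)$ counts leaves \emph{with multiplicity}, and in the transition recursion each application of the Monk/transition rule replaces a node by a sum of children indexed by the terms of a Monk-type product, so that $\#\mathcal L(w)=\sum_{v\in\mathcal L(w)} 1 \ge \sum_\lambda a_{w,\lambda} = {\sf EG}(w)$ — the point being that collecting equal-shape leaves is exactly the passage from~\eqref{eqn:transitionexp} to~\eqref{eqn:stanleyformula}, and that passage can only decrease the number of summands. Therefore $\mathbb{E}(\#\mathcal L) \ge \mathbb{E}[{\sf EG}]$, and it suffices to prove $\mathbb{E}[{\sf EG}]=\Omega(c^n)$, i.e.\ Theorem~\ref{thm:averagemain}.

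Next I would establish Theorem~\ref{thm:averagemain} itself, which I expect the paper does via its main technical result Theorem~\ref{thm:firstmain} (the ``${\sf EG}$ is typically large'' statement). The strategy there is the standard first-moment-from-below argument: exhibit an explicit family $\mathcal{A}_n \subseteq S_n$ of permutations, each with ${\sf EG}(w) \ge d^{n}$ for some $d>1$, such that $\#\mathcal{A}_n / n! \ge \varepsilon^{n}$ for some $\varepsilon>0$ with $d\varepsilon > 1$; then $\mathbb{E}[{\sf EG}] \ge (\#\mathcal{A}_n/n!)\cdot d^{n} = (d\varepsilon)^{n} = \Omega(c^n)$. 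A natural candidate for $\mathcal{A}_n$ is built from permutations containing many disjoint $2143$-patterns — e.g.\ direct sums or suitable interleavings of roughly $n/4$ copies of the non-vexillary pattern $2143$ — so that ${\sf EG}$ is multiplicative (or at least super-multiplicative) across the blocks and each block contributes a constant factor $>1$. One then needs the count of such $w$ to be exponentially large as a fraction of $n!$, which follows because one has exponentially many choices of where to place and how to shuffle the blocks.

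The main obstacle, and the step I would expect to occupy the bulk of the work, is the lower bound ${\sf EG}(w)\ge d^n$ for the chosen family — equivalently, controlling how the transition algorithm behaves on these permutations. Running transition from below, each application of the transition rule at a ``bad'' (non-vexillary) position splits into at least two children, and one must argue that along the recursion the relevant $2143$-patterns remain and keep forcing splits, so that the tree has at least $2^{\Omega(n)}$ leaves; equivalently one tracks the shape statistic $\lambda(v)$ and shows exponentially many distinct shapes (or exponential total multiplicity) survive. Making the ``patterns persist and keep branching'' claim precise is delicate because transition can merge or destroy patterns, so the family $\mathcal{A}_n$ must be engineered — likely via a padded/spread-out block construction — so that the branching at one block does not interfere with the descent structure at the others. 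I would handle this by choosing blocks separated by enough ``inert'' values that the transition recursion factors as a product over blocks, reducing the estimate to a single $O(1)$-size computation on $2143$ (or a slightly larger seed pattern) that is checked by hand; the product of these constant per-block contributions then gives the desired $d^n$.
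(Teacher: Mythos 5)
Your first step is fine: by Lemma~\ref{lemma:basis} one in fact has equality $\#\mathcal L(w)={\sf EG}(w)$ (both sides record the expansion of $F_w$ in the Schur basis), so Theorems~\ref{thm:exponentialintro} and~\ref{thm:averagemain} are indeed the same statement, exactly as in the paper. The gap is in the second half. Your first-moment bound needs a family $\mathcal A_n$ with $\#\mathcal A_n/n!\geq \varepsilon^n$ for a \emph{constant} $\varepsilon>0$, i.e.\ $\#\mathcal A_n\geq \varepsilon^n\, n!=e^{n\log n-O(n)}$. But any family on which your key mechanism works --- permutations decomposing into $\Theta(n)$ bounded-size blocks (direct sums, possibly padded by inert values) so that $F_w$, and hence the transition recursion, factors block by block --- has at most $C^n$ elements for some constant $C$: each block is one of $O(1)$ patterns and there are at most $2^n$ ways to cut $[n]$ into bounded pieces. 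Its density in $S_n$ is therefore $e^{-\Theta(n\log n)}$, smaller than $\varepsilon^n$ for every fixed $\varepsilon$, so $(\#\mathcal A_n/n!)\cdot d^n\to 0$ for any constant $d$ and the bound yields nothing. Conversely, families of genuinely exponential density (for instance, prescribing the relative order of $n/4$ disjoint quadruples of consecutive positions, which has density $(1/24)^{n/4}$) have no direct-sum structure: the values of different blocks interleave, $F_w$ does not factor, and the needed claim ${\sf EG}(w)\geq d^n$ for such $w$ is precisely the hard ``typical'' statement you have not proved --- super-multiplicativity of ${\sf EG}$ over disjoint \emph{pattern occurrences}, as opposed to direct summands, is not available. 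So the tension you flag at the end is not a technicality to be engineered away; it defeats the plan as stated.

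The paper avoids this dichotomy by working with pattern counts along the transition tree rather than with a special family: a uniformly random $w$ has $\Theta(n^4)$ occurrences of $2143$ with high probability (expectation $\binom{n}{4}/4!$, concentration via Chebyshev using the $O(n^7)$ variance bound), Proposition~\ref{claim:def} shows a node with a single child cannot decrease the number of $2143$ occurrences, Lemma~\ref{claim:abc} shows a single transition step decreases it by at most $2n^3+3n^2-n$, so every root-to-leaf path of $\mathcal T(w)$ passes through $\Omega(n)$ branch nodes, and Lemma~\ref{lemma:xyz} converts this into $\#\mathcal L(w)\geq 2^{\alpha n}$ with probability at least $1-n^{-2\gamma}$; this is Theorem~\ref{thm:firstmain}, which immediately gives the expectation bound. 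If you want to salvage a first-moment argument, you need a lower bound for ${\sf EG}$ valid on a set of density at least $c^n$, and the only available route to that is exactly this typical-case analysis.
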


Theorem~\ref{thm:firstmain} strengthens Theorem~\ref{thm:exponentialintro} to show that the ``typical'' running time is exponentially large.
To prove Theorem~\ref{thm:exponentialintro}
  we use that the expected number of occurences of a fixed pattern $\pi\in {S}_k$ in $w\in{S}_n$ is ${n\choose k}/k!$. Thus for $u=2143$, this expectation is $O(n^4)$. 
  One shows each step of transition reduces the number of $2143$ patterns by $O(n^3)$. Using the graphical description of 
transition by A.~Knutson and the third author \cite{Knutson.Yong}, a node $u$ of ${\mathcal T}(w)$ has exactly one child $u'$ only if $u'$ has weakly more $2143$ patterns than $u$ does. Consequently, ${\mathcal T}(w)$ has $\Omega(n)$ branch points along any root-to-leaf path and 
thus exponentially many leaves.  (In fact, the $c>1$ from our argument is close 
to $1$.)

Of course, the exponential average run-time of transition does not imply computing $\#{\sf Red}(w)$ is 
hard. Suppose one encodes a permutation $w$ by its \emph{Lehmer code} ${\sf code}(w)=(c_1,c_2,\ldots,c_L)$. \emph{What is the worst case complexity of computing $\#{\sf Red}(w)$ given input ${\sf code}(w)$?}

L.~Valiant \cite{Valiant} introduced the complexity class $\#{\sf P}$ of  
problems that count the number of accepting paths of a non-deterministic Turing machine running in polynomial time in the 
length of the input. Let ${\sf FP}$ be the class of function problems solvable in polynomial time on a deterministic Turing machine. It is basic theory that ${\sf FP}\subseteq \#{\sf P}$.
 \begin{observation}
 \label{obs:complex}
$\#{\sf Red}(w)\not\in \#{\sf P}$. In particular, 
$\#{\sf Red}(w)\not\in {\sf FP}$. 
 \end{observation}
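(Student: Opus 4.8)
The plan is to refute membership in $\#{\sf P}$ purely on size grounds. The point is that once the Lehmer code is written in binary, a permutation of a symmetric group of very large rank can have a very short code, while $\#{\sf Red}$ of that permutation can be enormous. I would first record the standard size bound for these classes: if $f\in\#{\sf P}$, then $f(x)$ counts the accepting computation paths of a nondeterministic Turing machine $M$ that halts within $p(|x|)$ steps for some polynomial $p$; since $M$ has bounded branching, its computation tree on input $x$ has at most $2^{O(p(|x|))}$ leaves, so $f(x)\le 2^{q(|x|)}$ for a polynomial $q$; equivalently, the bit-length of $f(x)$ is polynomially bounded in $|x|$. The same conclusion holds for every $f\in{\sf FP}$, either directly — a deterministic machine running in time $q(|x|)$ cannot emit more than $q(|x|)$ output bits — or via ${\sf FP}\subseteq\#{\sf P}$.

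Next I would produce a family of permutations violating this bound. For $N\ge 1$, let $w_N$ be the dominant permutation with ${\sf code}(w_N)=(N,N)$, that is, $w_N=(N+1)\ (N+2)\ 1\ 2\ \cdots\ N\in S_{N+2}$ in one-line notation. Since $(N,N)$ is a partition, $w_N$ is dominant, hence $132$-avoiding, hence $2143$-avoiding, hence vexillary with $\lambda(w_N)=(N,N)$ (see, e.g., \cite{Manivel}). By (\ref{eqn:whenvex}) and the hook-length formula (\ref{eqn:HLF}),
\[
\#{\sf Red}(w_N)\;=\;f^{(N,N)}\;=\;\frac{(2N)!}{(N+1)!\,N!}\;=\;C_N,
\]
the $N$-th Catalan number. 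Retaining only the $i=0$ and $i=N$ summands in $C_{N+1}=\sum_{i=0}^{N}C_iC_{N-i}$ gives $C_{N+1}\ge 2C_N$ for $N\ge 1$, so by induction from $C_1=1$ we get $C_N\ge 2^{N-1}$; in particular $\#{\sf Red}(w_N)$ needs at least $N-1$ bits to record.

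Finally I would compare the two magnitudes. Written in binary, ${\sf code}(w_N)=(N,N)$ has length $|{\sf code}(w_N)|=\Theta(\log N)$. If $\#{\sf Red}$ were computed by a $\#{\sf P}$ machine (or by an ${\sf FP}$ machine), the first step would force $\log_2\#{\sf Red}(w_N)=O((\log N)^{k})$ for some fixed $k$, contradicting $\log_2\#{\sf Red}(w_N)\ge N-1$ once $N$ is large. Hence $\#{\sf Red}\notin\#{\sf P}$, and therefore $\#{\sf Red}\notin{\sf FP}$.

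The only step that needs care is the claim that constant-length inputs genuinely describe permutations of unbounded rank: one should note that the Lehmer code with trailing zeros deleted is a canonical, well-defined encoding, compatible with the inclusions $S_n\hookrightarrow S_{n+1}$, so $(N,N)$ is a legitimate input of length $\Theta(\log N)$ — and it is exactly this compression that turns the (merely exponential) growth of $C_N$ in $N$ into doubly exponential growth in the input length. I do not anticipate any further difficulty; should one wish to avoid invoking (\ref{eqn:whenvex}), the bound $\#{\sf Red}(w_N)\ge 2^{N-1}$ can instead be obtained by exhibiting $2^{N-1}$ distinct reduced words for $w_N$ directly, e.g.\ via Edelman--Greene insertion \cite{Edelman.Greene} applied to the $2^{N-1}$ standard Young tableaux of shape $(N,N)$.
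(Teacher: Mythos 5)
Your proposal is correct and follows essentially the same route as the paper: the same witness family with Lehmer code $(N,N)$, the same use of (\ref{eqn:whenvex}) and the hook-length formula to identify $\#{\sf Red}$ with the Catalan number, and the same doubly-exponential-output-versus-$O(\log N)$-input-length contradiction. The only difference is cosmetic: you prove the standard fact that $\#{\sf P}$ (and ${\sf FP}$) functions have polynomially bounded output length directly, where the paper cites \cite[Section~3]{Narayanan} for it.
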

 \begin{proof}
Let $\theta_n$ be the vexillary permutation with ${\sf code}(\theta_n)=(n,n)$. Then, using (\ref{eqn:whenvex}), 
\begin{equation}
\label{eqn:Catalan}
\#{\sf Red}(\theta_n)=f^{(n,n)}=C_n:=\frac{1}{n+1}{2n\choose n}.
\end{equation}
The middle equality is textbook: there is a bijection between standard Young tableaux
of shape $(n,n)$ and Dyck paths from $(0,0)$ to $(2n,0)$; both are enumerated by the \emph{Catalan
number} $C_n$. Now, $\#{\sf Red}(\theta_n)$ is \emph{doubly} exponential in the input length $O(\log n)$. No such problem can be in $\#{\sf P}$ \cite[Section~3]{Narayanan}. 
($\#{\sf Red}(w)\not\in {\sf FP}$ is true from this argument for the simple reason that  
it takes exponential time just to write down the output.)
\end{proof}
 
A counting problem ${\mathcal P}$ is $\#{\sf P}$-{\sf hard} if any problem in $\#{\sf P}$ has a polynomial-time counting reduction to ${\mathcal P}$. \emph{Is $\#{\sf Red}(w)\in \#{\sf P}$-{\sf hard}?} 

Observation~\ref{obs:complex} is
dependent on the choice of encoding. For example, if one encodes a permutation $w\in S_n$ in the inefficient one-line notation, the input takes $O(n\log n)$ space. Since
$\ell(w)\leq {n\choose 2}$ is polynomial in the input length, it follows that $\#{\sf Red}(w)\in \#{\sf P}$; see \cite{Samuels}. 

\begin{problem}
\label{conj:Dec16conj}
Does there exist an $n^{O(1)}$-algorithm to compute $\#{\sf Red}(w)$?
\end{problem}
It is easy to see that $\#{\sf Red}(u)\leq \#{\sf Red}(us_i)$ whenever $\ell(us_i)=\ell(u)+1$. Hence,
$\#{\sf Red}(w)$ is maximized at $w=w_0$. So, by (\ref{eqn:w0thing}), 
$\log (\#{\sf Red}(w))\in n^{O(1)}$. Thus,
unlike Observation~\ref{obs:complex}, there is no easy negative solution to Problem~\ref{conj:Dec16conj} (and any negative solution implies ${\sf FP}\neq \#{\sf P}$, which is a famous open problem). Indeed, in the vexillary case (\ref{eqn:whenvex}), the hook-length formula (\ref{eqn:HLF}) gives a $n^{O(1)}$-algorithm for $\#{\sf Red}(w)$. 

\subsection{Hecke words}
Section~\ref{sec:4} studies the more general problem of counting ${\sf Hecke}(w,N)$, the set of \emph{Hecke words} of length $N$ whose \emph{Demazure product} is a given $w\in {S}_n$. Here, the role of Stanley's symmetric polynomial is played by the \emph{stable Grothendieck polynomial} defined by S.~Fomin and A.~N.~Kirillov \cite{Fomin.Kirillov}. Using work of S.~Fomin and C.~Greene \cite{Fomin.Greene} and of A.~Buch, A.~Kresch, M.~Shimozono, H.~Tamvakis and the third author \cite{BKSTY},
one has two analogues of the results of Edelman-Greene \cite{Edelman.Greene}. However,  useful enumeration \emph{formulas} for Hecke words, even when $w$ is vexillary, is a challenge.

As explained by Proposition~\ref{prop:secondG}, enumerating Hecke words is closely related to the
problem of counting $f^{\lambda,N}$, the number of \emph{set-valued tableaux} \cite{Buch:KLR} that are
\emph{$N$-standard} of shape $\lambda$. These
are fillings $T$ of
the boxes of $\lambda$ by $1,2,\ldots,N$, where each entry appears exactly once, and if one chooses precisely one entry from each box of $T$, one obtains a semistandard tableau.  For example,
if $N=8$ and $\lambda=(3,2)$, one tableau is 
\ $\tableau{\scalebox{0.7}{1,2} & \scalebox{0.7}{4,5} &  8\\
3 & \scalebox{0.7}{6,7}}$.

By Observation~\ref{obs:complex}'s reasoning, (\ref{eqn:Catalan}) shows there is no algorithm 
to compute $f^{\lambda,N}$ that is
polynomial-time in the bit-length of the input $(\lambda,N)$. 

\begin{problem}
\label{prob:Dec21abc}
Does there exist an algorithm to compute $f^{\lambda,N}$ that is polynomial in $|\lambda|$ and $N$?
\end{problem}

Clearly, (\ref{eqn:HLF}) gives a solution
when $N=|\lambda|$. 
Using a theorem of C.~Lenart \cite{Lenart}, there exists an $|\lambda|^{O(1)}$ algorithm 
for any $\lambda$ and where $N= |\lambda|+k$, if $k$ is \emph{fixed} (Proposition~\ref{prop:plus2}).

Recent 
work of M.~Chan-N.~Pflueger \cite{Chan} and D.~Anderson-L.~Chen-N.~Tarasca \cite{Anderson} motivates 
study of $f^{\lambda,N}$ in terms of
\emph{Brill-Noether varieties}. We remark on two manifestly nonnegative formulas for the Euler characteristics of these varieties (Corollary~\ref{cor:Euler}).

\subsection{Randomization} Section~\ref{sec:5} gives three randomized algorithms to estimate 
$\#{\sf Red}(w)$ and/or $\#{\sf Hecke}(w,N)$
using \emph{importance sampling}. That is, let $S$ be a finite set. Assign $s\in S$ probability $p_s$. Let ${\sf Z}$ be a random variable on $S$ with $Z(s)=1/p_s$. Then 
${\mathbb E}({\sf Z})=\sum_{s\in S} p_s\times \frac{1}{p_s}=\#S$.
Using this, one can devise simple Monte Carlo algorithms to estimate $\#S$. The idea goes back to at least a 1951 article of H.~Kahn-T.~E.~Harris
\cite{Kahn.Harris}, who furthermore credit J.~von Neumann. The application to combinatorial enumeration
was popularized through D.~Knuth's article \cite{Knuth:science} which applies it to 
estimating the number of self-avoiding walks in a grid. An application to approximating the \emph{permanent} was given by L.~E.~Rasmussen \cite{Rasmussen}. More recently, J.~Blitzstein-P.~Diaconis \cite{Diaconisblitz} develop an importance sampling algorithm to estimate
the number of graphs with a given degree sequence. We are suggesting another avenue of applicability, to core objects of algebraic combinatorics.

\section{The Graphical Transition algorithm}
\label{sec:2}
\subsection{Preliminaries}
The \emph{graph} $G(w)$ of a
permutation $w\in {S}_n$ is the $n\times n$ grid, with a $\bullet$ placed
in position $(i,w(i))$ (in matrix coordinates).
The \emph{Rothe diagram}
of $w$ is given by 
\[D(w)=\{(i,j): 1\leq i,j\leq n, \ j<w(i), \ i<w^{-1}(j)\}.\]
Pictorially, this is described by
striking out boxes below and to the right of each $\bullet$ in $G(w)$. $D(w)$ consists of the remaining boxes. 
If it exists, the connected component involving $(1,1)$ is the \emph{dominant component}. 
The \emph{essential set} of $w$ 
consists of the maximally southeast boxes of each
connected component of $D(w)$, i.e.,
\[{\mathcal Ess}(w)=\{(i,j)\in D(w): (i+1,j),(i,j+1)\not\in D(w)\}.\]
If it exists, the \emph{accessible box} is the southmost then eastmost essential set box \emph{not in the dominant component}. 
For example, if 
$w=54278316 \in {S}_{8}$, $D(w)$ is depicted by: 
\[
\begin{tikzpicture}[scale=.5]
\draw (0,0) rectangle (8,8);

\draw (0,7) rectangle (1,8);
\draw (1,7) rectangle (2,8);
\draw (2,7) rectangle (3,8);
\draw (3,7) rectangle (4,8);
\draw (0,6) rectangle (1,7);
\draw (1,6) rectangle (2,7);
\draw (2,6) rectangle (3,7);
\draw (0,5) rectangle (1,6);
\draw (0,4) rectangle (1,5);
\draw (2,4) rectangle (3,5);
\draw (5,4) rectangle (6,5);
\draw (0,3) rectangle (1,4);
\draw (2,3) rectangle (3,4);
\draw (5,3) rectangle (6,4);
\draw (0,2) rectangle (1,3);

\filldraw (4.5,7.5) circle (.5ex);
\draw[line width = .2ex] (4.5,0) -- (4.5,7.5) -- (8,7.5);
\filldraw (3.5,6.5) circle (.5ex);
\draw[line width = .2ex] (3.5,0) -- (3.5,6.5) -- (8,6.5);
\filldraw (1.5,5.5) circle (.5ex);
\draw[line width = .2ex] (1.5,0) -- (1.5,5.5) -- (8,5.5);
\filldraw (6.5,4.5) circle (.5ex);
\draw[line width = .2ex] (6.5,0) -- (6.5,4.5) -- (8,4.5);
\filldraw (7.5,3.5) circle (.5ex);
\draw[line width = .2ex] (7.5,0) -- (7.5,3.5) -- (8,3.5);
\filldraw (2.5,2.5) circle (.5ex);
\draw[line width = .2ex] (2.5,0) -- (2.5,2.5) -- (8,2.5);
\filldraw (0.5,1.5) circle (.5ex);
\draw[line width = .2ex] (0.5,0) -- (0.5,1.5) -- (8,1.5);
\filldraw (5.5,0.5) circle (.5ex);
\draw[line width = .2ex] (5.5,0) -- (5.5,0.5) -- (8,0.5);
\end{tikzpicture}
\]
Also,
${\mathcal Ess}(w)=\{(1,4), (2,3),(5,3), (5,6), (6,1)\}$,
and the accessible box is at $(5,6)$. 
The \emph{Lehmer code} of $w\in S_{\infty}$, denoted ${\sf code}(w)$ is the vector $(c_1,c_2,\ldots,c_L)$ where $c_i$ equals the number of boxes in row $i$ of the Rothe
diagram of $w$. We will assume $L$ is minimum (i.e., ${\sf code}(w)$ does not have trailing zeros). By this convention, ${\sf code}(id)=()$.

 \emph{Fulton's criterion} \cite[Remark~9.17]{Fulton} states that 
$u$ is vexillary if and only if there does not exist two essential set boxes where one is strictly northwest of the other. Thus, using the above picture of $D(w)$ we can see that $w$ is not vexillary because of, e.g., $(1,4)$ and $(5,6)$.

\subsection{Description of ${\mathcal T}(w)$} \label{sec:2.2}
The original description of transition was given in \cite{LS:transition}; this account is also given an exposition in \cite[Sections~2.7.3, 2.7.4, 2.8.1]{Manivel}. We follow the graphical description given in \cite{Knutson.Yong}
and its elaboration in \cite{AdveRobichauxYong}.
There are some minor choices in describing the transition tree, and those
of \cite{Knutson.Yong, AdveRobichauxYong} differ slightly from \cite{LS:transition,Manivel}. 

We describe the graphical version of the transition algorithm to compute $\#{\sf Red}(w)$. The root of the tree
is labelled by $D(w)$. If $w$ is vexillary, stop. Otherwise, there exists an accessible box. (If not, $D(w)$ consists only of the dominant component and, by Fulton's criterion, $w$ is vexillary, a contradiction.) The \emph{pivots} of $D(w)$ are the maximally southeast
$\bullet$'s of $G(w)$, say $b_1,b_2,\ldots,b_t$ that are 
northwest of the accessible box $e$. 

If $w$ is not vexillary, the children of $w$ are defined
as follows. For each $i=1,2,\ldots,t$, let $R_i$ be the rectangle defined by 
$b_i$ and $e$. Remove $b_i$ and its rays from $G(w)$ to form $G^{(i)}(w)$. Order the boxes $\{v_i\}_{i=1}^{m}$ in English reading order. Move
$v_1$ strictly north and strictly west to the closest position not occupied by another box of $D(w)$ or a ray from $G^{(i)}(w)$. Now, iterate this 
procedure with $v_2,v_3,\ldots$. At each step, $v_j$ may move to a position vacated by earlier moves. The result is the diagram $D(w^{(i)})$ of some permutation $w^{(i)}$.
These $D(w^{(i)})$'s are the children of $D(w)$. We call the transformation $D(w)\to D(w^{(i)})$ a \emph{marching move}.

\begin{example}  Continuing our example, the pivots of $w$ are $(1,5), (2,4)$ and $(3,2)$.  We now obtain the child
corresponding to the pivot $b_2=(2,4)$:
\begin{center}
\begin{tikzpicture}
\node (root) {\begin{tikzpicture}[scale=.4]
\draw (0,0) rectangle (8,8);

\draw (0,7) rectangle (1,8);
\draw (1,7) rectangle (2,8);
\draw (2,7) rectangle (3,8);
\draw (3,7) rectangle (4,8);
\draw (0,6) rectangle (1,7);
\draw (1,6) rectangle (2,7);
\draw (2,6) rectangle (3,7);
\draw (0,5) rectangle (1,6);
\draw (0,4) rectangle (1,5);
\draw (2,4) rectangle (3,5);
\draw (5,4) rectangle (6,5);
\draw (0,3) rectangle (1,4);
\draw (2,3) rectangle (3,4);
\draw (5,3) rectangle (6,4);
\draw (0,2) rectangle (1,3);

\node at (4,-.75) {$w$};

\filldraw (4.5,7.5) circle (.5ex);
\draw[line width = .2ex] (4.5,0) -- (4.5,7.5) -- (8,7.5);
\filldraw (3.5,6.5) circle (.5ex);
\draw[line width = .2ex] (3.5,0) -- (3.5,6.5) -- (8,6.5);
\filldraw (1.5,5.5) circle (.5ex);
\draw[line width = .2ex] (1.5,0) -- (1.5,5.5) -- (8,5.5);
\filldraw (6.5,4.5) circle (.5ex);
\draw[line width = .2ex] (6.5,0) -- (6.5,4.5) -- (8,4.5);
\filldraw (7.5,3.5) circle (.5ex);
\draw[line width = .2ex] (7.5,0) -- (7.5,3.5) -- (8,3.5);
\filldraw (2.5,2.5) circle (.5ex);
\draw[line width = .2ex] (2.5,0) -- (2.5,2.5) -- (8,2.5);
\filldraw (0.5,1.5) circle (.5ex);
\draw[line width = .2ex] (0.5,0) -- (0.5,1.5) -- (8,1.5);
\filldraw (5.5,0.5) circle (.5ex);
\draw[line width = .2ex] (5.5,0) -- (5.5,0.5) -- (8,0.5);
\end{tikzpicture}};

\node[right=1 of root] (temp) {\begin{tikzpicture}[scale=.4]
\draw (0,0) rectangle (8,8);

\draw (0,7) rectangle (1,8);
\draw (1,7) rectangle (2,8);
\draw (2,7) rectangle (3,8);
\draw (3,7) rectangle (4,8);
\draw (0,6) rectangle (1,7);
\draw (1,6) rectangle (2,7);
\draw (2,6) rectangle (3,7);
\draw (0,5) rectangle (1,6);
\draw (0,4) rectangle (1,5);
\draw (2,4) rectangle (3,5);
\draw (5,4) rectangle (6,5);
\draw (0,3) rectangle (1,4);
\draw (2,3) rectangle (3,4);
\draw (5,3) rectangle (6,4);
\draw (0,2) rectangle (1,3);

\node at (-1.2,-.75) {remove hook at $(2,4)$};

\filldraw (4.5,7.5) circle (.5ex);
\draw[line width = .2ex] (4.5,0) -- (4.5,7.5) -- (8,7.5);
\filldraw (1.5,5.5) circle (.5ex);
\draw[line width = .2ex] (1.5,0) -- (1.5,5.5) -- (8,5.5);
\filldraw (6.5,4.5) circle (.5ex);
\draw[line width = .2ex] (6.5,0) -- (6.5,4.5) -- (8,4.5);
\filldraw (7.5,3.5) circle (.5ex);
\draw[line width = .2ex] (7.5,0) -- (7.5,3.5) -- (8,3.5);
\filldraw (2.5,2.5) circle (.5ex);
\draw[line width = .2ex] (2.5,0) -- (2.5,2.5) -- (8,2.5);
\filldraw (0.5,1.5) circle (.5ex);
\draw[line width = .2ex] (0.5,0) -- (0.5,1.5) -- (8,1.5);
\filldraw (5.5,0.5) circle (.5ex);
\draw[line width = .2ex] (5.5,0) -- (5.5,0.5) -- (8,0.5);
\end{tikzpicture}};

\node[right=1 of temp] (ans) {
\begin{tikzpicture}[scale=.4]
\draw (0,0) rectangle (8,8);

\draw (0,7) rectangle (1,8);
\draw (1,7) rectangle (2,8);
\draw (2,7) rectangle (3,8);
\draw (3,7) rectangle (4,8);
\draw (0,6) rectangle (1,7);
\draw (1,6) rectangle (2,7);
\draw (2,6) rectangle (3,7);
\draw (3,6) rectangle (4,7);
\draw (0,5) rectangle (1,6);
\draw (0,4) rectangle (1,5);
\draw (2,4) rectangle (3,5);
\draw (3,4) rectangle (4,5);
\draw (0,3) rectangle (1,4);
\draw (2,3) rectangle (3,4);
\draw (0,2) rectangle (1,3);

\node at (2.8,6.5) {X};
\node at (2.8,4.5) {X};
\node at (0,-.75) {$w^{(2)}=56274318$};

\filldraw (4.5,7.5) circle (.5ex);
\draw[line width = .2ex] (4.5,0) -- (4.5,7.5) -- (8,7.5);
\filldraw (5.5,6.5) circle (.5ex);
\draw[line width = .2ex] (5.5,0) -- (5.5,6.5) -- (8,6.5);
\filldraw (1.5,5.5) circle (.5ex);
\draw[line width = .2ex] (1.5,0) -- (1.5,5.5) -- (8,5.5);
\filldraw (6.5,4.5) circle (.5ex);
\draw[line width = .2ex] (6.5,0) -- (6.5,4.5) -- (8,4.5);
\filldraw (3.5,3.5) circle (.5ex);
\draw[line width = .2ex] (3.5,0) -- (3.5,3.5) -- (8,3.5);
\filldraw (2.5,2.5) circle (.5ex);
\draw[line width = .2ex] (2.5,0) -- (2.5,2.5) -- (8,2.5);
\filldraw (0.5,1.5) circle (.5ex);
\draw[line width = .2ex] (0.5,0) -- (0.5,1.5) -- (8,1.5);
\filldraw (7.5,0.5) circle (.5ex);
\draw[line width = .2ex] (7.5,0) -- (7.5,0.5) -- (8,0.5);
\end{tikzpicture}};

\draw [->,
line join=round,
decorate, decoration={
    zigzag,
    segment length=6,
    amplitude=2,post=lineto,
    post length=2pt
}] (root) -- (temp);
\path (temp) edge[->] node[midway,above] {\ } (ans);

\end{tikzpicture}
\end{center}
We have indicated by ``X'' the boxes that have moved. This process constructs one of the three children of $w$. In Figure~\ref{fig:transTree} we draw the remainder of ${\mathcal T}(w)$.\qed
\end{example}

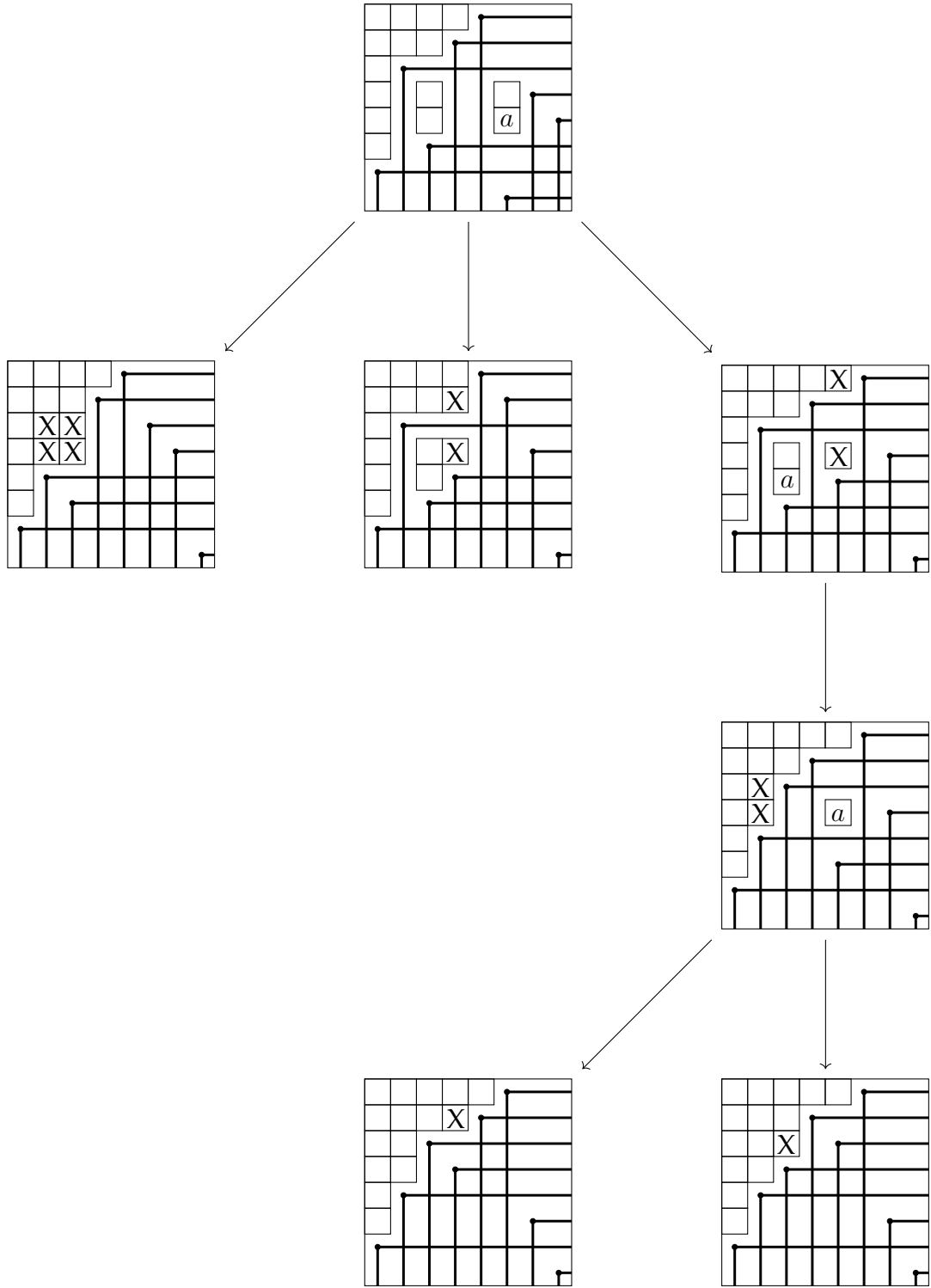
\begin{figure}
\begin{tikzpicture}

\node (root) {\begin{tikzpicture}[scale=.4]
\draw (0,0) rectangle (8,8);

\draw (0,7) rectangle (1,8);
\draw (1,7) rectangle (2,8);
\draw (2,7) rectangle (3,8);
\draw (3,7) rectangle (4,8);
\draw (0,6) rectangle (1,7);
\draw (1,6) rectangle (2,7);
\draw (2,6) rectangle (3,7);
\draw (0,5) rectangle (1,6);
\draw (0,4) rectangle (1,5);
\draw (2,4) rectangle (3,5);
\draw (5,4) rectangle (6,5);
\draw (0,3) rectangle (1,4);
\draw (2,3) rectangle (3,4);
\draw (5,3) rectangle (6,4) node[pos=.5] {$a$};
\draw (0,2) rectangle (1,3);

\filldraw (4.5,7.5) circle (.5ex);
\draw[line width = .2ex] (4.5,0) -- (4.5,7.5) -- (8,7.5);
\filldraw (3.5,6.5) circle (.5ex);
\draw[line width = .2ex] (3.5,0) -- (3.5,6.5) -- (8,6.5);
\filldraw (1.5,5.5) circle (.5ex);
\draw[line width = .2ex] (1.5,0) -- (1.5,5.5) -- (8,5.5);
\filldraw (6.5,4.5) circle (.5ex);
\draw[line width = .2ex] (6.5,0) -- (6.5,4.5) -- (8,4.5);
\filldraw (7.5,3.5) circle (.5ex);
\draw[line width = .2ex] (7.5,0) -- (7.5,3.5) -- (8,3.5);
\filldraw (2.5,2.5) circle (.5ex);
\draw[line width = .2ex] (2.5,0) -- (2.5,2.5) -- (8,2.5);
\filldraw (0.5,1.5) circle (.5ex);
\draw[line width = .2ex] (0.5,0) -- (0.5,1.5) -- (8,1.5);
\filldraw (5.5,0.5) circle (.5ex);
\draw[line width = .2ex] (5.5,0) -- (5.5,0.5) -- (8,0.5);
\end{tikzpicture}};
\node[below left = 2 and 2 of root] (c1) {\begin{tikzpicture}[scale=.4]
\draw (0,0) rectangle (8,8);

\draw (0,7) rectangle (1,8);
\draw (1,7) rectangle (2,8);
\draw (2,7) rectangle (3,8);
\draw (3,7) rectangle (4,8);
\draw (0,6) rectangle (1,7);
\draw (1,6) rectangle (2,7);
\draw (2,6) rectangle (3,7);
\draw (0,5) rectangle (1,6);
\draw (1,5) rectangle (2,6) node[pos=1.25] {X};
\draw (2,5) rectangle (3,6) node[pos=1.25] {X};
\draw (0,4) rectangle (1,5);
\draw (1,4) rectangle (2,5) node[pos=1.25] {X};
\draw (2,4) rectangle (3,5) node[pos=1.25] {X};
\draw (0,3) rectangle (1,4);
\draw (0,2) rectangle (1,3);

\filldraw (4.5,7.5) circle (.5ex);
\draw[line width = .2ex] (4.5,0) -- (4.5,7.5) -- (8,7.5);
\filldraw (3.5,6.5) circle (.5ex);
\draw[line width = .2ex] (3.5,0) -- (3.5,6.5) -- (8,6.5);
\filldraw (5.5,5.5) circle (.5ex);
\draw[line width = .2ex] (5.5,0) -- (5.5,5.5) -- (8,5.5);
\filldraw (6.5,4.5) circle (.5ex);
\draw[line width = .2ex] (6.5,0) -- (6.5,4.5) -- (8,4.5);
\filldraw (1.5,3.5) circle (.5ex);
\draw[line width = .2ex] (1.5,0) -- (1.5,3.5) -- (8,3.5);
\filldraw (2.5,2.5) circle (.5ex);
\draw[line width = .2ex] (2.5,0) -- (2.5,2.5) -- (8,2.5);
\filldraw (0.5,1.5) circle (.5ex);
\draw[line width = .2ex] (0.5,0) -- (0.5,1.5) -- (8,1.5);
\filldraw (7.5,0.5) circle (.5ex);
\draw[line width = .2ex] (7.5,0) -- (7.5,0.5) -- (8,0.5);
\end{tikzpicture}
};
\node[below = 2 of root] (c2) {\begin{tikzpicture}[scale=.4]
\draw (0,0) rectangle (8,8);

\draw (0,7) rectangle (1,8);
\draw (1,7) rectangle (2,8);
\draw (2,7) rectangle (3,8);
\draw (3,7) rectangle (4,8);
\draw (0,6) rectangle (1,7);
\draw (1,6) rectangle (2,7);
\draw (2,6) rectangle (3,7);
\draw (3,6) rectangle (4,7);
\draw (0,5) rectangle (1,6);
\draw (0,4) rectangle (1,5);
\draw (2,4) rectangle (3,5);
\draw (3,4) rectangle (4,5);
\draw (0,3) rectangle (1,4);
\draw (2,3) rectangle (3,4);
\draw (0,2) rectangle (1,3);

\node at (3.5,7.2) {X};
\node at (3.5,5.2) {X};

\filldraw (4.5,7.5) circle (.5ex);
\draw[line width = .2ex] (4.5,0) -- (4.5,7.5) -- (8,7.5);
\filldraw (5.5,6.5) circle (.5ex);
\draw[line width = .2ex] (5.5,0) -- (5.5,6.5) -- (8,6.5);
\filldraw (1.5,5.5) circle (.5ex);
\draw[line width = .2ex] (1.5,0) -- (1.5,5.5) -- (8,5.5);
\filldraw (6.5,4.5) circle (.5ex);
\draw[line width = .2ex] (6.5,0) -- (6.5,4.5) -- (8,4.5);
\filldraw (3.5,3.5) circle (.5ex);
\draw[line width = .2ex] (3.5,0) -- (3.5,3.5) -- (8,3.5);
\filldraw (2.5,2.5) circle (.5ex);
\draw[line width = .2ex] (2.5,0) -- (2.5,2.5) -- (8,2.5);
\filldraw (0.5,1.5) circle (.5ex);
\draw[line width = .2ex] (0.5,0) -- (0.5,1.5) -- (8,1.5);
\filldraw (7.5,0.5) circle (.5ex);
\draw[line width = .2ex] (7.5,0) -- (7.5,0.5) -- (8,0.5);
\end{tikzpicture}};
\node[below right = 2 and 2 of root] (c3) {\begin{tikzpicture}[scale=.4]
\draw (0,0) rectangle (8,8);

\draw (0,7) rectangle (1,8);
\draw (1,7) rectangle (2,8);
\draw (2,7) rectangle (3,8);
\draw (3,7) rectangle (4,8);
\draw (4,7) rectangle (5,8);
\draw (0,6) rectangle (1,7);
\draw (1,6) rectangle (2,7);
\draw (2,6) rectangle (3,7);
\draw (0,5) rectangle (1,6);
\draw (0,4) rectangle (1,5);
\draw (2,4) rectangle (3,5);
\draw (4,4) rectangle (5,5);
\draw (0,3) rectangle (1,4);
\draw (2,3) rectangle (3,4);
\draw (0,2) rectangle (1,3);

\node at (3.8,8.2) {X};
\node at (3.8,5.2) {X};
\node at (1.9,4.1) {$a$};

\filldraw (5.5,7.5) circle (.5ex);
\draw[line width = .2ex] (5.5,0) -- (5.5,7.5) -- (8,7.5);
\filldraw (3.5,6.5) circle (.5ex);
\draw[line width = .2ex] (3.5,0) -- (3.5,6.5) -- (8,6.5);
\filldraw (1.5,5.5) circle (.5ex);
\draw[line width = .2ex] (1.5,0) -- (1.5,5.5) -- (8,5.5);
\filldraw (6.5,4.5) circle (.5ex);
\draw[line width = .2ex] (6.5,0) -- (6.5,4.5) -- (8,4.5);
\filldraw (4.5,3.5) circle (.5ex);
\draw[line width = .2ex] (4.5,0) -- (4.5,3.5) -- (8,3.5);
\filldraw (2.5,2.5) circle (.5ex);
\draw[line width = .2ex] (2.5,0) -- (2.5,2.5) -- (8,2.5);
\filldraw (0.5,1.5) circle (.5ex);
\draw[line width = .2ex] (0.5,0) -- (0.5,1.5) -- (8,1.5);
\filldraw (7.5,0.5) circle (.5ex);
\draw[line width = .2ex] (7.5,0) -- (7.5,0.5) -- (8,0.5);
\end{tikzpicture}};
\node[below= 2  of c3] (c4) {\begin{tikzpicture}[scale=.4]
\draw (0,0) rectangle (8,8);

\draw (0,7) rectangle (1,8);
\draw (1,7) rectangle (2,8);
\draw (2,7) rectangle (3,8);
\draw (3,7) rectangle (4,8);
\draw (4,7) rectangle (5,8);
\draw (0,6) rectangle (1,7);
\draw (1,6) rectangle (2,7);
\draw (2,6) rectangle (3,7);
\draw (0,5) rectangle (1,6);
\draw (1,5) rectangle (2,6);
\draw (0,4) rectangle (1,5);
\draw (1,4) rectangle (2,5);
\draw (4,4) rectangle (5,5);
\draw (0,3) rectangle (1,4);
\draw (0,2) rectangle (1,3);

\node at (1.5,6.2) {X};
\node at (1.5,5.2) {X};
\node at (4.5,5) {$a$};

\filldraw (5.5,7.5) circle (.5ex);
\draw[line width = .2ex] (5.5,0) -- (5.5,7.5) -- (8,7.5);
\filldraw (3.5,6.5) circle (.5ex);
\draw[line width = .2ex] (3.5,0) -- (3.5,6.5) -- (8,6.5);
\filldraw (2.5,5.5) circle (.5ex);
\draw[line width = .2ex] (2.5,0) -- (2.5,5.5) -- (8,5.5);
\filldraw (6.5,4.5) circle (.5ex);
\draw[line width = .2ex] (6.5,0) -- (6.5,4.5) -- (8,4.5);
\filldraw (1.5,3.5) circle (.5ex);
\draw[line width = .2ex] (1.5,0) -- (1.5,3.5) -- (8,3.5);
\filldraw (4.5,2.5) circle (.5ex);
\draw[line width = .2ex] (4.5,0) -- (4.5,2.5) -- (8,2.5);
\filldraw (0.5,1.5) circle (.5ex);
\draw[line width = .2ex] (0.5,0) -- (0.5,1.5) -- (8,1.5);
\filldraw (7.5,0.5) circle (.5ex);
\draw[line width = .2ex] (7.5,0) -- (7.5,0.5) -- (8,0.5);
\end{tikzpicture}};
\node[below = 2  of c4] (c5) {\begin{tikzpicture}[scale=.4]
\draw (0,0) rectangle (8,8);

\draw (0,7) rectangle (1,8);
\draw (1,7) rectangle (2,8);
\draw (2,7) rectangle (3,8);
\draw (3,7) rectangle (4,8);
\draw (4,7) rectangle (5,8);
\draw (0,6) rectangle (1,7);
\draw (1,6) rectangle (2,7);
\draw (2,6) rectangle (3,7);
\draw (0,5) rectangle (1,6);
\draw (1,5) rectangle (2,6);
\draw (2,5) rectangle (3,6);
\draw (0,4) rectangle (1,5);
\draw (1,4) rectangle (2,5);
\draw (0,3) rectangle (1,4);
\draw (0,2) rectangle (1,3);

\node at (2.5,6.2) {X};

\filldraw (5.5,7.5) circle (.5ex);
\draw[line width = .2ex] (5.5,0) -- (5.5,7.5) -- (8,7.5);
\filldraw (3.5,6.5) circle (.5ex);
\draw[line width = .2ex] (3.5,0) -- (3.5,6.5) -- (8,6.5);
\filldraw (4.5,5.5) circle (.5ex);
\draw[line width = .2ex] (4.5,0) -- (4.5,5.5) -- (8,5.5);
\filldraw (2.5,4.5) circle (.5ex);
\draw[line width = .2ex] (2.5,0) -- (2.5,4.5) -- (8,4.5);
\filldraw (1.5,3.5) circle (.5ex);
\draw[line width = .2ex] (1.5,0) -- (1.5,3.5) -- (8,3.5);
\filldraw (6.5,2.5) circle (.5ex);
\draw[line width = .2ex] (6.5,0) -- (6.5,2.5) -- (8,2.5);
\filldraw (0.5,1.5) circle (.5ex);
\draw[line width = .2ex] (0.5,0) -- (0.5,1.5) -- (8,1.5);
\filldraw (7.5,0.5) circle (.5ex);
\draw[line width = .2ex] (7.5,0) -- (7.5,0.5) -- (8,0.5);
\end{tikzpicture}};
\node[below left = 2 and 2 of c4] (c6) {\begin{tikzpicture}[scale=.4]
\draw (0,0) rectangle (8,8);

\draw (0,7) rectangle (1,8);
\draw (1,7) rectangle (2,8);
\draw (2,7) rectangle (3,8);
\draw (3,7) rectangle (4,8);
\draw (4,7) rectangle (5,8);
\draw (0,6) rectangle (1,7);
\draw (1,6) rectangle (2,7);
\draw (2,6) rectangle (3,7);
\draw (3,6) rectangle (4,7);
\draw (0,5) rectangle (1,6);
\draw (1,5) rectangle (2,6);
\draw (0,4) rectangle (1,5);
\draw (1,4) rectangle (2,5);
\draw (0,3) rectangle (1,4);
\draw (0,2) rectangle (1,3);

\node at (4.25,7.2) {X};

\filldraw (5.5,7.5) circle (.5ex);
\draw[line width = .2ex] (5.5,0) -- (5.5,7.5) -- (8,7.5);
\filldraw (4.5,6.5) circle (.5ex);
\draw[line width = .2ex] (4.5,0) -- (4.5,6.5) -- (8,6.5);
\filldraw (2.5,5.5) circle (.5ex);
\draw[line width = .2ex] (2.5,0) -- (2.5,5.5) -- (8,5.5);
\filldraw (3.5,4.5) circle (.5ex);
\draw[line width = .2ex] (3.5,0) -- (3.5,4.5) -- (8,4.5);
\filldraw (1.5,3.5) circle (.5ex);
\draw[line width = .2ex] (1.5,0) -- (1.5,3.5) -- (8,3.5);
\filldraw (6.5,2.5) circle (.5ex);
\draw[line width = .2ex] (6.5,0) -- (6.5,2.5) -- (8,2.5);
\filldraw (0.5,1.5) circle (.5ex);
\draw[line width = .2ex] (0.5,0) -- (0.5,1.5) -- (8,1.5);
\filldraw (7.5,0.5) circle (.5ex);
\draw[line width = .2ex] (7.5,0) -- (7.5,0.5) -- (8,0.5);
\end{tikzpicture}};
  \begin{scope}[nodes = {draw = none}]
    \path (root) edge[->] (c1)
    (root) edge[->] (c2)
    (root) edge[->] (c3)
    (c3) edge[->] (c4)
    (c4) edge[->] (c5)
    (c4) edge[->] (c6)
      ;
  \end{scope}
\end{tikzpicture}
\caption{$\mathcal{T}(w)$ for $w = 54278316$. The $a$ indicates the accessible box of each node. The $X$'s describe which boxes of the parent moved. From this tree, we compute $\#{\sf Red}(w)=730158$.}
\label{fig:transTree}
\end{figure}

If $u$ is vexillary we define $\lambda(u)$ graphically by pushing all boxes of $D(u)$ northwest along the diagonal that it sits until a partition shape is reached; see \cite[Section~3.2]{KMY}.
Concluding our running example, from Figure~\ref{fig:transTree} we have
\begin{align*}
\#{\sf Red}(54278316) & = f^{\lambda(54672318)}+f^{\lambda(56274318)}
+f^{\lambda(65342718)}+f^{\lambda(64532718)}\\
\ & =f^{4,3,3,3,1,1}+f^{4,4,3,2,1,1}+f^{5,4,2,2,1,1}+f^{5,3,3,2,1,1}\\
\ & = 730158.
\end{align*}

This result is a mild variation of \cite[Proposition~2.8.1]{Manivel} (cf.~\cite{LS:transition}) using the marching moves. We make no claim of originality.

\begin{theorem}[cf.~\cite{LS:transition,Manivel, Knutson.Yong}]
\label{thm:noorig}
$\#{\sf Red}(w)=\sum_{v\in {\mathcal L}(w)}f^{\lambda(v)}$.
\end{theorem}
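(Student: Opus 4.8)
The plan is to prove the identity by induction along the transition tree ${\mathcal T}(w)$, with the vexillary evaluation (\ref{eqn:whenvex}) as the base case and the Lascoux--Schützenberger transition recurrence for Stanley's symmetric functions as the inductive engine. Concretely, I would show that (a) if $w$ is vexillary the statement is immediate, (b) if $w$ is not vexillary then $\#{\sf Red}(w)=\sum_i\#{\sf Red}(w^{(i)})$ where the $w^{(i)}$ are the marching-move children of $w$, and (c) ${\mathcal T}(w)$ is finite with all leaves vexillary, so that the recursion is well-founded.

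For (a): when $w$ is vexillary the algorithm halts at the root, so ${\mathcal T}(w)$ is a single node and ${\mathcal L}(w)=\{w\}$ counted once; the shape $\lambda(w)$ produced by sliding $D(w)$ northwest along diagonals is the vexillary shape of \cite[Corollary~2.8.2]{Manivel}, whence $\#{\sf Red}(w)=f^{\lambda(w)}=\sum_{v\in{\mathcal L}(w)}f^{\lambda(v)}$ by (\ref{eqn:whenvex}). For (b): the key input is that, for non-vexillary $w$ with accessible box $e$, pivots $b_1,\dots,b_t$, and corresponding children $w^{(1)},\dots,w^{(t)}$, Stanley's symmetric functions satisfy
\[
F_w \;=\; \sum_{i=1}^{t} F_{w^{(i)}}, \qquad \text{with } \ell(w^{(i)})=\ell(w) \text{ for each } i .
\]
This is the Stanley-symmetric-function form of Lascoux--Schützenberger transition (\cite{LS:transition}; \cite[Sections~2.7--2.8]{Manivel}); that the marching-move children are precisely the permutations on the right-hand side is the graphical reformulation of \cite{Knutson.Yong} (and \cite{AdveRobichauxYong}), which I would invoke rather than reprove. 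Extracting the coefficient of $x_1x_2\cdots x_{\ell(w)}$ from this identity and applying (\ref{eqn:firstformula}) to $w$ and to each $w^{(i)}$ — legitimate precisely because $\ell(w^{(i)})=\ell(w)$ — gives $\#{\sf Red}(w)=\sum_{i=1}^{t}\#{\sf Red}(w^{(i)})$. Since the child subtrees of ${\mathcal T}(w)$ are exactly ${\mathcal T}(w^{(1)}),\dots,{\mathcal T}(w^{(t)})$, we have ${\mathcal L}(w)=\bigsqcup_{i=1}^{t}{\mathcal L}(w^{(i)})$ as multisets, and the inductive hypothesis applied to each $w^{(i)}$ closes the step.

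The step I expect to demand the most care is (c), finiteness of ${\mathcal T}(w)$ — without it the induction has nothing to recurse on. I would handle it with a bounded, strictly monotone monovariant: a marching move through a pivot in row $r$ leaves the one-line notation of $w$ unchanged in rows $1,\dots,r-1$ and strictly increases its value in row $r$, so each $w^{(i)}$ strictly exceeds $w$ in lexicographic order on one-line notations; since marching moves never push a box (or dot) out of the ambient $n\times n$ grid, every descendant of $w\in S_n$ stays in $S_n$, a finite linearly ordered set, and a node with no accessible box is vexillary by Fulton's criterion. Hence every root-to-leaf path terminates at a vexillary permutation and ${\mathcal T}(w)$ is finite. (This finiteness is classical — \cite[Proposition~2.8.1]{Manivel}, \cite{LS:transition} — which is why no originality is claimed; the monovariant above is merely the most economical self-contained justification. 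A secondary routine point to confirm is that each marching move indeed returns the Rothe diagram of a genuine permutation, as asserted in the description of ${\mathcal T}(w)$ in Section~\ref{sec:2.2}; this is again in \cite{Knutson.Yong, AdveRobichauxYong}.)
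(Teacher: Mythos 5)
Your proposal is correct in substance, but it reaches (\ref{eqn:jun15cde}) by a genuinely different route than the paper, and it is worth spelling out the trade-off. The paper never cites a Stanley-symmetric-function-level transition identity: it cites only the Schubert-polynomial recurrence (\ref{eqn:trans}), ${\mathfrak S}_w=x_r{\mathfrak S}_{w'}+\sum_{w''}{\mathfrak S}_{w''}$, which carries the extra term $x_r{\mathfrak S}_{w'}$, and the actual work of its proof is the stabilization trick — apply (\ref{eqn:trans}) to $1^n\times w$, note the accessible rows there exceed $n$, and set $x_i=0$ for $i>n$ so that all such terms die — which yields $F_w=\sum_{v\in{\mathcal L}(w)}F_v$ for exactly the marching-move children of Section~\ref{sec:2.2}. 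Your step (b) instead invokes $F_w=\sum_i F_{w^{(i)}}$ wholesale as ``the Stanley-symmetric-function form of transition.'' That identity is true (and classical for the Lascoux--Sch\"utzenberger choice of transition position), but the paper explicitly warns that its accessible-box/marching-move conventions differ slightly from \cite{LS:transition,Manivel}; bridging that difference is precisely what its stable-limit argument does, so a referee could object that your citation outsources the one step that needs an argument. Since Theorem~\ref{thm:noorig} is itself a ``cf.''~statement with no originality claimed, this is a defensible level of rigor, but a self-contained version of your induction should derive the one-step $F$-identity from (\ref{eqn:trans}) by the same $1^n\times w$ device. On the other side of the ledger, your proof buys two things the paper leaves implicit: a clean tree induction (base case (\ref{eqn:whenvex}), inductive step via coefficient extraction of $x_1\cdots x_{\ell(w)}$ using $\ell(w^{(i)})=\ell(w)$ and (\ref{eqn:firstformula})), and an explicit well-foundedness argument — the lexicographic monovariant on one-line notations, with children staying in $S_n$ and accessible-box-free nodes being vexillary — whereas the paper's ``repeated application of (\ref{eqn:trans})'' tacitly assumes the tree is finite. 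Both proofs finish identically at the leaves, via $F_v=s_{\lambda(v)}$, equivalently (\ref{eqn:whenvex}).
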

\noindent
\emph{Proof:}  We follow \cite[Section~5.2]{AdveRobichauxYong}, which elaborates
on the notions from \cite{Knutson.Yong} in the case of Schubert polynomials
${\mathfrak S}_w$. We refer to \cite[Chapter 2]{Manivel} for background.

Let $(r,c)$ be the accessible box of $w\in S_n$ and set $k=w^{-1}(c)$.
Also let $w'=w\cdot (r,k)$. Transition gives this recurrence for the \emph{Schubert polynomials}:
\begin{equation}
\label{eqn:trans}
{\mathfrak S}_w=x_r {\mathfrak S}_{w'} +\sum_{w''} {\mathfrak S}_{w''},
\end{equation}
where the summation is over the children $w''$ of $w$ in ${\mathcal T}(w)$.

Let $1^N\times w\in S_{N+n}$ send $j\mapsto j$ for $1\leq j\leq n$ and $j\mapsto w(j-N+1)+N$
for $j\geq N$. Then
\[F_w=\lim_{N\to \infty} {\mathfrak S}_{1^N\times w}\in {\mathbb Z}[[x_1,x_2,\ldots \ ]].
\footnote{In the conventions of \cite{Stanley1984}, the limit is an expression for $F_{w^{-1}}$.}\]
Moreover, since $w\in {S}_n$ then
\begin{equation}
\label{eqn:Dec17abc}
F_w(x_1,x_2,\ldots, x_{n},0,0,\ldots)={\mathfrak S}_{1^{n}\times w}(x_1,x_2,\ldots,x_{n},0,0,\ldots).
\end{equation}
Now, by repeated application of (\ref{eqn:trans}) to $1^n\times w$,
\begin{equation}
\label{eqn:jun15abc}
{\mathfrak S}_{1^n\times w}=J(x_1,x_2,\ldots,x_{2n})+\sum_{v\in {\mathcal L}(w)}{\mathfrak S}_{1^n\times v},
\end{equation}
where $J(x_1,x_2,\ldots,x_n,0,0,\ldots)\equiv 0$. 

Hence by setting $x_i=0$ for $i>n$ in (\ref{eqn:jun15abc}) we obtain, using (\ref{eqn:Dec17abc}) that
\begin{equation}
\label{eqn:jun15cde}
F_w(x_1,\ldots,x_n)=\sum_{v\in {\mathcal L}(w)} F_v(x_1,\ldots,x_n).
\end{equation}
Let $s_{\alpha}(x_1,\ldots,x_n)$ be the Schur polynomial for a shape $\alpha$. Since $v\in {\mathcal L}(w)$ is vexillary, 
\[F_v(x_1,\ldots,x_n)=s_{\lambda(v)}(x_1,\ldots,x_n);\]
see, e.g., \cite[Section~2.8.1]{Manivel}. Hence
\begin{equation}
\label{eqn:jun15xyz}
F_w(x_1,\ldots,x_n)=\sum_{v\in {\mathcal L}(w)}s_{\lambda(v)}(x_1,\ldots,x_n).
\end{equation}
We have that
$[x_1 x_2\cdots x_{\ell(w)}]F_w=\#{\sf Red}(w)$
and
$[x_1 x_2\cdots x_{\ell(w)}]s_{\lambda(v)}(x_1,\ldots,x_n)=f^{\lambda(v)}$.
Now the result follows from these two facts combined with (\ref{eqn:jun15xyz}).\qed

\section{Proof of Theorems~\ref{thm:averagemain} and~\ref{thm:exponentialintro}} 
\subsection{On the distribution of ${\sf EG}(w)$}

\begin{lemma}
\label{lemma:basis}
For any $w\in {S}_n$, 
${\sf EG}(w)=\#{\mathcal L}(w)$.
\end{lemma}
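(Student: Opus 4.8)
The plan is to compare two expressions for $\#{\sf Red}(w)$: the Edelman--Greene expansion \eqref{eqn:stanleyformula}, $\#{\sf Red}(w)=\sum_\lambda a_{w,\lambda} f^\lambda$, and the transition expression \eqref{eqn:transitionexp}, $\#{\sf Red}(w)=\sum_{v\in{\mathcal L}(w)} f^{\lambda(v)}$ (Theorem~\ref{thm:noorig}). Since ${\sf EG}(w)=\sum_\lambda a_{w,\lambda}$, it suffices to show that after grouping the leaves of ${\mathcal T}(w)$ by the partition $\lambda(v)$ they carry, the number of leaves $v$ with $\lambda(v)=\lambda$ is exactly $a_{w,\lambda}$; summing over $\lambda$ then gives $\#{\mathcal L}(w)=\sum_\lambda a_{w,\lambda}={\sf EG}(w)$. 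So the real content is an identity of \emph{coefficients}, not just of the weighted sums.

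The cleanest route is to lift everything to the level of symmetric functions, exactly as in the proof of Theorem~\ref{thm:noorig}. From \eqref{eqn:jun15xyz} we have $F_w(x_1,\ldots,x_n)=\sum_{v\in{\mathcal L}(w)} s_{\lambda(v)}(x_1,\ldots,x_n)$, and since each $\lambda(v)$ has at most $\ell(w)\le n$ rows (the leaves are vexillary permutations arising from $w\in S_n$, so their shapes fit), this is a genuine identity in the ring of symmetric functions, i.e. $F_w=\sum_{v\in{\mathcal L}(w)} s_{\lambda(v)}$. On the other hand, the Edelman--Greene theorem \cite{Edelman.Greene} (which underlies \eqref{eqn:stanleyformula}) states precisely that $F_w=\sum_\lambda a_{w,\lambda} s_\lambda$, where $a_{w,\lambda}$ counts EG tableaux. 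Because the Schur functions $\{s_\lambda\}$ are a $\mathbb Z$-basis of the symmetric function ring, comparing coefficients gives $a_{w,\lambda}=\#\{v\in{\mathcal L}(w): \lambda(v)=\lambda\}$ for every $\lambda$. Summing over $\lambda$ yields ${\sf EG}(w)=\sum_\lambda a_{w,\lambda}=\#{\mathcal L}(w)$.

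I would structure the write-up as follows. First, recall that $F_w=\sum_\lambda a_{w,\lambda}s_\lambda$ in the symmetric function ring; this is the Schur-positive expansion of Stanley's symmetric function given by Edelman--Greene, of which \eqref{eqn:stanleyformula} is the ``coefficient of $x_1\cdots x_{\ell(w)}$'' shadow via $[x_1\cdots x_{\ell(w)}]s_\lambda=f^\lambda$. Second, invoke \eqref{eqn:jun15xyz} from the proof of Theorem~\ref{thm:noorig}, and upgrade it from an identity of polynomials in $x_1,\ldots,x_n$ to an identity of symmetric functions, using that all shapes involved have $\le n$ parts so that no information is lost when restricting to $n$ variables. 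Third, equate the two Schur expansions of $F_w$ and read off $a_{w,\lambda}=\#\{v\in{\mathcal L}(w):\lambda(v)=\lambda\}$ by linear independence of Schur functions. Fourth, sum over $\lambda$.

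The main obstacle — really the only subtlety — is the passage from an identity of polynomials in finitely many variables to an identity of honest symmetric functions, i.e. making sure that restricting to $n$ variables does not collapse distinct Schur functions. This is handled by the bound $\ell(\lambda(v))\le\ell(w)\le n$ on the number of parts of each leaf shape (and similarly $\ell(\lambda)\le n$ whenever $a_{w,\lambda}\ne0$, since such $\lambda$ is the shape of an EG tableau whose column-reading word is a reduced word for $w\in S_n$, forcing at most $n-1$ distinct letters hence at most $n-1$ columns... one must instead bound rows: the number of rows of such $\lambda$ is at most $\ell(w)$), so that $\{s_\lambda(x_1,\ldots,x_n)\}$ restricted to the relevant $\lambda$'s remains linearly independent. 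Alternatively, and perhaps more robustly, I would simply cite the stable limit: $F_w=\lim_{N\to\infty}\mathfrak S_{1^N\times w}$ is genuinely a symmetric function, and the transition recurrence \eqref{eqn:trans} applied in the stable range gives $F_w=\sum_{v\in{\mathcal L}(w)}F_v=\sum_{v\in{\mathcal L}(w)}s_{\lambda(v)}$ directly in the symmetric function ring, sidestepping any variable-count bookkeeping. Everything else is a formal comparison of coefficients.
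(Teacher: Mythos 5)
Your proposal is correct and is essentially the paper's own argument: the paper likewise equates the Edelman--Greene Schur expansion of $F_w$ (its (\ref{eqn:Nov20abc})) with the transition expansion (\ref{eqn:jun15xyz}) and invokes linear independence of Schur polynomials (working in $\ell(w)$ variables, where all shapes of size $\ell(w)$ survive, which is the same bookkeeping you handle by bounding the number of rows or passing to the stable symmetric function ring) to match coefficients and then sum over $\lambda$.
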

\begin{proof}
Combining results of \cite{Stanley1984, Edelman.Greene} gives
\begin{equation}
\label{eqn:Nov20abc}
F_{w}(x_1,\ldots,x_{\ell(w)})=\sum_{\lambda} a_{w,\lambda}s_{\lambda}(x_1,\ldots,x_{\ell(w)})
\end{equation}
where the sum is over partitions $\lambda$ of size $\ell(w)$, and 
$a_{w,\lambda}$ is defined in Section~\ref{sec:1}.

The Schur polynomials $s_{\lambda}(x_1,\ldots,x_{\ell(w)})$ for $|\lambda|=\ell(w)$ are a basis of the vector space $\Lambda^{(\ell(w))}_{\mathbb Q}[x_1,\ldots,x_{\ell(w)}]$
of degree $\ell(w)$ symmetric polynomials in $\{x_1,\ldots,x_{\ell(w)}\}$. Since (\ref{eqn:Nov20abc}) and (\ref{eqn:jun15xyz}) (where $n=\ell(w)$) are
linear combinations for the same vector, we are done.
\end{proof}

In view of Lemma~\ref{lemma:basis}, Theorems~\ref{thm:averagemain} and~\ref{thm:exponentialintro} are equivalent. It is easy to see that
Theorem~\ref{thm:averagemain} follows from our main result:

\begin{theorem}
\label{thm:firstmain}
Fix $0<\gamma<\frac{1}{2}$. There exists $\alpha>0$ such that for $n$ sufficiently large,
\[{\mathbb P}({\sf EG}(w)\geq 2^{\alpha n})
\geq 1-\frac{1}{n^{2\gamma}}.\] 
\end{theorem}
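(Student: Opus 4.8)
The plan is to prove the concentration statement by combining a lower bound on $\#\mathcal{L}(w)$ in terms of the number of branch points along a root-to-leaf path of $\mathcal{T}(w)$ with a high-probability lower bound on how many such branch points there must be, obtained from the $2143$-pattern count of a random permutation. By Lemma~\ref{lemma:basis} we may work with $\#\mathcal{L}(w)$ rather than $\mathsf{EG}(w)$ directly. First I would record the structural fact, already flagged in the introduction via \cite{Knutson.Yong}, that a node $u$ of $\mathcal{T}(w)$ has exactly one child only when that child has weakly more $2143$-patterns than $u$; contrapositively, whenever a marching move strictly decreases the $2143$-count, the node has at least two children. Since each marching move decreases the $2143$-count by at most $O(n^3)$ (the number of patterns destroyed by removing one pivot hook and sliding boxes is crudely bounded by choosing the three remaining positions of the pattern), a root-to-leaf path in $\mathcal{T}(w)$ that starts from a permutation with $p$ many $2143$-patterns must pass through at least $\Omega(p/n^3)$ branch points: the count can only reach $0$ (vexillary leaf) after that many strict decreases, and every strict decrease is a branch point. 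A binary-branching tree with $b$ branch points on every root-to-leaf path has at least $2^{b}$ leaves (formally: induct on $b$, or observe that contracting degree-one edges yields a tree in which every internal node has $\ge 2$ children and depth $\ge b$), so $\#\mathcal{L}(w) \ge 2^{\Omega(p/n^3)}$ where $p = p(w)$ is the number of $2143$-patterns of $w$.

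Second, I would control $p(w)$ from below with high probability for uniformly random $w \in S_n$. The number of $2143$-occurrences is $X = \sum_{I} \mathbf{1}_I$ over the $\binom{n}{4}$ index sets $I = \{i_1<i_2<i_3<i_4\}$, where $\mathbf{1}_I$ indicates that $w$ restricted to $I$ has pattern $2143$; each indicator has mean $1/24$, so $\mathbb{E}[X] = \binom{n}{4}/24 = \Theta(n^4)$. For the lower tail I would use a second-moment / Chebyshev argument: $\mathrm{Var}(X) = \sum_{I,J}\big(\mathbb{P}(\mathbf{1}_I = \mathbf{1}_J = 1) - \tfrac{1}{576}\big)$, and the covariance terms vanish unless $|I\cap J|\ge 1$, of which there are $O(n^{7})$ pairs, so $\mathrm{Var}(X) = O(n^{7})$. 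Chebyshev then gives $\mathbb{P}(X \le \tfrac12 \mathbb{E}[X]) \le \mathrm{Var}(X)/(\tfrac12\mathbb{E}[X])^2 = O(n^{7})/\Theta(n^{8}) = O(1/n)$. This is already enough for a $1 - O(1/n)$ statement; to get the exponent $1 - n^{-2\gamma}$ with $\gamma < \tfrac12$ as stated, note $2\gamma < 1$, so the crude $O(1/n)$ bound dominates $n^{-2\gamma}$ and suffices for all large $n$ — I would simply remark that $O(1/n) \le n^{-2\gamma}$ once $n$ is large, and that if a stronger decay were wanted one could bound higher moments of $X$ instead.

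Finally I would assemble the two halves. On the event $\{X(w) \ge \tfrac12\mathbb{E}[X]\} = \{p(w) = \Omega(n^4)\}$, the first half gives $\#\mathcal{L}(w) \ge 2^{\Omega(n^4/n^3)} = 2^{\Omega(n)}$, i.e.\ $\#\mathcal{L}(w) \ge 2^{\alpha n}$ for a suitable fixed $\alpha > 0$ and all large $n$; by Lemma~\ref{lemma:basis} the same bound holds for $\mathsf{EG}(w)$. Since the bad event has probability $O(1/n) \le n^{-2\gamma}$ for large $n$, we conclude $\mathbb{P}(\mathsf{EG}(w) \ge 2^{\alpha n}) \ge 1 - n^{-2\gamma}$, which is Theorem~\ref{thm:firstmain}. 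Theorem~\ref{thm:averagemain} then follows immediately: $\mathbb{E}[\mathsf{EG}] \ge 2^{\alpha n}\cdot(1 - n^{-2\gamma}) = \Omega(c^n)$ with $c = 2^{\alpha} > 1$.

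\textbf{Main obstacle.} The delicate point is the first half: making rigorous that every strict decrease of the $2143$-count along a root-to-leaf path coincides with a genuine branching of $\mathcal{T}(w)$, and that a single marching move cannot drop the count too fast. The first needs the precise combinatorics of \cite{Knutson.Yong, AdveRobichauxYong} relating the number of pivots (hence children) to the pattern structure around the accessible box — in particular the claim that a unique child forces no net loss of $2143$-patterns. The second, the $O(n^3)$ per-step bound, is in principle elementary counting but must be argued uniformly over all nodes that can arise, not just the root; I expect the cleanest route is to bound, for any single pivot/accessible-box pair, the number of $2143$-patterns that use the moved pivot position, since patterns avoiding that position and its rays are essentially unaffected by the slide. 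Everything else (the tree-leaf count, the variance estimate, the final assembly) is routine.
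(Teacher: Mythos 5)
Your overall architecture coincides with the paper's: reduce to $\#\mathcal{L}(w)$ via Lemma~\ref{lemma:basis}, bound the per-step drop in the $2143$-count by $O(n^3)$ (this is exactly Lemma~\ref{claim:abc}), convert branch points into leaves via the $2^b$ tree lemma (Lemma~\ref{lemma:xyz}), and finish with a Chebyshev lower-tail bound on the number of $2143$-patterns. Your probabilistic half is fine and in fact slightly more self-contained than the paper's: you bound $\mathrm{Var}(X)=O(n^7)$ directly by noting that indicators on disjoint index sets are independent and there are $O(n^7)$ intersecting pairs, then apply Chebyshev at half the mean and use $2\gamma<1$ to absorb the $O(1/n)$ failure probability into $n^{-2\gamma}$; the paper instead deviates by $n^{\gamma}\sigma$ and quotes Janson--Nakamura--Zeilberger for $\sigma=O(n^{3.5})$. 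Either route works.

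The genuine gap is the statement you defer to the ``Main obstacle'' paragraph: that a node of $\mathcal{T}(w)$ with exactly one child has a child with weakly more $2143$-patterns. You treat this as a known fact ``flagged in the introduction via Knutson--Yong,'' but it is not a citation to prior work --- it is Proposition~\ref{claim:def} of this paper, and it is the paper's main new combinatorial content. Its proof occupies several pages: one first pins down the geometry forced by uniqueness of the pivot (Claims~\ref{claim:anotherjune1claim} and~\ref{claim:june1claim}, which say no two non-special bullets southeast of the accessible box are incomparable and no bullet lies in the two ``forbidden strips'' northwest of the pivot), and then constructs an explicit injection $\psi$ from the $2143$-embeddings of $u$ meeting the three moved rows into those of $u'$, via a ten-case analysis (with well-definedness and injectivity checked on the overlapping cases). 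Nothing in \cite{Knutson.Yong} or \cite{AdveRobichauxYong} hands you this; without it, the claim ``every strict decrease of the $2143$-count is a branch point'' --- the contrapositive on which your whole lower bound $Q(w)\geq \Omega\!\left(p(w)/n^3\right)$ rests --- is unsupported, and the argument does not close. Everything else in your proposal (the per-step $O(n^3)$ bound, the leaf count, the tail bound, and the deduction of Theorem~\ref{thm:averagemain}) is correct and matches the paper.
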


\noindent\emph{Proof of Theorem~\ref{thm:firstmain}:}
Let $N_{\pi,n}(w)$ be the number of $\pi$ patterns
contained in $w\in S_n$.

\begin{proposition}
\label{claim:def}
Suppose in ${\mathcal T}(w)$ that the node $u$ has exactly one child $u'$. then 
\[N_{2143,n}(u')\geq N_{2143,n}(u).\]
\end{proposition}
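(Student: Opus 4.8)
The plan is to analyze a single transition step at a node $u$ with a unique child $u'$ and show that the marching move cannot destroy any $2143$ pattern while (possibly) creating new ones. The key structural fact I would invoke is the characterization, noted in the introduction via \cite{Knutson.Yong}, that a node $u$ has exactly one child precisely when there is a \emph{unique} pivot $b_1$ northwest of the accessible box $e$; equivalently, the rectangle $R_1$ determined by $b_1$ and $e$ contains no other $\bullet$ of $G(u)$ in its interior rows/columns between the two pivots. So the first step is to pin down this ``unique child'' situation in terms of $G(u)$: there is one maximally-southeast $\bullet$, call it at position $(r,c')$, northwest of the accessible box $e=(r',c)$, and the marching move transposes $u$ by $(r, w^{-1}(c))$ in the sense of equation~(\ref{eqn:trans}) (with no siblings appearing, so the sum over children has a single term $w''=u'$).

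Next I would set up the comparison $N_{2143,n}(u')\ge N_{2143,n}(u)$ by exhibiting an injection from the set of $2143$-occurrences in $u$ to those in $u'$. The cleanest route: $u'$ differs from $u$ by swapping the \emph{values} in two positions $p<q$ (the transposition $u'=u\cdot(r,k)$ with $k=w^{-1}(c)$), and because $(r,c)$ is the accessible box sitting outside the dominant component, one checks $u(p)<u(q)$ before the swap and $u'(p)>u'(q)$ after — i.e. the swap \emph{adds} an inversion, consistent with $\ell(u')=\ell(u)+1$, which in turn follows from Theorem~\ref{thm:noorig} / the degree bookkeeping in (\ref{eqn:trans}). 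Now take any $2143$-pattern in $u$, occupying four positions $i_1<i_2<i_3<i_4$. If none of these positions is $p$ or $q$, the same four positions still form a $2143$ in $u'$ (values unchanged). If exactly one of them is in $\{p,q\}$, the relative order among the four values is still unchanged since only one value moved and it moved relative only to its partner. The only case needing real care is when $\{i_1,i_2,i_3,i_4\}\cap\{p,q\}$ has size two, i.e. both swapped positions participate in the pattern: here I must check, using that the swap turns an ascent at $(p,q)$ into a descent (and which of the four roles $2,1,4,3$ the positions $p$ and $q$ play, constrained by $u(p)<u(q)$), that the pattern is preserved or else is replaced by another $2143$-occurrence on the same positions — possibly after re-choosing one of the other two positions. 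One organizes this as a short case check on which pair of the four symbols $\{2,1\}$, $\{1,4\}$, $\{4,3\}$, $\{2,4\}$, $\{2,3\}$, $\{1,3\}$ lands on $(p,q)$, discarding the cases incompatible with $u(p)<u(q)$ and $u'(p)>u'(q)$.

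The main obstacle, I expect, is exactly that last case: verifying that no $2143$-occurrence of $u$ is annihilated when both of its active positions are the two positions whose values get swapped. Making this airtight requires using more than ``one inversion is added'' — it needs the precise geometry of the marching move (that $b_1$ is the unique pivot, that $e$ lies outside the dominant component, and that the boxes $v_1,\dots,v_m$ move strictly northwest), so that one controls \emph{which} positions $p,q$ are and what $u$ looks like near them. I would handle this by translating back to the diagram picture: the uniqueness of the child forces a very rigid local configuration around $R_1$, and in that configuration the only way both pattern-positions coincide with $\{p,q\}$ forces the remaining two pattern-positions to still witness a $2143$ in $u'$ (often by the same four positions, with the roles of the two swapped values exchanging between, say, the ``$2$'' and the ``$4$'' — which preserves a $2143$). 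Everything else (the zero- and one-overlap cases) is routine, and the final count inequality is immediate once the injection is in hand. \QED
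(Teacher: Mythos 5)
Your structural setup is incorrect, and this breaks the whole argument. A child $u'$ of $u$ in ${\mathcal T}(w)$ is \emph{not} obtained from $u$ by a single transposition adding an inversion: in the transition recurrence (\ref{eqn:trans}) the term $w'=w\cdot(r,k)$ with $\ell(w')=\ell(w)-1$ is the coefficient of $x_r$, while the children are the permutations $w''$, each of the \emph{same} length as $w$ (this is forced by $\#{\sf Red}(u)=\sum_v f^{\lambda(v)}$ with $|\lambda(v)|=\ell(u)$, and graphically by the fact that the marching move preserves the number of boxes of the diagram). Concretely, the marching move with unique pivot $A=(c,d)$, accessible box at $(x,y)$, and dots $B=(x',y)$, $C=(x,y')$ replaces $A,B,C$ by $A'=(x,d)$, $B'=(c,y)$, $C'=(x',y')$; so \emph{three} rows of the permutation change (a $3$-cycle), not two, and $\ell(u')=\ell(u)$, not $\ell(u)+1$. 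Your case analysis keyed to ``which pair of the four symbols lands on $(p,q)$'' is therefore analyzing the wrong map.

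Beyond that, two of your simplifying claims are false. First, the marching move \emph{can} destroy $2143$ patterns: whenever the pivot $A$ plays the role of the ``$1$'' in a pattern whose ``$4$'' and ``$3$'' lie between columns $y$ and the right edge, moving $A$ down to row $x$ kills that occurrence, and the injection must send it to a \emph{different} occurrence in $u'$ (in the paper's proof, one using $B'$ in place of $A$). Second, even in your own transposition framework the one-overlap case is not automatic: if exactly one pattern position is a changed position, the value there is replaced by a genuinely different value, which can alter its relative order with the other three pattern values and destroy the occurrence. What actually makes the statement true is the geometric rigidity of the unique-child situation — there is no $\bullet$ strictly north of row $c$ between columns $d$ and $y$, none strictly west of column $d$ between rows $c$ and $x$, and no two incomparable $\bullet$'s southeast of the accessible box (otherwise accessibility or uniqueness of the pivot fails) — and these facts must be deployed in a full case analysis over which of the three moved dots $A,B,C$ occupies which of the four pattern roles, producing an explicit injection on the patterns that meet the changed rows. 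That case analysis is the substance of the proof and is absent from your proposal.
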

\noindent
\emph{Proof of Proposition~\ref{claim:def}:}
Let the accessible box ${\bf z}_u$ of $u$ be in position $(x,y)$. By definition of $D(u)$, there is
a $\bullet$ of $G(u)$ at $C=(x,y')$ for some $y'>y$, and there is a $\bullet$ at $B=(x',y)$ for some $x'>x$. Let $b_1$ be the unique pivot of
$D(u)$, i.e., the southeastmost $\bullet$ that is northwest of ${\bf z}_u$
(as in Section~\ref{sec:2.2}). Suppose $b_1$ is at position $A=(c,d)$. Thus, $c<x$ and $d<y$. 

By definition of the transition algorithm,
all $\bullet$'s of $G(u)$ and $G(u')$ are in the same position, except $A,B,C$ in $G(u)$ are respectively replaced by $A',B',C'$ in $G(u')$ where
\begin{align*}
A=(c,d) & \mapsto  A'=(x,d)\\
B=(x',y) & \mapsto  B'=(c,y)\\
C=(x,y') & \mapsto  C'=(x',y')
\end{align*}

Schematically, the march move
looks as follows (we have thickened the moving $\bullet$'s).

\setlength{\unitlength}{.27mm}
\[
\begin{picture}(320,115)
\put(0,15){\framebox(100,100)}
\put(10,105){\circle*{8}}
\put(10,105){\line(1,0){90}}
\put(10,105){\line(0,-1){90}}
\put(-13,101){$c$}
\put(6,119){$d$}
\put(44,119){$y$}
\put(84,119){$y'$}
\put(-13,61){$x$}
\put(-13,21){$x'$}
\put(44,61){${\bf z}_u$}
\put(140,60){$\leadsto$}
\put(70,85){\circle*{4}}
\put(70,85){\line(1,0){30}}
\put(70,85){\line(0,-1){70}}

\put(90,65){\circle*{8}}
\put(90,65){\line(1,0){10}}
\put(90,65){\line(0,-1){50}}

\put(30,45){\circle*{4}}
\put(30,45){\line(1,0){70}}
\put(30,45){\line(0,-1){30}}

\put(50,25){\circle*{8}}
\put(50,25){\line(1,0){50}}
\put(50,25){\line(0,-1){10}}

\thinlines

\put(20,55){\framebox(40,40)}
\put(20,75){\line(1,0){40}}
\put(40,55){\line(0,1){40}}


\put(200,15){\framebox(100,100)}
\put(250,105){\circle*{8}}
\put(250,105){\line(1,0){50}}
\put(250,105){\line(0,-1){90}}
\put(270,85){\circle*{4}}
\put(270,85){\line(1,0){30}}
\put(270,85){\line(0,-1){70}}

\put(187,101){$c$}
\put(206,119){$d$}
\put(244,119){$y$}
\put(284,119){$y'$}
\put(187,61){$x$}
\put(187,21){$x'$}

\put(210,65){\circle*{8}}
\put(210,65){\line(1,0){90}}
\put(210,65){\line(0,-1){50}}

\put(230,45){\circle*{4}}
\put(230,45){\line(1,0){70}}
\put(230,45){\line(0,-1){30}}

\put(290,25){\circle*{8}}
\put(290,25){\line(1,0){10}}
\put(290,25){\line(0,-1){10}}

\thinlines

\put(200,75){\framebox(40,40)}
\put(200,95){\line(1,0){40}}
\put(220,75){\line(0,1){40}}
\end{picture}
\]

\begin{claim} 
\label{claim:anotherjune1claim}
If there are two $\bullet$'s, other than $\{B,C\}$, that are weakly south and weakly east 
of ${\bf z}_u$ then one $\bullet$ must be (strictly) southeast of the other.
\end{claim}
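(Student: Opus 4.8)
The plan is to argue by contradiction, reducing everything to the defining properties of the accessible box. Suppose there are two $\bullet$'s $P\neq Q$ of $G(u)$, both different from $B$ and $C$, that lie weakly south and weakly east of ${\bf z}_u=(x,y)$ but with neither strictly southeast of the other. First I would note that such a $\bullet$ is automatically \emph{strictly} southeast of ${\bf z}_u$: the only $\bullet$ in row $x$ is $C=(x,y')$ and the only $\bullet$ in column $y$ is $B=(x',y)$, so any $\bullet$ at $(i,j)$ with $i\ge x$, $j\ge y$ other than $B,C$ has $i>x$ and $j>y$. Since distinct $\bullet$'s lie in distinct rows and distinct columns, ``neither strictly southeast of the other'' forces, after possibly swapping names, $P=(i_1,j_1)$ and $Q=(i_2,j_2)$ with $x<i_1<i_2$ and $y<j_2<j_1$.

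The key observation is that the box $(i_1,j_2)$ then lies in $D(u)$: indeed $u(i_1)=j_1>j_2$ and $u^{-1}(j_2)=i_2>i_1$, which are exactly the two conditions defining membership in $D(u)$, and since $i_1>x$ and $j_2>y$ this box is strictly southeast of ${\bf z}_u$. I would next argue that $(i_1,j_2)$ is not in the dominant component of $D(u)$. This uses the standard fact that the dominant component of a Rothe diagram is closed under moving weakly northwest --- concretely, $(a,b)$ lies in it exactly when $u(a')>b$ for all $a'\le a$ --- so if $(i_1,j_2)$ were in the dominant component then, as $x\le i_1$ and $y\le j_2$, the box ${\bf z}_u=(x,y)$ would be too, contradicting that the accessible box is by definition not in the dominant component. (If $(1,1)\notin D(u)$ there is no dominant component and this step is vacuous.)

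To conclude, let $K$ be the connected component of $D(u)$ containing $(i_1,j_2)$, and let $(p,q)$ be the box of $K$ with largest row index and, among those, largest column index. If $(p+1,q)$ or $(p,q+1)$ were in $D(u)$ it would be edge-adjacent to $(p,q)\in K$, hence in $K$, contradicting maximality of $(p,q)$; therefore $(p+1,q),(p,q+1)\notin D(u)$ and $(p,q)\in{\mathcal Ess}(u)$. As connected components are disjoint and $(i_1,j_2)$ is not in the dominant component, $(p,q)$ is not in the dominant component either, and $p\ge i_1>x$. Thus $(p,q)$ is an essential-set box, outside the dominant component, strictly south of ${\bf z}_u$ --- contradicting that ${\bf z}_u$ is the southmost (then eastmost) such box, i.e., the accessible box. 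This contradiction establishes the claim.

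The only step here that is not routine bookkeeping with the definitions of $D(u)$ and ${\mathcal Ess}(u)$ is the description of the dominant component invoked in the second paragraph, so I expect that to be the thing to state (or cite) carefully; once it is in hand, the argument is short. One could alternatively dispense with it by observing directly that any lattice path in $D(u)$ joining $(1,1)$ to a box strictly southeast of ${\bf z}_u$ must meet a box of $D(u)$ weakly northwest of ${\bf z}_u$ in row $x$ and in column $y$, which again puts ${\bf z}_u$ in the dominant component; but quoting the standard description is cleaner.
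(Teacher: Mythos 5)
Your proof is correct and follows essentially the same route as the paper's: both arguments take the two bullets in "anti-diagonal" position and observe that they force a box of $D(u)$ (your $(i_1,j_2)$) strictly southeast of ${\bf z}_u$ lying outside the dominant component, contradicting that ${\bf z}_u$ is the accessible box. The only difference is that you spell out two steps the paper leaves implicit — the northwest-closure characterization of the dominant component and the passage from that box to an essential-set box of its component — which is a reasonable expansion, not a different argument.
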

\noindent
\emph{Proof of Claim~\ref{claim:anotherjune1claim}:} Suppose not. Then 
let the two $\bullet$'s be at $(q,r)$ and $(m,n)$ where $q>m$ and $r<n$. Then there is a box
${\bf z}\neq {\bf z}_u$ of $D(u)$ in position $(m,q)$, which is weakly south and weakly east of ${\bf z}_u$. 
Since ${\bf z}_u$ is not in the dominant component of $D(u)$, then
${\bf z}$ cannot be in that component either. Therefore, 
${\bf z}_u$ is not the accessible box of $D(u)$, a contradiction.
\qed

\begin{claim} 
\label{claim:june1claim}
There is no $\bullet$ of $G(u)$ strictly north of row $c$ and strictly between columns $d$ and $y$.
Similarly, there is no $\bullet$ of $G(u)$ strictly west of column $d$ and strictly between rows $c$ and $x$.

\end{claim}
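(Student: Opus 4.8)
The plan is to first extract the one consequence of the hypothesis that really matters: since $u$ has exactly one child, $A=(c,d)$ is the \emph{unique} pivot, i.e.\ the unique maximally-southeast $\bullet$ of $G(u)$ strictly northwest of ${\bf z}_u$. From this I would deduce the stronger statement that \emph{every} $\bullet$ of $G(u)$ strictly northwest of ${\bf z}_u$ (in a row $<x$ and a column $<y$) is in fact weakly northwest of $A$ — it lies in rows $\le c$ and columns $\le d$. This is a general finite-poset fact: order the $\bullet$'s of $G(u)$ lying strictly northwest of ${\bf z}_u$ by the componentwise (``weakly northwest'') order; by definition its maximal elements are precisely the pivots, so a single pivot forces this finite poset to have a unique maximal element, which is then its greatest element (any maximal element lying above a given one is maximal in the whole poset, hence equals the unique maximal one). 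Thus every such $\bullet$ is $\le A$ in this order.

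Granting this, Claim~\ref{claim:june1claim} is immediate. If a $\bullet$ of $G(u)$ were strictly north of row $c$ and strictly between columns $d$ and $y$, it would sit at a position $(p,q)$ with $p<c<x$ and $d<q<y$; since $p<x$ and $q<y$ it is strictly northwest of ${\bf z}_u$, hence weakly northwest of $A=(c,d)$, which forces $q\le d$ — contradicting $q>d$. Symmetrically, a $\bullet$ strictly west of column $d$ and strictly between rows $c$ and $x$ would sit at $(p,q)$ with $c<p<x$ and $q<d<y$, so it is strictly northwest of ${\bf z}_u$, hence weakly northwest of $A$, forcing $p\le c$ — contradicting $p>c$.

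The main obstacle is the first step: pinning down precisely why ``exactly one child'' forbids a $\bullet$ in the ``northeast quadrant'' $\{(p,q):p<c,\ d<q<y\}$ or the ``southwest quadrant'' $\{(p,q):c<p<x,\ q<d\}$ of $A$ inside the region cut out by ${\bf z}_u$. The finite-poset argument above is the cleanest way; alternatively one argues directly that such a stray $\bullet$, together with the $\bullet$'s southeast of it (all still northwest of ${\bf z}_u$), must terminate at a maximally-southeast $\bullet$ distinct from $A$ — a second pivot, contradicting that $u$ has a single child. Everything after that is reading off coordinates, so no further difficulty is anticipated. (The claim's restriction to strict inequalities is automatic, as $A$ is the only $\bullet$ in its row and in its column.)
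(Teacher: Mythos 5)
Your proposal is correct and is essentially the paper's argument: the paper also derives a contradiction from the uniqueness of the pivot $A$, by taking a maximally southeast $\bullet$ in the forbidden region and observing it would be a second pivot (exactly the ``alternative'' direct argument you sketch at the end). Your packaging via the finite-poset fact that a unique maximal element among the $\bullet$'s strictly northwest of ${\bf z}_u$ is the greatest element is just a slightly more general phrasing of the same step, and the coordinate check that follows is fine.
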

\noindent
\emph{Proof of Claim~\ref{claim:june1claim}:} We prove only the first sentence of the claim, as the second sentence is analogous. Suppose not; we may assume this $\bullet$ is maximally
southeast with the assumed properties. Then $A=(c,d)$ and this $\bullet$
are two pivots for $D(u)$, which implies $u$ has at least two children, 
contradicting the hypothesis of the Proposition.
\qed

Let ${\mathcal F}_u$ consist of all embedding positions $i_1<i_2<i_3<i_4$ 
of a $2143$-pattern in $u$. Also, let ${\mathcal F}'_u$ be the subset of ${\mathcal F}_u$
consisting of those $i_1<i_2<i_3<i_4$ such that 
\[\{i_1,i_2,i_3,i_4\}\cap \{c,x,x'\}=\emptyset\] 
(i.e., the positions do not involve the rows of $A,B$ or $C$). Let 
\[{\mathcal F}''_u={\mathcal F}_u\setminus {\mathcal F}'_u.\]
Similarly, we define ${\mathcal F}_{u'}$, ${\mathcal F}'_{u'}$ and ${\mathcal F}''_{u'}$ in exactly the same way, except with respect to $u'$.

Since ${\mathcal F}'_u={\mathcal F}'_{u'}$, it suffices to establish an injection 
\[\psi:{\mathcal F}''_u\hookrightarrow {\mathcal F}''_{u'}.\]
In what follows, we will let $\bullet_i$ refer to the $\bullet$
in the diagram corresponding to the ``$i$'' in the $2143$ pattern, for $1\leq i\leq 4$. In addition,
if $i_1$ is in the row of $A$ we will write ``$A=\bullet_2$'', \emph{etc}.  
We define now $\psi$ in cases:

\smallskip
\noindent
{\sf Case 1:} ($B=\bullet_1$ or $B=\bullet_2$) The $\bullet_4$ and $\bullet_3$ appear strictly right of column $y$. This contradicts 
Claim~\ref{claim:anotherjune1claim}. Hence, no elements of ${\mathcal F}''_u$ fall into this case. 

\smallskip
\noindent
{\sf Case 2:} ($C=\bullet_1$ or $C=\bullet_2$) $\bullet_4$ and $\bullet_3$ appear strictly southeast of ${\bf z}_u$. As in {\sf Case 1}, this contradicts Claim~\ref{claim:anotherjune1claim}.
Again, no elements of ${\mathcal F}''_u$ fall into this case.

\smallskip\noindent
{\sf Case 3:} ($A=\bullet_1$) Let $\bullet_2$ be at position $(r,s)$. Hence $r<c$ and $s>d$.
If moreover, $s<y$ we contradict the first sentence of Claim~\ref{claim:june1claim}. Hence, $s>y$.  We must have that $\bullet_2\not\in \{A,B,C\}$ and $\bullet_4$ and $\bullet_3$ are strictly to the right of column $y$.

\noindent
{\sf Subcase 3a:} ($\bullet_4$ and $\bullet_3$ are both strictly south of row $x$) This contradicts Claim~\ref{claim:anotherjune1claim}.

\noindent
{\sf Subcase 3b:} ($\bullet_4$ and $\bullet_3$ are both strictly north of row $x$) 
Thus $\{\bullet_3,\bullet_4\}\cap \{A,B,C\}=\emptyset$. The $2143$ pattern
$[\bullet_2,A,\bullet_4,\bullet_3]$ is destroyed by the marching move,
i.e., $[\bullet_2,A',\bullet_4,\bullet_3]$ is not a $2143$ pattern in $u'$. Now, in $u'$ we now have the $2143$ pattern
$[\bullet_2,B',\bullet_4,\bullet_3]$. Hence we define
\[\psi([\bullet_2,A,\bullet_4,\bullet_3]):=[\bullet_2,B',\bullet_4,\bullet_3].\]

\noindent
{\sf Subcase 3c:} ($\bullet_4$ is strictly north of row $x$ and $\bullet_3$ is strictly south of row $x$).
Since $s>y$, $C\neq \bullet_3$. Hence $\{\bullet_3,\bullet_4\}\cap \{A,B,C\}=\emptyset$.
 The $2143$ pattern $[\bullet_2,A,\bullet_4,\bullet_3]$ is destroyed by the marching move. However, in $u'$ we now have the $2143$ pattern $[\bullet_2,B',\bullet_4,\bullet_3]$. We again  define
\[\psi([\bullet_2,A,\bullet_4,\bullet_3]):=[\bullet_2,B',\bullet_4,\bullet_3].\]

\noindent
{\sf Subcase 3d:} ($\bullet_3$ is in row $x$ and $\bullet_4$ is strictly above row $x$) Then in fact $\bullet_3=C$
while $\bullet_4\not\in \{A,B,C\}$. In this case,
\[\psi([\bullet_2,A,\bullet_4,C]):=[\bullet_2, B', \bullet_4,C'].\]

\noindent
{\sf Subcase 3e:} ($\bullet_4$ is in row $x$ and $\bullet_3$ is strictly south of row $x$) Thus $\bullet_4=C$ and $\bullet_3$
is strictly southeast of $z_u$. This contradicts Claim~\ref{claim:anotherjune1claim}.

\smallskip\noindent
{\sf Case 4:} ($A=\bullet_2$) Let the $1$ be at position $(r,s)$. Hence $r>c$ and $s<d$. If $r\leq x$ then we contradict the second sentence of Claim~\ref{claim:june1claim}. Hence $r>x$. We have that $\bullet_4$ and $\bullet_3$ are in rows strictly south of $x$. Moreover, there must be a box $e$ of $D(u)$ in the row of $\bullet_4$ and the column of $\bullet_3$ that is therefore strictly south of ${\bf z}_u$. Since the columns of $\bullet_4$ and $\bullet_3$ are strictly east of column $d$, the box $e$ is not part of the dominant component of $D(u)$. Hence, ${\bf z}_u$ cannot be the accessible box, a contradiction. Thus, no elements of ${\mathcal F}''_u$ are in this case.

\smallskip
\noindent
{\sf Case 5:} ($A=\bullet_3$) Hence, in $u$, $\bullet_{2},\bullet_{1},\bullet_{4}$ are strictly north of row $c$.
Thus $\{\bullet_1,\bullet_2,\bullet_4\}\cap \{A,B,C\}=\emptyset$ and
 $\bullet_{2},\bullet_{1},\bullet_{4}$ remain in the same place in $u'$. Set
\[\psi([\bullet_{2},\bullet_1,\bullet_4,A]):=[\bullet_2,\bullet_1,\bullet_4,A'].\]

\smallskip
\noindent
{\sf Case 6:} ($A=\bullet_4$) $\bullet_3$ is strictly south of the row of $A$. If it is also weakly north
of $x$, we contradict the second sentence of Claim~\ref{claim:june1claim}. Hence $\bullet_3$
is strictly south of $x$, i.e., the row of~$e$. Now, $\bullet_2,\bullet_1,\bullet_3$ are the same position
in $u$ and $u'$ and $\{\bullet_1,\bullet_2,\bullet_3\}\cap \{A,B,C\}=\emptyset$. Here,
\[\psi([\bullet_2,\bullet_1,A,\bullet_3]):=[\bullet_2,\bullet_1,A',\bullet_3].\]
Since the row of $A'$ is $x$ the output is a $2143$ pattern in $u'$.

\smallskip
\noindent
{\sf Case 7:} ($B=\bullet_4$) Let $\bullet_3$ be at
$(r,s)$. Thus $r>x'$ and $s<y$. There must be a
box $e\in D(w)$ in position $(x',s)$. Now, $\bullet_2$ and $\bullet_1$ are in columns strictly left of $s$ and strictly above row $r$. Hence $e$ cannot be in the dominant component
of $D(w)$. Thus, since $e$ is further south than ${\bf z}_u$, the latter is not  accessible, a contradiction. So, no elements of ${\mathcal F}''_u$ appear in this case.

\smallskip\noindent
{\sf Case 8:} ($B=\bullet_3$) Let $\bullet_4$
be in position $(r,s)$. 

\noindent
{\sf Subcase 8a:} ($r<c$) Therefore, $\bullet_1$ and $\bullet_2$ 
are also strictly above row $c$. Since $\bullet_1,\bullet_2$ and $\bullet_4$ stay in the same
place in $u$ and $u'$ and $B'$ is in row $c$ in $u'$. Moreover, $\{\bullet_1,\bullet_2,\bullet_4\}\cap \{A,B,C\}=\emptyset$. We may define
\[\psi([\bullet_2,\bullet_1,\bullet_4,B]):=[\bullet_2,\bullet_1,\bullet_4,B'].\]

\noindent
{\sf Subcase 8b:} ($x<r<x'$) This contradicts Claim~\ref{claim:anotherjune1claim}.

\noindent
{\sf Subcase 8c:} $(r=c)$ This implies $A=\bullet_4$, which is impossible.

\noindent
{\sf Subcase 8d:} ($c<r< x$) We may assume $A\neq \bullet_1,\bullet_2$ since those cases are handled by
{\sf Case 3} and {\sf Case 4}. Now, $\bullet_1$ and $\bullet_2$ are strictly west of
column $y$ and strictly north of row $x$. By the assumption that $A=b_1$ is the (unique) pivot, combined with Claim~\ref{claim:june1claim},
both $\bullet_1$ and $\bullet_2$ are strictly northwest of $A$. Thus,
$\bullet_1$ and $\bullet_2$ are in the same place
in $u'$, and $\{\bullet_1,\bullet_2,\bullet_4\}\cap \{A,B,C\}=\emptyset$.
Since $A'$ is in row $x$, it make sense to let
\[\psi([\bullet_2,\bullet_1,\bullet_4,B]):=[\bullet_2,\bullet_1,\bullet_4,A'].\]

\noindent
{\sf Subcase 8e:} $(r=x)$ Hence $C=\bullet_4$. For the same reasons as in {\sf Subcase 8d},
both $\bullet_1$ and $\bullet_2$ are strictly northwest of $A$. Thus,
$\bullet_1$ and $\bullet_2$ are in the same place
in $u'$ and $\{\bullet_1,\bullet_2\}\cap \{A,B,C\}=\emptyset$. In this case set
\[\psi([\bullet_2,\bullet_1,C,B]):=[\bullet_2,\bullet_1,B',A'].\]

\smallskip
\noindent
{\sf Case 9:} ($C=\bullet_3$) Let $\bullet_4$ be in position $(r,s)$. Hence $s>y'$.

\noindent
{\sf Subcase 9a:} ($r<c$) Hence $\bullet_1,\bullet_2$ and $\bullet_4$ remain in the same place
in $u'$ and $\{\bullet_1,\bullet_2,\bullet_4\}\cap \{A,B,C\}=\emptyset$. Since $C'$ is further south than $C$, we may set
\[\psi([\bullet_2,\bullet_1,\bullet_4,C]):=[\bullet_2,\bullet_1,\bullet_4,C'].\]

\noindent
{\sf Subcase 9b:} ($c<r<x$) We may also assume that $A\neq\bullet_1$ and $A\neq \bullet_2$, since those are handled in {\sf Case 3} and {\sf Case 4}, respectively. Thus $\{\bullet_1,\bullet_2,\bullet_4\}\cap \{A,B,C\}=\emptyset$. Here, 
\[\psi([\bullet_2,\bullet_1,\bullet_4,C]):=[\bullet_2,\bullet_1,\bullet_4,C'].\]

\noindent
{\sf Subcase 9c:} $(r=c)$ Then $A=\bullet_4$, which is impossible.

\smallskip
\noindent
{\sf Case 10:} ($C=\bullet_4$) We may assume that $A\neq \bullet_1,\bullet_2$ ({\sf Case 3} and {\sf Case 4}) and
also $B\neq \bullet_3$ ({\sf Case 8}). Therefore $\{\bullet_1,\bullet_2,\bullet_3\}\cap \{A,B,C\}=\emptyset$.
Let $\bullet_3$ be in position $(r,s)$. 

\noindent
{\sf Subcase 10a:} ($y<s<y'$) This contradicts Claim~\ref{claim:anotherjune1claim}.

\noindent
{\sf Subcase 10b:} ($s=y$) This means $\bullet_3=B$, a situation we have ruled out/refer to {\sf Case 8}.

\noindent
{\sf Subcase 10c:} ($s<y$) If moreover
$r>x'$ then there exists $e\in D(w)$ in position $(x',s)$, which is therefore strictly south of ${\bf z}_u$. Since column $s$ is strictly east of the column of $\bullet_2$, $e$ is not in the dominant component. Hence ${\bf z}_u$ is not accessible, a contradiction.
Now $r\neq x'$ (since we assumed $B\neq \bullet_3$). Thus, $x<r<x'$
and it follows that $\bullet_3$ is in the same place in $u'$. By the reasoning of the first paragraph of
{\sf Subcase 8d}, $\bullet_1,\bullet_2$ are strictly northwest of $A$. Hence $\bullet_1,\bullet_2$ also remain in the same
place in $u'$. Summing up, since  $B'$ is in row $c$, we may define
\[\psi([\bullet_2,\bullet_1,C,\bullet_3]):=[\bullet_2,\bullet_1,B',\bullet_3].\]
\noindent
\emph{$\psi$ is well-defined:} The above cases handle each of the possibilities
for $A,B,C$ being one of $1,2,3,4$. Our definition of $\psi$ is shown 
to send an element of ${\mathcal F}''_u$ to an element of ${\mathcal F}''_{u'}$.

We also need that if an element of ${\mathcal F}''_u$ occurs in two cases, 
$\psi$ sends them to the \emph{same} element of  ${\mathcal F}''_{u'}$.
By inspection, the only overlapping situations are 
{\sf Subcase 3d}$\leftrightarrow${\sf Subcase 9b} and
{\sf Subcase 8d}$\leftrightarrow${\sf Case 10}. In both these cases we
define $\psi$ to be consistent on the overlap. 

\noindent
\emph{$\psi$ is an injection:} This is by inspection of pairs of subcases where $\psi$'s output was given.
By our choice of notation, if $\bullet_i$ appears in the description of the input to $\psi$, 
it cannot be equal to $A,B$ or $C$ and hence in the output, it cannot be equal to $A',B'$ or $C'$ (as $\{A,B,C\}$ and $\{A',B',C'\}$ occupy the same rows).
Therefore, if in two cases, some coordinate of the two outputs differ \emph{symbolically}, those outputs cannot be equal.
After ruling out these pairs, we are left with a few to check:

\noindent
{\sf Subcase 3b}, {\sf Subcase 3c}: These differ in the fourth coordinate since in the former
case, $\bullet_3$ is strictly north of row $x$ and in the latter case, $\bullet_3$ is strictly south of row $x$.

\noindent
{\sf Case 5} and {\sf Subcase 8d}: These differ in the third coordinate since in the former case, $\bullet_4$
appears above row $c$ whereas in the latter case, $\bullet_4$ is below row $c$.

\noindent
{\sf Subcase 9a} and {\sf Subcase 9b}: These differ in the third coordinate for the same reason as
the previous pair.\qed

\begin{lemma}
\label{claim:abc}
Let $w\in {S}_n$ and suppose $u\to u'$ in ${\mathcal T}(w)$. Then
\[N_{2143,n}(u)-N_{2143,n}(u')\leq 2n^3+3n^2-n.\]
\end{lemma}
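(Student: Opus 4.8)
The plan is to use the fact that a marching move is extremely local. Suppose $u\to u'$ is a marching move at the pivot $b_1$; keep the notation of Proposition~\ref{claim:def}, so the accessible box of $u$ is at $(x,y)$, the pivot is $A=(c,d)$ with $c<x$ and $d<y$, and $B=(x',y)$ (with $x'>x$) and $C=(x,y')$ (with $y'>y$) are the two $\bullet$'s of $G(u)$ forced by the fact that $(x,y)\in D(u)$. As recorded in that proof, passing to $u'$ replaces exactly the three dots $A,B,C$ by $A'=(x,d)$, $B'=(c,y)$, $C'=(x',y')$ and leaves every other dot of $G(u)$ fixed. Equivalently, $u'(i)=u(i)$ for every $i\in[n]\setminus\{c,x,x'\}$. (This description of the move does not use the ``unique child'' hypothesis of Proposition~\ref{claim:def}; it is just the transition step at a single pivot.)

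The first step is to note the immediate consequence: if a $2143$-pattern of $u$ occupies four positions $i_1<i_2<i_3<i_4$, none of which lies in $\{c,x,x'\}$, then the four dots witnessing it are common to $G(u)$ and $G(u')$, so it is also a $2143$-pattern of $u'$. Hence every $2143$-pattern counted by $N_{2143,n}(u)$ but not by $N_{2143,n}(u')$ — a pattern ``destroyed'' by the move — must occupy at least one of the three rows $c$, $x$, $x'$. Therefore
\[
N_{2143,n}(u)-N_{2143,n}(u')\ \le\ \#\bigl\{2143\text{-patterns of }u\text{ occupying at least one of the rows }c,x,x'\bigr\}.
\]

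The second step is a crude count of the right-hand side. Fix a row $\rho$. A $2143$-pattern one of whose four rows is $\rho$ is determined by the choice of its other three rows, a $3$-subset of $[n]\setminus\{\rho\}$, and each such choice yields at most one $2143$-pattern; so there are at most $\binom{n-1}{3}$ of them. Summing over $\rho\in\{c,x,x'\}$ (a union bound, which overcounts the patterns occupying two or three of these rows) gives
\[
N_{2143,n}(u)-N_{2143,n}(u')\ \le\ 3\binom{n-1}{3}\ \le\ 3\binom{n}{3}\ =\ \frac{n(n-1)(n-2)}{2}\ \le\ \frac{n^3}{2}\ \le\ 2n^3+3n^2-n,
\]
where the last inequality holds for all $n\ge 1$ because $2n^3+3n^2-n-\tfrac{n^3}{2}=n\bigl(\tfrac32 n^2+3n-1\bigr)\ge 0$. (If one prefers, the number of $4$-subsets of $[n]$ meeting a fixed $3$-set is exactly $\binom{n}{4}-\binom{n-3}{4}=\binom{n-1}{3}+\binom{n-2}{3}+\binom{n-3}{3}$, which again has leading term $n^3/2$ and is checked directly to be $\le 2n^3+3n^2-n$.)

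The only point with actual content is the locality statement in the first paragraph — that a marching move relocates exactly the three dots $A$, $B$, $C$ — which I would inherit verbatim from the description of transition already used in the proof of Proposition~\ref{claim:def}. After that there is no real obstacle: the pattern count is routine, and the numerical bound $2n^3+3n^2-n$ is satisfied with a comfortable margin, so I would not attempt to sharpen the constant, since any $O(n^3)$ bound is all that is needed downstream for Theorem~\ref{thm:firstmain}.
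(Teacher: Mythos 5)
Your proof is correct and follows essentially the same route as the paper's: both observe that a marching move changes the permutation in exactly three rows, so any destroyed $2143$-pattern must occupy at least one of those rows, and then bound such patterns by an $O(n^3)$ count. Your union bound $3\binom{n-1}{3}$ is in fact slightly sharper than the paper's cruder tally $\binom{3}{3}4\binom{n}{1}+\binom{3}{2}6\binom{n}{2}+\binom{3}{1}4\binom{n}{3}=2n^3+3n^2-n$, and it comfortably implies the stated inequality.
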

\noindent
\emph{Proof of Lemma~\ref{claim:abc}:}
Since $u\to u'$ in ${\mathcal T}(w)$, exactly three positions $a,i,j$ differ between $u$ and $u'$. We are claiming that
\begin{equation}
\label{eqn:Nov27abcd}
N_{2143,n}(u)-N_{2143,n}(u')\leq {3\choose 3}4{n\choose 1}+{3\choose 2}6{n\choose 2}+{3\choose 1}4{n\choose 3}=
2n^3+3n^2-n.
\end{equation} 
Let $t_1<t_2<t_3<t_4$ be the indices of a $2143$-pattern in $u$. First suppose $\{t_1,t_2,t_3,t_4\}\cap \{a,i,j\}=\emptyset$. Clearly, $t_1<t_2<t_3<t_4$
are indices of a $2143$-pattern in $u'$. Therefore this case does not contribute to $N_{2143,n}(u)-N_{2143,n}(u')$.

Next assume $\#(\{t_1,t_2,t_3,t_4\}\cap \{a,i,j\})=1$. There
are ${3\choose 1}$ choices for which of $a,i$ or $j$ is in $\{t_1,t_2,t_3,t_4\}$. Then there are at most ${n \choose 3}$ choices for $\{t_1,t_2,t_3,t_4\}\setminus \{a,i,j\}$.
Finally there are $4$ choices for which $k$ satisfies $t_k\in \{a,i,j\}$. Therefore, this case contributes at most ${3\choose 1}4{n\choose 3}$ to $N_{2143,n}(u)-N_{2143,n}(u')$, thus
explaining the third term of (\ref{eqn:Nov27abcd}).
 
Similar arguments explain the first and second terms of (\ref{eqn:Nov27abcd}) as the contributions to $N_{2143,n}(u)-N_{2143,n}(u')$ from the
cases that 
\[\#(\{t_1,t_2,t_3,t_4\}\cap \{a,i,j\})=3 \text{\  and \ 
$\#(\{t_1,t_2,t_3,t_4\}\cap \{a,i,j\})=2$},\] 
respectively. The lemma thus follows.
\qed

The following is known; see work of M.~Bona \cite{Bona} and 
of S.~Janson, B.~Nakamura, and D.~Zeilberger \cite{Janson}. The proof being
not difficult, we include it for completeness.

\begin{lemma}
\label{prop:Bona}
For any $\pi\in S_k$, the expected
number of occurrences of $\pi$ as a pattern in $w\in {S}_n$ (selected using the uniform distribution) is
${n\choose k}/k!$.
\end{lemma}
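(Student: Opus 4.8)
\noindent\emph{Proof proposal.}
The plan is a one-line linearity-of-expectation argument together with a symmetry computation. For each $k$-element subset $I=\{i_1<i_2<\cdots<i_k\}\subseteq\{1,2,\ldots,n\}$, introduce the indicator random variable $X_I$ on $S_n$ that equals $1$ exactly when $w(i_1),w(i_2),\ldots,w(i_k)$ are in the same relative order as $\pi(1),\pi(2),\ldots,\pi(k)$, and $0$ otherwise. By the definition of $N_{\pi,n}$, we have $N_{\pi,n}(w)=\sum_I X_I(w)$, the sum being over all $\binom{n}{k}$ such subsets $I$. Taking expectations (with $w$ uniform on $S_n$) and using linearity,
\[
{\mathbb E}[N_{\pi,n}]=\sum_I {\mathbb P}(X_I=1).
\]

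The only substantive point is that ${\mathbb P}(X_I=1)=\tfrac{1}{k!}$ for every fixed $I$. I would argue this by conditioning on the (unordered) value set $V=\{w(i):i\in I\}$: given $V$, the restriction $i\mapsto w(i)$ is a uniformly random bijection $I\to V$, so each of the $k!$ possible relative orders of $(w(i_1),\ldots,w(i_k))$ is equally likely, and precisely one of them realizes $\pi$; hence ${\mathbb P}(X_I=1\mid V)=\tfrac1{k!}$, which averages to $\tfrac1{k!}$. (Alternatively, a direct count: a permutation with a $\pi$-occurrence at positions $I$ is obtained by choosing $V$ in $\binom nk$ ways, placing it on $I$ in the unique order matching $\pi$, then filling the remaining $n-k$ positions arbitrarily in $(n-k)!$ ways; dividing $\binom nk (n-k)!$ by $n!$ gives $\tfrac1{k!}$.) Substituting, ${\mathbb E}[N_{\pi,n}]=\binom nk\cdot\tfrac1{k!}$, as claimed.

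I do not expect any real obstacle: the argument is entirely elementary, and the one place deserving care is the symmetry claim ${\mathbb P}(X_I=1)=\tfrac1{k!}$, which is why I would spell out the conditioning-on-$V$ step (or the direct count) rather than merely asserting it. Note in particular that the answer depends only on $k$ and not on the specific pattern $\pi\in S_k$.

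\begin{proof}
For a $k$-subset $I=\{i_1<\cdots<i_k\}\subseteq\{1,\ldots,n\}$ let $X_I(w)=1$ if $w(i_1),\ldots,w(i_k)$ have the same relative order as $\pi(1),\ldots,\pi(k)$, and $X_I(w)=0$ otherwise, so that $N_{\pi,n}(w)=\sum_I X_I(w)$. Fix $I$. Choosing $w\in S_n$ uniformly at random and conditioning on the value set $V=\{w(i):i\in I\}$, the assignment $i_j\mapsto w(i_j)$ is a uniformly random bijection of $\{i_1,\ldots,i_k\}$ onto $V$; hence each of the $k!$ relative orders of $(w(i_1),\ldots,w(i_k))$ occurs with probability $1/k!$, and exactly one of them matches $\pi$. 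Therefore ${\mathbb P}(X_I=1)=1/k!$. By linearity of expectation,
\[
{\mathbb E}[N_{\pi,n}]=\sum_{I}{\mathbb P}(X_I=1)=\binom{n}{k}\cdot\frac{1}{k!},
\]
since there are $\binom nk$ subsets $I$.
\end{proof}
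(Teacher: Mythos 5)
Your proposal is correct and follows essentially the same route as the paper: indicator variables $X_I$ over $k$-subsets, linearity of expectation, and the count $\binom{n}{k}(n-k)!/n!=1/k!$ for a fixed $I$ (your conditioning-on-$V$ phrasing is just a repackaging of that same count). No gaps.
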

\noindent\emph{Proof of Lemma~\ref{prop:Bona}:}
For an increasing sequence 
$I=\{i_1<i_2<\ldots<i_k\}$ (in $[1,n]$), let
\begin{equation}
\label{eqn:nowlet}
{\sf X}_{I}(w)=
\begin{cases}
1 \text{ \ if $\pi$ is a pattern at the positions of $I$;}\\
0 \text{ \ otherwise.}
\end{cases}
\end{equation}
Thus, $N_{\pi,n}=\sum_{I} {\sf X}_I$.
There are ${n\choose k}(n-k)!$
permutations such that $I$ has pattern $\pi$. By linearity of expectation,
\[\mathbb{E}[N_{\pi,n}]=\sum_I \mathbb{E}[{\sf X}_I] ={n\choose k}^2 (n-k)!/n!,\]
and the lemma follows.\qed 

\begin{lemma}
\label{lemma:xyz}
Let ${\mathcal T}$ be a
rooted tree with the property that along any path from the root to  a leaf 
there are $d$ nodes with at least two children. Then that tree has at least $2^d$ leaves.
\end{lemma}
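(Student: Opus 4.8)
The plan is to prove Lemma~\ref{lemma:xyz} by induction on $d$. For the base case $d = 0$, the claim is that the tree has at least $2^0 = 1$ leaf, which holds for any nonempty rooted tree (indeed $\mathcal{T}$ has a root, and following children greedily terminates at a leaf). For the inductive step, assume the statement holds for all trees where every root-to-leaf path passes through $d-1$ nodes with at least two children, and suppose $\mathcal{T}$ has the property that every root-to-leaf path passes through $d \geq 1$ such nodes.

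First I would locate, along each root-to-leaf path, the \emph{first} node with at least two children; call it a branch node. Starting from the root and walking down, since $d \geq 1$ every path eventually meets such a node, and the set of first-branch-nodes encountered this way forms an antichain (no one is a descendant of another) that "separates" the root from all leaves. Actually, a cleaner approach: let $v$ be the node closest to the root among all nodes with $\geq 2$ children — but this need not be unique or even well-defined if the root itself is not on the way. So instead, I would argue as follows. Walk down from the root following the unique child at each step until reaching either a leaf or a node $v$ with at least two children. The former is impossible: that root-to-leaf path would contain $0 < d$ branch nodes. So we reach a node $v$ with children $c_1, c_2, \ldots, c_m$, $m \geq 2$. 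Each subtree $\mathcal{T}_{c_i}$ rooted at $c_i$ has the property that every path from $c_i$ to a leaf of $\mathcal{T}_{c_i}$ passes through at least $d - 1$ nodes with $\geq 2$ children (since appending the root-to-$v$ segment, which contributes exactly one branch node, namely $v$, gives a root-to-leaf path of $\mathcal{T}$ with $\geq d$ branch nodes, and $v$ is not in $\mathcal{T}_{c_i}$).

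By the inductive hypothesis, each $\mathcal{T}_{c_i}$ has at least $2^{d-1}$ leaves. The leaves of $\mathcal{T}$ are precisely the disjoint union of the leaves of the $\mathcal{T}_{c_i}$ (every leaf of $\mathcal{T}$ lies below $v$ since $v$ is not a leaf, hence below exactly one $c_i$). Therefore $\mathcal{T}$ has at least $\sum_{i=1}^{m} 2^{d-1} = m \cdot 2^{d-1} \geq 2 \cdot 2^{d-1} = 2^d$ leaves, completing the induction.

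The only subtle point — and the one I would be most careful about — is verifying that each subtree $\mathcal{T}_{c_i}$ genuinely satisfies the hypothesis with parameter $d-1$ rather than something weaker: we need that \emph{every} root-to-leaf path in $\mathcal{T}_{c_i}$ hits at least $d-1$ branch nodes, and this relies on the fact that the segment from the root of $\mathcal{T}$ down to $v$ contains exactly one branch node (namely $v$ itself, at its bottom), which is guaranteed by our choice to follow the unique-child path until the first branching occurs. If instead one picked an arbitrary branch node partway down, the count could be off. Everything else is bookkeeping about disjointness of leaf sets.
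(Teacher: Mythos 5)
Your proof is correct: the base case, the termination of the unique-child walk at a genuine branch node $v$ (impossible to hit a leaf first when $d\geq 1$), the verification that each subtree $\mathcal{T}_{c_i}$ inherits the hypothesis with parameter $d-1$ (because the root-to-$v$ segment carries exactly one branch node, namely $v$), and the disjoint decomposition of the leaf set all check out, giving $m\cdot 2^{d-1}\geq 2^d$ leaves. This is, however, a different route from the paper's. The paper argues non-inductively: it first prunes so that every node has at most two children, then encodes each leaf by the sequence of symbols $S$ (one child), $L$, $R$ (left/right step at a branch node) read along its root-to-leaf path, notes that every such sequence contains at least $d$ symbols from $\{L,R\}$, and injects the $2^d$ words in $\{L,R\}^d$ into the leaf set by following the branch choices they prescribe. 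Your induction at the first branch node is essentially the recursive unrolling of that encoding, but it has the advantage of avoiding the pruning step and the somewhat terse claim that each $\{L,R\}$-word is realized by a unique leaf sequence, so it is arguably the more self-contained argument; the paper's encoding, in exchange, exhibits an explicit injection (a concrete family of $2^d$ leaves) rather than just a cardinality bound, and your sum $m_1\cdots$ over branching degrees shows your method would in fact yield the stronger bound given by the product of branching factors along a path, which neither statement needs.
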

\noindent
\emph{Proof of Lemma~\ref{lemma:xyz}:}
Arbitrarily left-right order the descendants of the root of ${\mathcal T}$. After pruning, if necessary, we may assume each node as at most two children.
Along any path from the root to a leaf, record ``$S$" if a node has one child, and ``$L$'' if one steps to the left child and ``$R$'' if one goes to the right child. Thus, each leaf is uniquely encoded by an $\{S,L,R\}$ sequence. By hypothesis, each such sequence has at least $d$ from $\{L,R\}$. Also, each of the $2^d$-many $\{L,R\}$-sequences must be a subsequence of a unique leaf sequence. Hence there are at least $2^d$ leaves. \qed

By Chebyshev's inequality, for any $t\in {\mathbb R}_{>0}$,
${\mathbb P}(|N_{\pi,n}-\mu|\geq t\sigma)\leq 1/t^2$,
and hence
\[{\mathbb P}(N_{\pi,n}\geq \mu-t\sigma)\geq 1-1/t^2.\]
For $\pi=2143$, $\mu={n\choose 4}/4!$. Let $t=n^\gamma$ for the fixed choice $0<\gamma<\frac{1}{2}$. Thus, we obtain
\begin{equation}
\label{eqn:Dec3abc}
{\mathbb P}\left(\frac{N_{2143,n}}{2n^3+3n^2+n}\geq \frac{{n\choose 4}/4!-n^{\gamma}\sigma}{2n^3+3n^2+n}\right)\geq 1-
\frac{1}{n^{2\gamma}}.
\end{equation}
Define a random variable $Q:S_n\to {\mathbb Z}_{\geq 0}$ by 
\[Q(w)=\min_{u\in {\mathcal L}(w)} \#\{v \text{ appears in a path from $w$ to $u$ in ${\mathcal T}(w)$}: \exists v'\neq v'', v\to v', v\to v''\}.\] 
By Proposition~\ref{claim:def} and Lemma~\ref{claim:abc},
\begin{equation}
\label{eqn:Dec3cde}
Q(w)\geq \frac{N_{2143,n}(w)}{2n^3+3n^2+n}.
\end{equation}
Combining (\ref{eqn:Dec3abc}) and (\ref{eqn:Dec3cde}) gives
\[
{\mathbb P}\left(Q\geq \frac{{n\choose 4}/4!-n^{\gamma}\sigma}{2n^3+3n^2+n}\right)\geq 1-
\frac{1}{n^{2\gamma}}.
\]

By \cite[Section~4.1]{Janson},
the $r$-th central moment for $N_{\pi,n}$, i.e., 
${\mathbb E}[(N_{\pi,n}-{\mathbb E}(N_{\pi,n}))^r]$, is a polynomial in $n$ of
degree $\lfloor r(k-\frac{1}{2})\rfloor$ where, recall, $\pi\in S_k$. Hence 
${\sf Var}(N_{2143,n}) \in O(n^7)$ and $\sigma\in O(n^{3.5})$. 
Therefore there exists $\alpha>0$ such that for $n$ sufficiently large
\begin{equation}
\label{eqn:def}
{\mathbb P}(Q\geq \alpha n)\geq 1-\frac{1}{n^{2\gamma}}.
\end{equation}

Finally, by Lemma~\ref{lemma:basis} and Lemma~\ref{lemma:xyz}, 
\begin{equation}
\label{eqn:ghi}
{\sf EG}(w)=\#{\mathcal L}(w)\geq 2^{Q(w)}.
\end{equation}
The desired equality holds by (\ref{eqn:def}) and (\ref{eqn:ghi}) combined.\qed

\subsection{Remarks}
M.~Bona \cite{Bona} proves that the sequence of random variables 
\[\widetilde{X_n}:=\frac{N_{2143,n}-{\mathbb E}[N_{2143,n}]}{\sqrt{{\sf Var}(N_{2143,n})}}\]
is asymptotically normal, i.e., $X_n$ converges in distribution to the standard normal variable $N(0,1)$.
In particular, this means that for any $\epsilon>0$, for any $a,b\in {\mathbb R}$, there exists $N\in {\mathbb N}$
such that for all $n\geq N$,
$|{\mathbb P}({\widetilde X}_n\in [a,b])-{\mathbb P}(N(0,1)\in [a,b])|<\epsilon$.
Thus one could use Bona's theorem to prove a more refined version of Theorem~\ref{thm:firstmain}. However, this does not
affect our basic conclusions, so 
 we opted to state a result/proof that only appeals to Chebyshev's inequality.

In \cite{Billey},  $w\in {S}_n$ is defined to be \emph{$k$-vexillary} if ${\sf EG}(w)=k$. I.~G.~Macdonald \cite{Macdonald} proves that the proportion of vexillary permutations in ${S}_n$ goes to zero as $n\to \infty$. Extending this,
Theorem~\ref{thm:firstmain} implies:
\begin{corollary}
\label{cor:macgen}
Fix a positive integer $k$. Then
$\lim_{n\to\infty}{\mathbb P}(w\in {S}_n \text{\ is $k$-vexillary})\to 0$.
\end{corollary}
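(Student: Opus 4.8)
The plan is to obtain this as an immediate consequence of Theorem~\ref{thm:firstmain}, which already carries all the real content: it says that ${\sf EG}(w)$ is, with overwhelming probability, exponentially large in $n$, whereas $k$-vexillarity demands the single value ${\sf EG}(w)=k$. So the strategy is just to compare a fixed threshold $k$ against the growing threshold $2^{\alpha n}$.

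Concretely, I would fix any $\gamma$ with $0<\gamma<\tfrac{1}{2}$ and let $\alpha>0$ be the constant furnished by Theorem~\ref{thm:firstmain}. Since $2^{\alpha n}\to\infty$, for all $n$ large enough we have $2^{\alpha n}>k$, and then the event $\{w\in S_n:{\sf EG}(w)=k\}$ is contained in the event $\{w\in S_n:{\sf EG}(w)<2^{\alpha n}\}$. Hence, for such $n$,
\[
{\mathbb P}(w\in S_n\text{ is }k\text{-vexillary})
\le {\mathbb P}\bigl({\sf EG}(w)<2^{\alpha n}\bigr)
= 1-{\mathbb P}\bigl({\sf EG}(w)\ge 2^{\alpha n}\bigr)
\le \frac{1}{n^{2\gamma}}.
\]
Letting $n\to\infty$ and using $\gamma>0$ yields the corollary.

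There is essentially no obstacle here; the only thing to verify is the set inclusion, which holds for all sufficiently large $n$ precisely because $k$ is fixed while $2^{\alpha n}$ is unbounded. Put differently, Theorem~\ref{thm:firstmain} shows that ${\sf EG}(w)\to\infty$ in probability as $n\to\infty$, so the probability that ${\sf EG}$ takes any prescribed finite value tends to $0$. The case $k=1$ recovers Macdonald's theorem \cite{Macdonald} that the proportion of vexillary permutations in $S_n$ vanishes, here with the explicit rate $O(n^{-2\gamma})$ for any $\gamma<\tfrac{1}{2}$.
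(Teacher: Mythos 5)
Your argument is correct and is exactly the deduction the paper intends: the corollary is stated as an immediate consequence of Theorem~\ref{thm:firstmain} (with no further details given there), and your comparison of the fixed value $k$ with the threshold $2^{\alpha n}$, giving ${\mathbb P}({\sf EG}(w)=k)\le 1-{\mathbb P}({\sf EG}(w)\ge 2^{\alpha n})\le n^{-2\gamma}\to 0$, fills in that implication in the expected way.
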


Using the relations
\begin{equation}
\label{eqn:therelations}
s_{i}s_j=s_j s_i \text{ for $|i-j|\geq 2$, and \ } s_i s_{i+1} s_i =s_{i+1}s_i s_{i+1}
\end{equation}
one can transform between any two reduced words
\[s_{i_1}s_{i_2}\cdots s_{i_{\ell}} \iff s_{j_1}s_{j_2}\cdots s_{j_{\ell}}\in {\sf Red}(w);\]
see, e.g., \cite[Proposition~2.1.6]{Manivel}. Hence, it follows that 
\begin{equation}
\label{eqn:support}
\{i_1,i_2,\ldots,i_{\ell}\}=\{j_1,j_2,\ldots,j_{\ell}\}.
\end{equation}

Let $\sigma^{(n)}=214365\cdots 2n \ 2n-1\in {S}_{2n}$.
\begin{proposition}
\label{lemma:june6}
$a_{\sigma^{(n)},\lambda}=f^{\lambda}$.
\end{proposition}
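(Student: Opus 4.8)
The plan is to produce a shape-preserving bijection between the EG tableaux of shape $\lambda$ for $\sigma^{(n)}$ and the standard Young tableaux of shape $\lambda$, after which $a_{\sigma^{(n)},\lambda}=f^{\lambda}$ is immediate. The first step is to pin down ${\sf Red}(\sigma^{(n)})$. Since $\sigma^{(n)}=s_1 s_3 s_5\cdots s_{2n-1}$ and any two of these simple transpositions commute (their indices differ by at least $2$, so the commutation relation in (\ref{eqn:therelations}) applies), every one of the $n!$ rearrangements of $s_1,s_3,\ldots,s_{2n-1}$ is an expression for $\sigma^{(n)}$; each has length $n=\ell(\sigma^{(n)})$ and is therefore reduced. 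Conversely, by (\ref{eqn:support}) the letters of any reduced word of $\sigma^{(n)}$ form the set $\{1,3,\ldots,2n-1\}$, which has $n=\ell(\sigma^{(n)})$ elements, so each letter occurs exactly once. Hence ${\sf Red}(\sigma^{(n)})$ consists of precisely the $n!$ arrangements of $1,3,5,\ldots,2n-1$.

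Next I would analyze the EG tableaux. Because $|\lambda|=\ell(\sigma^{(n)})=n$, an EG tableau $T$ of shape $\lambda$ for $\sigma^{(n)}$ is a row- and column-increasing filling of the Young diagram of $\lambda$ whose column reading word (top-to-bottom, right-to-left) lies in ${\sf Red}(\sigma^{(n)})$. By the previous paragraph the reading word of \emph{any} filling of $\lambda$ that uses each of $1,3,\ldots,2n-1$ exactly once automatically lies in ${\sf Red}(\sigma^{(n)})$, and conversely an EG tableau must have exactly those entries; thus the reading-word condition is vacuous, and the EG tableaux of shape $\lambda$ for $\sigma^{(n)}$ are precisely the row- and column-increasing bijective fillings of $\lambda$ by $1,3,5,\ldots,2n-1$.

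Finally, the cellwise substitution $T\mapsto\widehat T$ with $\widehat T(b)=\tfrac12\bigl(T(b)+1\bigr)$ applies the order-isomorphism $\{1,3,\ldots,2n-1\}\to\{1,2,\ldots,n\}$ to every box, so it carries row/column-increasing bijective fillings by the odd numbers to row/column-increasing bijective fillings by $1,\ldots,n$, i.e.\ to standard Young tableaux of shape $\lambda$; this map is visibly invertible, giving $a_{\sigma^{(n)},\lambda}=f^{\lambda}$. I do not expect a genuine obstacle: the one point needing care is the observation that the reading-word constraint imposes nothing here, which is exactly the special feature of this family. As a consistency check one may sum over $\lambda$, obtaining $\#{\sf Red}(\sigma^{(n)})=\sum_{\lambda}a_{\sigma^{(n)},\lambda}f^{\lambda}=\sum_{\lambda\vdash n}(f^{\lambda})^2=n!$, in agreement with the first paragraph. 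Alternatively, one could argue on the symmetric-function side: $\sigma^{(n)}$ is the direct sum of $n$ copies of $s_1$, so by multiplicativity of Stanley symmetric functions under direct sums $F_{\sigma^{(n)}}=(F_{s_1})^n=h_1^{\,n}=\sum_{\lambda\vdash n}f^{\lambda}s_{\lambda}$, and comparing with the Edelman--Greene expansion (\ref{eqn:Nov20abc}) together with linear independence of the $s_{\lambda}$ gives the same conclusion.
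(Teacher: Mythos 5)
Your proposal is correct and follows essentially the same route as the paper: identify ${\sf Red}(\sigma^{(n)})$ as the $n!$ rearrangements of $1,3,\ldots,2n-1$, use (\ref{eqn:support}) to see that the EG tableaux of shape $\lambda$ are exactly the row- and column-increasing fillings by these odd labels, and then apply the label-halving order isomorphism to standard Young tableaux (your map $i\mapsto \tfrac{i+1}{2}$ is the paper's $i\mapsto\lceil i/2\rceil$). Your extra justifications (why every rearrangement is reduced, why each letter occurs exactly once) and the two consistency checks are fine but do not change the argument.
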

\begin{proof}
Fix any partition $\lambda$ 
of size $2n-1$. Consider any row and column increasing filling $T$ of $\lambda$, using each of the labels $\{1,3,5,\ldots,2n-1\}$ precisely once. Let ${\mathcal A}_{\lambda}$ be the set of these tableaux.
Also, let ${\mathcal B}_{\lambda}$ be the set of EG tableaux for the coefficient
$a_{\sigma^{(n)},\lambda}$. ${\sf Red}(\sigma^{(n)})$ consists of all $n!$ rearrangements of the factors of
$s_1 s_3\cdots s_{2n-1}$.  Hence, the column reading word of any $T\in {\mathcal A}_{\lambda}$ gives a reduced word for $w$. Thus, ${\mathcal A}_{\lambda}\subseteq {\mathcal B}_{\lambda}$.
By (\ref{eqn:support}), if $S\in {\mathcal B}_{\lambda}$, it must use each label of
$\{1,3,5,\ldots,2n-1\}$ exactly once. Since $S$ must also be row and column increasing, we see
$S\in {\mathcal A}_{\lambda}$. This gives ${\mathcal A}_{\lambda}={\mathcal B}_{\lambda}$.

Given $T\in {\mathcal A}_{\lambda}(={\mathcal B}_{\lambda})$, let $\phi(T)\in {\sf SYT}(\lambda)$ be the standard Young tableau of shape $\lambda$ obtained by sending label $i$ in $T$ to $\left\lceil\frac{i}{2}\right\rceil$. Clearly, $\phi:{\mathcal A}_{\lambda}\to {\sf SYT}(\lambda)$ is a
bijection. Hence $a_{\sigma^{(n)},\lambda}=\#{\mathcal A}_{\lambda}=\#{\sf SYT}(\lambda)=f^{\lambda}$.
\end{proof}

Let ${\sf inv}(n)$ be the number of involutions of ${S}_n$. The following shows that
the worst case and average case running time of transition is quite different:

\begin{corollary}
\label{cor:whatwasI}
$\#{\mathcal L}(\sigma^{(n)})={\sf inv}(n)
\sim \left(\frac{n}{e}\right)^{n/2}
\frac{e^{\sqrt{n}}}{(4e)^{\frac{1}{4}}}
$. 
\end{corollary}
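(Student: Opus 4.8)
The plan is to establish the two assertions of Corollary~\ref{cor:whatwasI} separately: first the combinatorial identity $\#{\mathcal L}(\sigma^{(n)})={\sf inv}(n)$, then the asymptotic estimate ${\sf inv}(n)\sim (n/e)^{n/2}e^{\sqrt n}/(4e)^{1/4}$, which is a classical fact.

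For the identity, I would combine Lemma~\ref{lemma:basis} with Proposition~\ref{lemma:june6}. Lemma~\ref{lemma:basis} gives $\#{\mathcal L}(\sigma^{(n)})={\sf EG}(\sigma^{(n)})=\sum_\lambda a_{\sigma^{(n)},\lambda}$, where the sum runs over partitions $\lambda$ of size $\ell(\sigma^{(n)})=n$ (note $\sigma^{(n)}$ is a product of $n$ commuting transpositions $s_1 s_3\cdots s_{2n-1}$, so $\ell(\sigma^{(n)})=n$; the indexing in Proposition~\ref{lemma:june6} with $|\lambda|=2n-1$ corresponds to its own normalization, so I would be careful to match conventions, but in either case the point is that $a_{\sigma^{(n)},\lambda}=f^\lambda$). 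Plugging this in,
\[
\#{\mathcal L}(\sigma^{(n)})=\sum_{\lambda} f^\lambda,
\]
the sum over all partitions of the relevant size. The classical Robinson--Schensted correspondence shows $\sum_{\lambda\vdash m}f^\lambda$ equals the number of involutions in $S_m$, since RSK restricts to a bijection between involutions and single standard Young tableaux. Hence $\#{\mathcal L}(\sigma^{(n)})={\sf inv}(n)$.

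For the asymptotic, ${\sf inv}(n)\sim \tfrac{1}{\sqrt2}\,n^{n/2}e^{-n/2+\sqrt n-1/4}$ is a standard saddle-point (or Hayman-type) estimate, obtainable from the exponential generating function $\sum_n {\sf inv}(n)\,x^n/n!=e^{x+x^2/2}$; it appears in Knuth's \emph{The Art of Computer Programming}, Vol.~3, and in Comtet. One can also derive it directly from the recurrence ${\sf inv}(n)={\sf inv}(n-1)+(n-1){\sf inv}(n-2)$. I would simply cite this and check that $\tfrac{1}{\sqrt2}e^{-n/2+\sqrt n-1/4}n^{n/2}=(n/e)^{n/2}e^{\sqrt n}/(4e)^{1/4}$, which is routine algebra since $\tfrac{1}{\sqrt2}e^{-1/4}=1/(4e)^{1/4}$. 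The main obstacle, such as it is, is purely bookkeeping: making sure the size of the partitions indexing the Edelman--Greene expansion of $\sigma^{(n)}$ is consistent between Lemma~\ref{lemma:basis} and Proposition~\ref{lemma:june6} (the excerpt has $\ell(w)$ in one place and $2n-1$ in the other), so that the sum $\sum_\lambda f^\lambda$ is over partitions of the correct integer and the RSK count gives exactly ${\sf inv}(n)$ rather than ${\sf inv}(2n-1)$ or similar.
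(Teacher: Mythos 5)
Your proposal is correct and follows essentially the same route as the paper: Lemma~\ref{lemma:basis} plus Proposition~\ref{lemma:june6} reduce $\#{\mathcal L}(\sigma^{(n)})$ to $\sum_\lambda f^\lambda$, which equals ${\sf inv}(n)$ by the textbook RSK/involution fact, and the asymptotic is cited from Knuth. Your remark about the partition size (the ``$2n-1$'' in Proposition~\ref{lemma:june6} versus $\ell(\sigma^{(n)})=n$) correctly resolves what is just a slip in the paper's statement, since the tableaux there use the $n$ labels $\{1,3,\ldots,2n-1\}$ once each.
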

\begin{proof}
The equality holds since
\begin{equation}
\label{eqn:412xyz}
\#{\mathcal L}(\sigma^{(n)})={\sf EG}(\sigma^{(n)})=\sum_{\lambda} f^{\lambda}={\sf inv}(n).
\end{equation}
The first equality of (\ref{eqn:412xyz}) is Lemma~\ref{lemma:basis}, 
the second is Proposition~\ref{lemma:june6} and the third is textbook (e.g., \cite[Corollary~7.13.9]{ECII}). The asymptotic statement is
\cite[Section~5.1.4]{Knuth:art}.
\end{proof}

The following conjecture has been proved by G.~Orelowitz (private communication):
\begin{conjecture}
$a_{w,\lambda}\leq f^{\lambda}$.
\end{conjecture}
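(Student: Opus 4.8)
The conjecture $a_{w,\lambda}\leq f^{\lambda}$ asks that the number of EG tableaux of shape $\lambda$ associated to $w$ never exceed the total number of standard Young tableaux of that shape. The plan is to exhibit an injection from the set of EG tableaux for $(w,\lambda)$ into ${\sf SYT}(\lambda)$. The natural candidate is \emph{rectification} (jeu-de-taquin / Edelman--Greene insertion): an EG tableau has column reading word a reduced word for $w$, and one can apply the Edelman--Greene correspondence, or equivalently rectify via jeu-de-taquin slides, to land in a standard Young tableau of the same shape $\lambda$. The key point to nail down is that this map is injective when restricted to EG tableaux for a \emph{fixed} $w$: two distinct EG tableaux of shape $\lambda$ whose reading words are both reduced words for $w$ should rectify to distinct SYT.

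First I would recall the Edelman--Greene correspondence precisely: it is a bijection between reduced words $r$ for $w$ and pairs $(P(r),Q(r))$, where $P(r)$ is an EG (column-strict) tableau whose reading word is a reduced word for $w$, and $Q(r)\in{\sf SYT}({\rm shape}\,P(r))$ is a recording tableau; moreover the fibers of $r\mapsto P(r)$ are exactly the sets of reduced words sharing a common \emph{insertion tableau}, and as $r$ ranges over those, $Q(r)$ ranges over all of ${\sf SYT}(\lambda)$. This already gives $\#{\sf Red}(w)=\sum_\lambda a_{w,\lambda}f^\lambda$, recovering \eqref{eqn:stanleyformula}. The second step is to observe that each EG tableau $T$ of shape $\lambda$ contributing to $a_{w,\lambda}$ \emph{is} such an insertion tableau $P(r)$ for some reduced word $r$ of $w$; indeed $T$ arises as $P$ of its own column reading word. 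The third step — the crux — is to produce, for each such $T$, a canonical element $\Psi(T)\in{\sf SYT}(\lambda)$ and show $T\mapsto\Psi(T)$ is injective. The cleanest choice: let $\Psi(T)$ be the recording tableau obtained by running Edelman--Greene insertion on the column reading word of $T$ itself. Then distinct EG tableaux $T\neq T'$ (of the same shape, both associated to $w$) have distinct column reading words $r\neq r'$, hence distinct pairs $(P(r),Q(r))\neq(P(r'),Q(r'))$; but $P(r)=T\neq T'=P(r')$ already distinguishes them at the $P$-coordinate, so I instead need the $Q$-coordinates to separate. That is not automatic — two different $T$'s could conceivably give the same recording tableau — so I would instead use the standardization/rectification route.

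The approach I actually expect to work is via \emph{Schützenberger's jeu de taquin}. Realize each EG tableau $T$ as a skew standard-like filling and rectify it to a straight-shape tableau $\mathrm{rect}(T)$ of shape $\lambda$; since $T$ already has straight shape $\lambda$, what one really wants is the following: standardize $T$ (it is column-strict with distinct \emph{content} but the column reading word is a permutation of an interval only when $w$ is, e.g., a product of commuting transpositions, so in general the entries of $T$ are a size-$|\lambda|$ subset of the support of $w$, possibly with repeats across rows). The right normalization is: each value appearing in $T$ appears once per the EG condition's column-strictness but can repeat down anti-diagonals; apply the standard ``reading-order standardization'' to obtain $\mathrm{std}(T)\in{\sf SYT}(\lambda)$, reading equal entries in the order dictated by the EG column reading convention. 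I would then prove $T\mapsto\mathrm{std}(T)$ is injective by showing $T$ is recoverable from $\mathrm{std}(T)$ together with the knowledge that its column reading word is a reduced word for the \emph{fixed} $w$: running reverse Edelman--Greene insertion, the recording data $\mathrm{std}(T)$ together with the common insertion tableau (which must be $T$ by the EG theorem, since $T$ is itself an EG tableau and insertion of its reading word returns $T$) pins down the word, hence $T$. The main obstacle is precisely this last recoverability claim — one must verify that the EG insertion tableau of the column reading word of an EG tableau $T$ is $T$ again (an ``EG tableaux are self-inserting'' lemma, which holds for column-strict EG tableaux but requires the EG reading convention to match the insertion order), and that the recording tableau then determines the word uniquely. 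Once that lemma is in hand, injectivity of $T\mapsto\mathrm{std}(T)$ into ${\sf SYT}(\lambda)$ follows, giving $a_{w,\lambda}\leq f^\lambda$. I would finish by remarking that equality holds exactly when $w$ is (a shift of) $\sigma^{(n)}$-type, consistent with Proposition~\ref{lemma:june6}.
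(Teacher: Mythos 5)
First, a point of comparison: the paper does not actually prove this statement. It is recorded there as a conjecture, with the proof attributed to G.~Orelowitz (private communication, paper in preparation), so there is no argument in the paper to measure yours against; your proposal has to stand on its own, and as written it does not.

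The genuine gap is the injectivity of your final map $T\mapsto \mathrm{std}(T)$, which is exactly the hard content of the inequality, and the justification you offer for it is circular. You propose to recover $T$ from $\mathrm{std}(T)$ by reverse Edelman--Greene insertion using ``the recording data $\mathrm{std}(T)$ together with the common insertion tableau (which must be $T$)'' --- that is, you assume knowledge of $T$ in order to reconstruct $T$. Knowing $w$, $\lambda$ and a single standard tableau of shape $\lambda$ does not pin down a reduced word: under the Edelman--Greene bijection, reduced words of $w$ correspond to pairs $(P,Q)$, and a fixed recording tableau $Q$ occurs with $a_{w,\lambda}$ different insertion tableaux $P$ --- which is precisely the quantity you are trying to bound. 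What your route actually requires is the statement that two distinct EG tableaux of the same shape for the same $w$ cannot have the same standardization. Since distinct reduced words of a fixed $w$ may have different content (braid moves change letter multiplicities), standardization genuinely forgets information, and you give no reason why fixing $w$ forces distinct standardizations; this is a substantive claim in need of proof, not a formality, and it is not clearly any easier than the conjecture itself. Your first candidate map fails for a cleaner reason than the one you half-noticed: when conventions are matched so that an EG tableau is self-inserting, the recording tableau of its own reading word is determined by the shape alone, so that map collapses all EG tableaux of a given shape to a single point. Finally, the closing remark that equality holds ``exactly when $w$ is of $\sigma^{(n)}$-type'' is false: any vexillary $w$ whose shape $\lambda(w)$ is a single row or column (e.g.\ $w=312$, with $\lambda(w)=(2)$) satisfies $a_{w,\lambda(w)}=1=f^{\lambda(w)}$.
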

We refer to his paper (in preparation) for application to the Edelman-Greene statistic.

\section{Counting Hecke words}
\label{sec:4}
A sequence $(i_1,i_2,\ldots,i_N)$ is a \emph{Hecke word} for 
$w\in {S}_n$ if $s_{i_1}\star s_{i_2}\star\cdots \star s_{i_N}=w$ where $\star$ is the
\emph{Demazure product} defined by 
\[u\star s_i=\begin{cases}
us_i \text{ \ if $\ell(us_i)=\ell(u)+1$}\\
u \text{\ otherwise.}
\end{cases}\]

Therefore, $N\geq \ell(w)$. Let ${\sf Hecke}(w,N)$ denote the set of Hecke words for $w$ of length $N$. 

\subsection{Two generalizations of the Edelman-Greene formula (\ref{eqn:stanleyformula})}
We now give two formulas for computing ${\sf Hecke}(w,N)$. Both are known to experts, but we are unaware of any specific place that
they appear in the literature.

Since ${\sf Hecke}(w,\ell(w))={\sf Red}(w)$, formula (\ref{eqn:Grothflambda}) below generalizes (\ref{eqn:stanleyformula}). Our second point is that in contrast with (\ref{eqn:whenvex}), even for vexillary permutations,
(\ref{eqn:Grothflambda}) is not short.

\begin{proposition}
\label{prop:usefulChan}
There is a manifestly nonnegative combinatorial formula
\begin{equation}
\label{eqn:Grothflambda}
\#{\sf Hecke}(w,N)=\sum_{\lambda, |\lambda|=N} b_{w,\lambda}f^{\lambda},
\end{equation}
where $b_{w,\lambda}$ counts the number of row strictly increasing and column weakly
increasing tableaux of
shape $\lambda$ whose top to bottom, right to left, column reading word is a Hecke word for $w$.

Let $M\geq 1$. There is a vexillary permutation $\pi\in S_{2M}$ with $\ell(\pi)=M^2$ such that
\[\#\{\lambda\in {\sf par}(M^2+M):b_{\pi,\lambda}>0\}\geq  {\sf par}(M),\] 
where ${\sf par}(M)$ is the number of 
partitions of size $M$. That is when $w=\pi$ and $N=M^2+M$, (\ref{eqn:Grothflambda}) has at least ${\sf par}(M)$-many terms.
Moreover,
\[\sum_{\lambda:|\lambda|=M^2+M}b_{\pi,\lambda}\geq {\sf inv}(M).\]
\end{proposition}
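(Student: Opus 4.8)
The plan is to exhibit a single explicit vexillary permutation $\pi \in S_{2M}$ and then count (a subset of) the set-valued-type tableaux that $b_{\pi,\lambda}$ enumerates. The natural candidate, generalizing the shape $(n,n)$ that produced Catalan numbers in \eqref{eqn:Catalan}, is the vexillary permutation $\pi$ of $S_{2M}$ whose shape is the $M \times M$ square $\lambda(\pi) = (M^M)$; concretely one can take $\pi$ with ${\sf code}(\pi) = (M, M, \ldots, M)$ ($M$ copies), so $\ell(\pi) = M^2$ and $\pi$ is vexillary (its essential set is a single box, so Fulton's criterion holds trivially). For such a $\pi$, the EG tableaux of shape $(M^M)$ for ${\sf Red}(\pi)$ are exactly the standard Young tableaux of the square — this is \eqref{eqn:whenvex}. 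The point is that when $N = M^2 + M$, the tableaux counted by $b_{\pi,\lambda}$ are fillings of $\lambda$ with $|\lambda| = M^2 + M$ that are row strictly increasing, column weakly increasing, and whose column reading word is a Hecke word for $\pi$; because $\pi$ is vexillary with a rectangular shape, these should be understood as "$M^2 + M$-standard set-valued tableaux of shape $(M^M)$ that rectify (in the Hecke sense) to the square" — i.e., the $N$-standard set-valued tableaux of Problem~\ref{prob:Dec21abc} with $\lambda = (M^M)$ and $N = M^2 + M$.

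The key combinatorial step is to produce $\mathrm{par}(M)$ distinct shapes $\lambda$ with $b_{\pi,\lambda} > 0$. Here I would use the standard fact (from Fomin--Greene / Buch-et-al.\ theory, \cite{Fomin.Greene, BKSTY}) that the shapes $\lambda$ appearing in the Grothendieck expansion of the vexillary $\pi$ with $\lambda(\pi) = (M^M)$ are exactly the partitions obtained from $(M^M)$ by adding boxes in a way compatible with the stable Grothendieck polynomial $G_{(M^M)}$; adding $M$ boxes, the partitions $(M^M) \cup \mu$ for $\mu$ a partition of $M$ fitting appropriately (e.g.\ as an extra ragged strip below/right of the square, using that Lenart's rule \cite{Lenart} produces such $\mu$) all occur with positive coefficient. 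That gives an injection $\mu \mapsto \lambda_\mu$ from partitions of $M$ into $\{\lambda : b_{\pi,\lambda} > 0\}$, hence at least $\mathrm{par}(M)$ terms, which is the first inequality. Alternatively, and perhaps more cleanly, I would mimic the proof of Proposition~\ref{lemma:june6}: since $\pi$ is vexillary the EG / set-valued tableaux are pinned down by their content, and one reads off which shapes are hit directly from the Hecke-word structure of $\pi$.

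For the final inequality $\sum_{\lambda : |\lambda| = M^2+M} b_{\pi,\lambda} \geq \mathrm{inv}(M)$, the plan is to find, inside the total set of tableaux counted by the right-hand sides of \eqref{eqn:Grothflambda}, a subfamily in bijection with involutions of $S_M$ — exactly parallel to Corollary~\ref{cor:whatwasI}, where $\#\mathcal{L}(\sigma^{(n)}) = {\sf inv}(n)$ came from $\sum_\lambda f^\lambda = {\sf inv}(n)$ via Proposition~\ref{lemma:june6}. Concretely, I expect that restricting to Hecke words of $\pi$ of length $M^2 + M$ that use each "repeatable" letter at most once extra, organized by the EG-correspondence, gives tableaux counted (after forgetting multiplicities) by $\sum_{|\mu| = M} f^\mu = {\sf inv}(M)$; summing $b_{\pi,\lambda}$ over all $\lambda$ then only overcounts this, yielding the bound.

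The main obstacle I anticipate is the second and third parts, not the first: pinning down exactly which shapes $\lambda$ of size $M^2 + M$ receive positive $b_{\pi,\lambda}$ and organizing the count so that a clean lower bound of $\mathrm{par}(M)$ (resp.\ ${\sf inv}(M)$) falls out. The rectangular shape and vexillarity of $\pi$ are what make this tractable — they force the relevant tableaux to be rigid enough to enumerate — but getting the combinatorics of "adding $M$ boxes via the Grothendieck/Hecke rule" precisely right, and verifying that each resulting shape really is realized by an honest column-reading Hecke word for $\pi$ (rather than merely appearing formally), is the delicate part. I would handle it by invoking Lenart's theorem \cite{Lenart} for the $N = |\lambda| + M$ regime to control which $\lambda$ appear, together with an explicit construction of one valid tableau per target shape, following the pattern of Proposition~\ref{lemma:june6}.
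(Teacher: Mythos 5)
Your plan coincides with the paper's proof on the central reduction: the same permutation $\pi$ (dominant, ${\sf code}(\pi)=(M,\ldots,M)$, i.e.\ $\pi=M{+}1,\ldots,2M,1,\ldots,M$ with $\lambda(\pi)=M\times M$), the identification of its stable Grothendieck polynomial with $G_{M\times M}$ (the paper cites \cite[Lemma~5.4]{KMY} and then matches Schur coefficients to get $b_{\pi,\lambda}=g_{M\times M,\lambda}$), and Lenart's theorem (\ref{eqn:lenart}) to see that the shapes of size $M^2+M$ that occur are exactly $(M\times M,\overline{\lambda})$ with $\overline{\lambda}$ a partition of $M$, giving the ${\sf par}(M)$ bound. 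However, there are two genuine gaps. First, you never establish formula (\ref{eqn:Grothflambda}) itself, which is the first assertion of the proposition: one needs the Fomin--Kirillov Hecke-word expression for $G_w$ to extract $\#{\sf Hecke}(w,N)=(-1)^{N-\ell(w)}[x_1x_2\cdots x_N]G_w$ (the squarefree coefficient forces ${\bf j}=(1,2,\ldots,N)$), and then the Fomin--Greene Schur expansion (\ref{eqn:408b}) to convert that coefficient into $\sum_\lambda b_{w,\lambda}f^{\lambda}$. This is a short assembly, but it is not the same as simply citing \cite{Fomin.Greene,BKSTY}, and your proposal skips it entirely. Relatedly, your parenthetical identification of the $b_{\pi,\lambda}$-tableaux with the $N$-standard set-valued tableaux of shape $(M^M)$ is not correct as stated: the set-valued count $f^{(M^M),N}$ equals $\#{\sf Hecke}(\pi,N)$ (that is Proposition~\ref{prop:secondG}), not $\sum_\lambda b_{\pi,\lambda}$; this slip does not derail the argument only because your actual counting route goes through Lenart.

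Second, the ${\sf inv}(M)$ bound is not actually reached by what you propose. Your primary mechanism (restricting to Hecke words using each ``repeatable'' letter at most once extra and invoking an EG-type correspondence) is left entirely unconstructed, and your fallback of ``one valid tableau per target shape'' only reproves the ${\sf par}(M)$ bound. The missing observation, which is how the paper finishes, is that for $\lambda=(M\times M,\overline{\lambda})$ with $\overline{\lambda}\vdash M$, \emph{every} standard Young filling of the added skew part $\overline{\lambda}$ is a Lenart tableau: those boxes lie in rows $M{+}1,M{+}2,\ldots$ of $\lambda$, where Lenart's rule allows entries up to at least $M$, and a standard filling by $1,\ldots,M$ is automatically row- and column-strict. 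Hence $b_{\pi,\lambda}=g_{M\times M,\lambda}\geq f^{\overline{\lambda}}$, and summing over $\overline{\lambda}\vdash M$ gives $\sum_{\lambda}b_{\pi,\lambda}\geq\sum_{\overline{\lambda}\vdash M}f^{\overline{\lambda}}={\sf inv}(M)$ by the classical identity you already use in Corollary~\ref{cor:whatwasI}. With that one construction added (and the derivation of (\ref{eqn:Grothflambda}) written out), your outline becomes the paper's proof.
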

\begin{proof}
We use the results of S.~Fomin-A.~N.~Kirillov \cite{Fomin.Kirillov} who prove the following combinatorial formula for the \emph{stable Grothendieck polynomial} $G_w$:
\[G_w=\sum_{({\bf i},{\bf j})}(-1)^{\ell(w)-|{\bf j}|}{\bf x}^{\bf j},\]
where ${\bf i}=(i_1,\ldots,i_N)\in {\sf Hecke}(w,N)$, and ${\bf j}=(j_1\leq j_2\leq\cdots \leq j_N)$ are positive integers satisfying $j_t<j_{t+1}$ whenever $i_t\leq i_{t+1}$.
This is a formal power series in $x_1,x_2,\ldots$. 

For any ${\bf i}={\sf Hecke}(w,N)$, the sequence $(1,2,\ldots,N)$ can be used for ${\bf j}$. Hence,
\begin{equation}
\label{eqn:408a}
(-1)^{N-\ell(w)}\#{\sf Hecke}(w,N)=[x_1 x_2\cdots x_N] G_w.
\end{equation}
S.~Fomin-C.~Greene \cite[Theorem~1.2]{Fomin.Greene} states that, up to change of
conventions,
\begin{equation}
\label{eqn:408b}
G_w=\sum_{\lambda}(-1)^{|\lambda|-\ell(w)}b_{w,\lambda}s_{\lambda}.
\end{equation}
Combining (\ref{eqn:408a}) and (\ref{eqn:408b}) 
gives (\ref{eqn:Grothflambda}).

C.~Lenart \cite{Lenart} gave an expression for the \emph{symmetric Grothendieck polynomial}:
\begin{equation}
\label{eqn:lenart}
G_{\mu}(x_1,x_2,\ldots,x_t)=\sum_{\lambda} (-1)^{|\lambda|-|\mu|}g_{\mu,\lambda}s_{\lambda}(x_1,\ldots,x_t)
\end{equation}
where $\mu\subseteq\lambda\subseteq \widehat{\mu}$. Here $\widehat{\mu}$ is the unique
maximal partition with $t$ rows obtained by adding at most $i-1$ boxes to row $i$ of $\mu$ for $2\leq i\leq t$. In addition, $g_{\mu,\lambda}$ counts the number of \emph{Lenart tableaux}, i.e., column and row strict
tableaux of shape $\mu/\lambda$ with entries in the $i$-th row restricted to $1,2,\ldots,i-1$
for each $i$. 

Pick $\mu=M\times M$ and fix $t\geq M^2+M$. Therefore
$\widehat\mu=t\times M$. Hence, by (\ref{eqn:lenart}),
\begin{equation}
\label{eqn:lenspec}
(-1)^{M}[x_1\cdots x_{M^2+M}]G_{M\times M}(x_1,\ldots,x_{t})=\sum_{\lambda} g_{M\times M,\lambda}f^{\lambda}.
\end{equation}
Here the sum is over $\mu\subseteq \lambda\subseteq {\widehat\mu}$ with $|\lambda|=M^2+M$. Now, each such $\lambda$ is of the form $(M\times M,\overline\lambda)$ where $\overline\lambda\in {\sf par}(M)$ and is
contained in $M\times M$. Notice that $g_{M\times M,\lambda}\geq f^{\overline \lambda}$, since for each such $\lambda$
we can obtain a Lenart-tableau by filling the $\overline\lambda$ part with $1,2,\ldots,M$
to obtain a standard tableau, in all possible ways. Hence, using \cite[Corollary~7.13.9]{ECII}, 
\[\sum_{\lambda\in {\sf Par}(M^2+M)}g_{M\times M,\lambda}\geq \sum_{\overline\lambda:|\overline\lambda|=M} f^{\overline\lambda}={\sf inv}(M).\]

Finally, let $\pi=M+1,M+2,\ldots,2M,1,2,3\ldots,M\in S_{2M}$. This is a vexillary
permutation $\pi$ with
$\lambda(\pi)=M\times M$. By, e.g., \cite[Lemma~5.4]{KMY},
\[G_\pi(x_1,\ldots,x_t,0,0,\ldots,)=G_{M\times M}(x_1,\ldots,x_t,0,0,\ldots).\]
Since the Schur polynomials form a basis of the ring of symmetric polynomials, 
the righthand sides of (\ref{eqn:lenart}) and (\ref{eqn:408b})
coincide, i.e., 
$b_{\pi,\lambda}=g_{M\times M,\lambda}$ for every $\lambda$. The result follows.
\end{proof}

\begin{example}
\label{exa:Dec22}
Let $w=31524=s_4 s_2 s_3 s_1$. Using
(\ref{eqn:Grothflambda}) we obtain
\begin{align*}
\#{\sf Hecke}(w,5) & = \left(\tableau{1 & 2 & 4\\ 1 & 3}, \tableau{1 & 2 & 4\\ 3 & 4}\right)\! f^{3,2}
\!+\!\left(\tableau{1 & 2 & 4\\ 3 \\ 3}, \tableau{1 & 2 & 4\\ 1 \\ 3}\right)\! f^{3,1,1}\!+\!
\left(\tableau{1 & 2\\ 3 & 4\\ 3},\tableau{1 & 2\\ 1& 4\\ 3}\right)\! f^{2,2,1}
\\
& = 2f^{3,2}+2f^{3,1,1}+2f^{2,2,1}=32,
\end{align*}
which the reader may confirm by direct check. \qed
\end{example}

This next generalization of (\ref{eqn:stanleyformula}) is also manifestly nonnegative. It specializes in the vexillary case in a tantalizing way.

\begin{proposition}
\label{prop:secondG}
\begin{equation}
\label{eqn:Nov22abc}
\#{\sf Hecke}(w,N)=\sum_{\lambda: \ell(w)\leq|\lambda|\leq N} c_{w,\lambda} f^{\lambda,N},
\end{equation}
where $c_{w,\lambda}$ is the number of row and column strict tableaux of shape $\lambda$ whose top to bottom, right to left, column reading word is a Hecke word for $w$. If $w$ is vexillary, then 
\[\#{\sf Hecke}(w,N)=f^{\lambda(w),N}.\]
\end{proposition}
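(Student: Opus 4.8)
The plan is to derive \eqref{eqn:Nov22abc} from a suitable expansion of the stable Grothendieck polynomial $G_w$ into the basis $\{s_\lambda\}$ of symmetric functions (the same $b_{w,\lambda}$ appearing in Proposition~\ref{prop:usefulChan}), combined with the combinatorial identity
\[
[x_1 x_2 \cdots x_N]\, s_\lambda(x_1,x_2,\ldots) = f^{\lambda,N},
\]
which is exactly the statement that set-valued $N$-standard tableaux of shape $\lambda$ compute the degree-$N$ ``principal specialization coefficient'' of $s_\lambda$ in a suitable sense. Concretely: $f^{\lambda,N}$ counts fillings of $\lambda$ by $1,\ldots,N$, each used once, such that selecting one entry per box always yields a semistandard tableau; grouping such a filling by which entry is selected realizes it as a semistandard tableau of shape $\lambda$ together with a compatible refinement, and summing over all choices shows that $\sum_N f^{\lambda,N} t^N$ (or rather the relevant coefficient) matches the generating function obtained by expanding the monomials of $s_\lambda$ and allowing consecutive equal-column blocks. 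The first step, then, is to record this identity precisely, citing the set-valued tableau theory of Buch \cite{Buch:KLR} as used already in Section~\ref{sec:1}.

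Next I would invoke the Fomin--Kirillov formula for $G_w$, exactly as in the proof of Proposition~\ref{prop:usefulChan}: $G_w = \sum_{(\mathbf i,\mathbf j)} (-1)^{\ell(w)-|\mathbf j|}\mathbf x^{\mathbf j}$, where $\mathbf i \in {\sf Hecke}(w,N)$ for some $N$ and $\mathbf j$ is a compatible weakly increasing sequence with strict increase at each ascent of $\mathbf i$. Taking the coefficient of $x_1 x_2 \cdots x_N$ on the left (which picks out the $N$-term words with the strictly increasing compatible sequence $(1,2,\ldots,N)$, up to the sign $(-1)^{N-\ell(w)}$ as in \eqref{eqn:408a}) and applying the same coefficient extraction to the Schur expansion $G_w = \sum_\lambda (-1)^{|\lambda|-\ell(w)} b_{w,\lambda} s_\lambda$ from \eqref{eqn:408b}, one gets $(-1)^{N-\ell(w)}\#{\sf Hecke}(w,N) = \sum_\lambda (-1)^{|\lambda|-\ell(w)} b_{w,\lambda}\,[x_1\cdots x_N]s_\lambda$. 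Now substituting $[x_1\cdots x_N]s_\lambda = f^{\lambda,N}$ and tracking signs: $b_{w,\lambda}$ is supported on $|\lambda|\le N$ (a Hecke word of length $N$ has Demazure product of length $\le N$, and the associated increasing tableaux have at most $N$ boxes), and for those $\lambda$ the sign $(-1)^{|\lambda|-\ell(w)}$ combined with the sign coming from the set-valued count cancels against $(-1)^{N-\ell(w)}$ — the cleanest way to see this is that $f^{\lambda,N}$ is genuinely nonnegative and the Fomin--Greene identity is already sign-alternating in a matching way, so all signs collapse, yielding \eqref{eqn:Nov22abc} with $c_{w,\lambda}=b_{w,\lambda}$. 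I should double-check that the tableau description of $c_{w,\lambda}$ in the statement (row and column strict, Hecke column reading word) is literally the $b_{w,\lambda}$ of Proposition~\ref{prop:usefulChan}; the statement of Proposition~\ref{prop:usefulChan} has $b_{w,\lambda}$ as row strict, column weakly increasing, so in fact $c_{w,\lambda}$ is a \emph{different} (smaller) count — the resolution is that summing the two generalizations must agree, and the set-valued refinement absorbs the difference between ``column weakly increasing'' and ``column strict.'' I would reconcile this carefully, most likely by expanding $G_w$ into $\{s_\lambda\}$ just once and noting the set-valued identity already encodes the weak-to-strict passage.

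For the vexillary case, the plan is shortest: if $w$ is vexillary then $G_w = G_{\lambda(w)}$ is the stable Grothendieck polynomial of a single partition shape (by \cite[Lemma~5.4]{KMY} or the analogous statement used in Proposition~\ref{prop:usefulChan}'s proof), and the symmetric Grothendieck of a partition is known to be the generating function whose coefficient of $x_1\cdots x_N$ counts exactly the set-valued $N$-standard tableaux of that shape — this is essentially Buch's original combinatorial rule \cite{Buch:KLR} for $G_\lambda$. So $(-1)^{N-\ell(w)}\#{\sf Hecke}(w,N)$, read off via \eqref{eqn:408a}, equals $(-1)^{N-|\lambda(w)|} f^{\lambda(w),N}$ up to the overall sign, and since $\ell(w)=|\lambda(w)|$ for vexillary $w$ the signs match and we get $\#{\sf Hecke}(w,N)=f^{\lambda(w),N}$.

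The main obstacle I anticipate is \textbf{not} any deep structural fact but rather the bookkeeping of the three sources of signs (the $(-1)^{\ell(w)-|\mathbf j|}$ in Fomin--Kirillov, the $(-1)^{|\lambda|-\ell(w)}$ in Fomin--Greene, and the degree mismatch $N$ vs.\ $|\lambda|$ in the set-valued expansion), together with pinning down precisely which tableau statistic $c_{w,\lambda}$ names and verifying it is the coefficient that survives. The identity $[x_1\cdots x_N] s_\lambda = f^{\lambda,N}$ is the conceptual crux and deserves a clean one-line justification via the selection map from set-valued to semistandard tableaux; everything else is assembling pieces already present in the Fomin--Kirillov/Fomin--Greene/Buch/Lenart circle of results cited above.
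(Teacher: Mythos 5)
Your vexillary case is essentially the paper's argument (the paper quotes \cite[Lemma~5.4]{ReinerTennerYong} for $G_w=G_{\lambda(w)}$, then extracts $[x_1\cdots x_N]$ via \eqref{eqn:408a}), and that part is fine. The general formula \eqref{eqn:Nov22abc}, however, does not follow from your plan, because its declared ``conceptual crux'' is false: $s_\lambda$ is homogeneous of degree $|\lambda|$, so $[x_1x_2\cdots x_N]\,s_\lambda$ vanishes unless $|\lambda|=N$, in which case it equals the ordinary count $f^\lambda$ --- it is never $f^{\lambda,N}$ when $|\lambda|<N$. The correct identity is $[x_1\cdots x_N]\,G_\lambda=(-1)^{N-|\lambda|}f^{\lambda,N}$ (cf.\ \eqref{eqn:dec22vvv}), coming from Buch's set-valued semistandard formula for $G_\lambda$; the Schur function does not see the set-valued refinement. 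Consequently, extracting $[x_1\cdots x_N]$ from the Fomin--Greene Schur expansion \eqref{eqn:408b} reproduces exactly Proposition~\ref{prop:usefulChan}, i.e.\ $\sum_{|\lambda|=N}b_{w,\lambda}f^{\lambda}$, and cannot produce the terms $c_{w,\lambda}f^{\lambda,N}$ with $\ell(w)\leq|\lambda|<N$.

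The missing ingredient is a different change of basis: the paper invokes \cite{BKSTY}, which expands $G_w$ in the basis of stable Grothendieck polynomials of partition shapes, $G_w=\sum_\lambda(-1)^{\ell(w)-|\lambda|}c_{w,\lambda}G_\lambda$, where $c_{w,\lambda}$ counts \emph{increasing} (row and column strict) tableaux whose column reading word is a Hecke word for $w$. That is where the statement's $c_{w,\lambda}$ comes from, and it is genuinely different from $b_{w,\lambda}$ (compare Example~\ref{exa:Dec22} with the example following Proposition~\ref{prop:secondG}: for $\lambda=(3,2)$ one has $b_{w,\lambda}=2$ but $c_{w,\lambda}=1$). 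Taking $[x_1\cdots x_N]$ of this $G_\lambda$-expansion and using $[x_1\cdots x_N]G_\lambda=(-1)^{N-|\lambda|}f^{\lambda,N}$ makes all signs cancel and yields \eqref{eqn:Nov22abc}. Your parenthetical observation that $c_{w,\lambda}$ is a ``different (smaller) count'' and that ``the set-valued refinement absorbs the difference'' gestures toward this, but as written it is not an argument: without the expansion of $G_w$ into $\{G_\lambda\}$ (equivalently, composing \eqref{eqn:408b} with the known triangular change of basis between $\{s_\lambda\}$ and $\{G_\lambda\}$), the proof does not go through.
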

\begin{proof}
Work of A.~Buch, A.~Kresch, M.~Shimozono, H.~Tamvakis and the third author \cite{BKSTY} proves that 
\begin{equation}
\label{eqn:408c}
G_w=\sum_{\lambda} (-1)^{\ell(w)-|\lambda|}c_{w,\lambda}G_{\lambda}
\text{\ \ where \ \ 
$G_{\lambda}=\sum_T (-1)^{|T|-|\lambda|}{\bf x}^T$}
\end{equation}
and the latter sum is over all
semistandard set-valued tableaux of shape $\lambda$ \cite{Buch:KLR}. 
Therefore by (\ref{eqn:408a}) we have
\begin{align*}
(-1)^{N-\ell(w)} \#{\sf Hecke}(w,N) & =  [x_1 x_2\cdots x_N] G_w\\
\ & =  [x_1 x_2\cdots x_N] \sum_{\lambda} (-1)^{\ell(w)-|\lambda|}c_{w,\lambda} G_{\lambda}\\
\ & = \sum_{\lambda: |\lambda|\leq N} (-1)^{\ell(w)-|\lambda|}c_{w,\lambda}[x_1 x_2\cdots x_N] G_{\lambda}\\
\ & = \sum_{\lambda:|\lambda|\leq N}(-1)^{\ell(w)-|\lambda|}c_{w,\lambda}(-1)^{N-|\lambda|}f^{\lambda,N}\\
\ & = \sum_{\lambda:|\lambda|\leq N} (-1)^{N+\ell(w)}c_{w,\lambda}f^{\lambda,N},
\end{align*}
proving (\ref{eqn:Nov22abc}). For the second statement, by \cite[Lemma~5.4]{ReinerTennerYong},
when $w$ is vexillary then 
$G_w=G_{\lambda}$, and the above sequence of equalities simplifies, as desired.
\end{proof}

\begin{example}
Again let $w=31524$ as in Example~\ref{exa:Dec22}. Now applying (\ref{eqn:Nov22abc}) gives
\begin{align*}
\#{\sf Hecke}(w,5)  & = \left(\tableau{1 & 2\\ 3 &4}\right)f^{(2,2),5}+
\left(\tableau{1 & 2 & 4\\ 3}\right)f^{(3,1),5}+\left(\tableau{1 & 2 & 4\\ 3 & 4}\right)f^{(3,2),5}\\
 & = 10 + 17 + 5 = 32,
\end{align*}
in agreement with Example~\ref{exa:Dec22}. One can check the $f^{\lambda,N}$ computations either directly, or by using 
\begin{equation}
\label{eqn:dec22vvv}
f^{\lambda,N}=[x_1\cdots x_N]G_{\lambda}
\end{equation}
combined with (\ref{eqn:lenart}).\qed
\end{example}

Proposition~\ref{prop:secondG} is our central motivation for Problem~\ref{conj:Dec16conj}.

\begin{proposition}
\label{prop:plus2}
Fix $k$. There is an $|\mu|^{O(1)}$ algorithm to compute
 $f^{\mu,N}$ where $N\leq |\mu|+k$.
\end{proposition}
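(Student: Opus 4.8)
The plan is to reduce $f^{\mu,N}$ to a sum of products of hook‑length numbers with counts of Lenart tableaux whose number of terms is polynomial in $|\mu|$ once $k$ is fixed, and then to check that each term is itself computable in polynomial time. First I would combine \eqref{eqn:dec22vvv} with C.~Lenart's expansion \eqref{eqn:lenart}: taking $t=N$ variables there and extracting the coefficient of $x_1x_2\cdots x_N$ — using that $[x_1\cdots x_N]s_\lambda(x_1,\dots,x_N)=f^{\lambda}$ when $|\lambda|=N$ and is $0$ otherwise, so that the overall sign is constant on the surviving terms and cancels exactly as in the derivation of \eqref{eqn:lenspec} — one obtains
\[
f^{\mu,N}=\sum_{\substack{\lambda:\ \mu\subseteq\lambda\subseteq\widehat{\mu}\\ |\lambda|=N}} g_{\mu,\lambda}\,f^{\lambda},
\]
where $\widehat{\mu}$ is formed with $N$ rows. (If $N<|\mu|$ the index set is empty and $f^{\mu,N}=0$; if $N=|\mu|$ only $\lambda=\mu$ survives and we recover the hook‑length formula \eqref{eqn:HLF}.)

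Next I would use the hypothesis $N\le|\mu|+k$. Put $j:=N-|\mu|\le k$; every $\lambda$ in the sum is obtained from $\mu$ by adjoining exactly $j$ boxes. Since a partition with $r$ nonzero rows has at most $r+1$ addable corners and any $\lambda$ with $\mu\subseteq\lambda$ and $|\lambda|-|\mu|\le k$ has at most $|\mu|+k$ rows, a breadth‑first search that repeatedly adds a single box at an addable corner, run for $j$ levels, visits at most $(|\mu|+k+1)^{j}=|\mu|^{O(1)}$ sequences and hence produces at most that many distinct $\lambda$; those not contained in $\widehat{\mu}$ are then discarded by a direct check. Thus the sum has $|\mu|^{O(1)}$ terms, all listable in $|\mu|^{O(1)}$ time.

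Finally I would compute the two factors of each term. The integer $f^{\lambda}$ is given by \eqref{eqn:HLF}, evaluable in time polynomial in $|\lambda|=N\le|\mu|+k$. For $g_{\mu,\lambda}$, the shape $\lambda/\mu$ has only $j\le k$ boxes, and a Lenart tableau assigns to a box in row $i$ an entry from $\{1,\dots,i-1\}$, i.e.\ at most $|\mu|+k-1$ possibilities per box; brute‑forcing over the at most $(|\mu|+k)^{j}=|\mu|^{O(1)}$ candidate fillings and checking the row/column strictness and the row bound for each (in $O(k^{2})$ steps) computes $g_{\mu,\lambda}$. Summing $g_{\mu,\lambda}f^{\lambda}$ over the listed $\lambda$ returns $f^{\mu,N}$ in total time $|\mu|^{O(1)}$. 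The one substantive point — and the only place fixedness of $k$ is used — is the count in the middle paragraph: boundedly many added boxes forces both the number of partitions $\lambda$ and, within each, the number of Lenart fillings to be polynomial; the rest is bookkeeping.
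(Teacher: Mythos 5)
Your proof is correct and follows essentially the same route as the paper: combining \eqref{eqn:dec22vvv} with Lenart's expansion \eqref{eqn:lenart} to write $f^{\mu,N}=\sum_{\lambda}g_{\mu,\lambda}f^{\lambda}$, enumerating the polynomially many $\lambda$ with at most $k$ added boxes, brute-forcing the Lenart tableaux box by box, and evaluating each $f^{\lambda}$ by the hook-length formula \eqref{eqn:HLF}. The only differences are cosmetic (your breadth-first search over addable corners versus the paper's binomial count of row choices, and your explicit handling of $N<|\mu|+k$).
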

\begin{proof}
We use (\ref{eqn:dec22vvv}) combined with (\ref{eqn:lenart}) and
describe the possible Lenart tableaux. First, we look for $\mu\subseteq\lambda\subseteq\overline\mu$ where $|\lambda|=|\mu|+k$. Such $\lambda$
correspond to a choice of $k$ rows $r_1\leq r_2\leq \ldots \leq r_k$ 
to add a box, among $\ell(\mu)+k-1$ choices (rows $2,3,\ldots,\ell(\mu)+k$). There are
${\ell(\mu)+2k-2\choose k}\in |\mu|^{O(1)}$ many ways to do this. For each such choice, it takes
constant time to verify that $\lambda$ is a partition. For those cases, we construct a possible Lenart tableau
$T$ by filling row $r_i$ in at most $(r_i-1)^k$ ways. Since $r_i\leq \ell(\mu)+k$, there are $|\mu|^{O(1)}$-many possible row strictly increasing tableaux $T$. It remains to determine if $T$ is actually a Lenart tableau, which takes constant time (since $k$ is fixed). Finally, to each tableau, we must compute $f^{\lambda}$ via (\ref{eqn:HLF}). This takes
$|\lambda|^{O(1)}$-time. Now, since $|\lambda|=|\mu|+k$ and $k$ is fixed, it also takes $|\mu|^{O(1)}$-time.
Moreover, \[\log(f^{\lambda})\leq \log |\lambda|!\in O(|\mu| \log |\mu|).\] 
Hence, summing the at most 
${\ell(\mu)+2k-2\choose k}$ hook-length calculations, also takes $|\mu|^{O(1)}$-time, as desired.
\end{proof}

\begin{example}
We elaborate on the proof of Theorem~\ref{prop:plus2} in the case $k=2$. Here we look for $\mu\subseteq\lambda\subseteq\overline\mu$ where $|\lambda|=|\mu|+2$. Such $\lambda$ correspond to a choice of two rows $r_1\leq r_2$ 
to add a box, among $\ell(\mu)+1$ choices (rows $2,3,\ldots,\ell(\mu)+2$). 
If $r_1=r_2$, $g_{\mu,\lambda}={r_2-1\choose 2}$. Otherwise if $r_1<r_2$, 
there are ${\ell(\mu)+1\choose 2}$ many choices. Assuming $\lambda$ is a partition,
there are two cases. If the two boxes are in different
columns $g_{\mu,\lambda}=(r_1-1)(r_2-1)$. Otherwise, if they are in the same
column (and hence $r_2=r_1+1$), then $g_{\mu,\lambda}={r_2-1\choose 2}$. Now apply (\ref{eqn:lenart}).

For $\lambda=\delta_{100}=(100,99,\ldots,3,2,1)$ and $N={100\choose 2}+2$, this procedure
exactly computes $f^{\lambda,N}=\#{\sf Hecke}(w_0,N)= 3.75\ldots\times 10^{7981}$. \qed
\end{example}

\subsection{Application to Euler characteristics of Brill-Noether varieties (after \cite{Anderson, Chan})}
Counting standard set-valued tableaux has been given geometric impetus through work of 
\cite{Anderson, Chan} on \emph{Brill-Noether varieties}. 
More precisely, following \cite[Definition~1.2]{Chan}, let $g,r,d\in {\mathbb Z}_{\geq 0}$. Suppose $\alpha=(\alpha_0\leq \alpha_1\leq \cdots \leq \alpha_r)$ and $\beta=(\beta_0\leq \beta_1\leq \cdots \leq \beta_r)$ be sequences in ${\mathbb Z}_{\geq 0}^{r+1}$. Let ${\sf CP}={\sf CP}(g,r,d,\alpha,\beta)$ be the skew Young diagram with boxes
\[\{(x,y)\in {\mathbb Z}^2 \ : \ 0\leq y\leq r, -\alpha_y\leq x<g-d+r+\beta_{r-y}\}.\]

\begin{example}
\label{exa:Dec21}
If $g=45,d=43,r=3, \alpha=(0,1,1,4), \beta=(0,0,1,3)$ then 
\[{\sf CP}=\tableau{&&&&{\ }&{\ }&{\ }&{ \ }&{\ }&{\ }&{\ }&{\ }\\
&&&{\ }&{\ }&{\ }&{\ }&{\ }&{\ }&{\ }\\
&&&{\ }&{\ }&{\ }&{\ }&{\ }&{\ }\\
{\ }&{\ }&{\ }&{\ }&{\ }&{\ }&{\ }&{\ }&{\ }}\]\qed
\end{example}

Let $\chi(G^{r,\alpha,\beta}_d(X,p,q))$ be the algebraic Euler characteristic of the Brill-Noether variety
$G^{r,\alpha,\beta}_d(X,p,q)$. 
\begin{theorem}[M.~Chan-N.~Pfleuger \cite{Chan}]
\label{thm:Chan}
$(-1)^{g-|{\sf CP}|}\chi(G^{r,\alpha,\beta}_d(X,p,q))=f^{{\sf CP},g}$.
\end{theorem}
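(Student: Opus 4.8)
The plan is to deduce Theorem~\ref{thm:Chan} from the combinatorial identity \eqref{eqn:Nov22abc} together with the structure of the skew shape ${\sf CP}$ and a known geometric description of $G^{r,\alpha,\beta}_d(X,p,q)$ as a degeneracy locus (in the style of the Kempf--Laksov / Fulton determinantal formula). First I would recall, from \cite{Chan}, that $G^{r,\alpha,\beta}_d(X,p,q)$ is realized inside a Grassmannian-type ambient space as a Brill--Noether variety cut out by rank conditions governed by the two partial flags coming from the points $p,q$ and the vanishing sequences $\alpha,\beta$; the combinatorial bookkeeping of these rank conditions is exactly encoded by the skew diagram ${\sf CP}={\sf CP}(g,r,d,\alpha,\beta)$. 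The second ingredient is the K-theoretic (structure-sheaf) class of such a degeneracy locus: in $K$-theory the class of the Brill--Noether locus is represented by a Grothendieck polynomial, and for the relevant ``dominant/vexillary skew'' situation this Grothendieck class is $G_{{\sf CP}}$ (the stable Grothendieck polynomial attached to the skew shape), after the appropriate change of conventions.

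The key steps, in order, are: (1) identify $\chi(G^{r,\alpha,\beta}_d(X,p,q))$ with the degree-$g$ part — i.e.\ the coefficient extraction $[x_1 x_2 \cdots x_g]$ — of the Grothendieck class $G_{{\sf CP}}$, up to the sign $(-1)^{g-|{\sf CP}|}$, using that Euler characteristic is computed by pushing forward the structure sheaf and that the relevant push-forward picks out this coefficient (this is where one invokes Chan--Pflueger's geometric setup directly); (2) apply \eqref{eqn:dec22vvv}, namely $f^{\lambda,N}=[x_1\cdots x_N]G_\lambda$, in the skew-shape generality — so that $[x_1\cdots x_g]G_{{\sf CP}}$ counts $N$-standard set-valued tableaux of shape ${\sf CP}$ with $N=g$, which is precisely $f^{{\sf CP},g}$; (3) track the sign: $G_{{\sf CP}}=\sum_T (-1)^{|T|-|{\sf CP}|}{\bf x}^T$ over semistandard set-valued tableaux, so the coefficient of $x_1\cdots x_g$ carries the uniform sign $(-1)^{g-|{\sf CP}|}$, which matches the $(-1)^{g-|{\sf CP}|}$ on the left-hand side of the claimed identity, leaving $f^{{\sf CP},g}$ with a plus sign. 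Combining (1)--(3) gives $(-1)^{g-|{\sf CP}|}\chi(G^{r,\alpha,\beta}_d(X,p,q))=f^{{\sf CP},g}$, as desired.

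The main obstacle I anticipate is step (1): cleanly matching the geometric Euler characteristic with the combinatorial coefficient-extraction from $G_{{\sf CP}}$. This requires knowing (or citing precisely from \cite{Chan}) that the Brill--Noether variety in question is a vexillary-type degeneracy locus whose $K$-class is the skew stable Grothendieck polynomial, and that the Euler characteristic is the specialization obtained by setting all Chern roots to $1$ in a way equivalent to the $[x_1\cdots x_g]$ operation after dividing by the relevant normalization. One should be careful that the skew shape ${\sf CP}$, rather than a straight shape, appears — so \eqref{eqn:dec22vvv}, stated in the excerpt for straight shapes $\lambda$, must be applied in its natural skew-shape extension, and one must confirm that $|{\sf CP}|\le g$ so that the tableau count $f^{{\sf CP},g}$ is nonzero-eligible (this is a numerical consequence of the hypotheses $g,r,d\ge 0$ defining ${\sf CP}$). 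Once the dictionary between the degeneracy-locus description and $G_{{\sf CP}}$ is in place, the remainder is the bookkeeping in steps (2) and (3), which is routine given \eqref{eqn:408c} and \eqref{eqn:dec22vvv}.
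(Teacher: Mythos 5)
The first thing to note is that the paper contains no proof of Theorem~\ref{thm:Chan}: it is imported verbatim from \cite{Chan} and used as a black box in the proof of Proposition~\ref{cor:Euler}, so there is no internal argument to compare yours against. Judged on its own terms, your outline does follow the strategy by which such a statement is actually established (in \cite{Chan}, building on the determinantal $K$-class of \cite{Anderson}): realize $G^{r,\alpha,\beta}_d(X,p,q)$ as a degeneracy locus over the Jacobian whose rank conditions are encoded by the skew shape ${\sf CP}$, identify its structure-sheaf class with (a specialization of) the skew stable Grothendieck polynomial $G_{{\sf CP}}$, and observe that the pushforward to a point acts like the coefficient extraction $[x_1\cdots x_g]$, after which the skew analogue of \eqref{eqn:dec22vvv} (cf.\ \eqref{eqn:skewDec22}) and the sign bookkeeping yield $f^{{\sf CP},g}$.

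The genuine gap is exactly where you flag it, and it is not a detail one can ``cite precisely from \cite{Chan}'' without making the argument circular: step (1) \emph{is} the theorem. Once one grants that $(-1)^{g-|{\sf CP}|}\chi\bigl(G^{r,\alpha,\beta}_d(X,p,q)\bigr)=(-1)^{g-|{\sf CP}|}[x_1\cdots x_g]G_{{\sf CP}}$, steps (2) and (3) are immediate, since $[x_1\cdots x_g]G_{{\sf CP}}=(-1)^{g-|{\sf CP}|}f^{{\sf CP},g}$ is just the squarefree coefficient of the set-valued tableau generating function; so your proposal reduces Theorem~\ref{thm:Chan} to itself. What is missing is the geometric content: (i) the identification of the $K$-theory class of the twice-pointed Brill--Noether locus in the $g$-dimensional Jacobian (the determinantal formula of \cite{Anderson}, or Chan--Pflueger's reformulation of it), which requires the expected-dimension/Cohen--Macaulay-type statements ensuring the structure sheaf of the locus is computed by that formula; and (ii) the pushforward computation over the abelian variety (the $K$-theoretic analogue of $\int\theta^g=g!$) showing that integration against the theta-divisor data implements precisely the exponential-type specialization you describe as ``picking out $[x_1\cdots x_g]$.'' Neither of these is supplied, and neither is routine, so the proposal is a correct reading of how the proof should be organized rather than a proof.
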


Given $\lambda/\mu$, construct a ($321$-avoiding) permutation $w_{\lambda/\mu}$ by filling all boxes in the same northwest-southeast diagonal with the same entry, starting with $1$ on the northeastmost diagonal and increasing consecutively as one moves southwest. Call this filling $T_{\lambda/\mu}$.
Let $(r_1,r_2,\ldots,r_{|\lambda/\mu|})$ be the left-to-right, top-to-bottom, row reading word of $T_{\lambda/\mu}$. Define 
$w_{\lambda/\mu}=s_{r_1}s_{r_2}\cdots s_{r_{|\lambda/\mu|}}$.

\begin{example} 
\label{exa:Dec21xyz}
If $\lambda/\mu=(12,10,9,9)/(4,3,3,0)$ be ${\sf CP}$ from Example~\ref{exa:Dec21}.
Then
\[T_{\lambda/\mu}=\tableau{&&&& 8 & 7 & 6 & 5 & 4 &3 & 2 &1\\
&&& 10 & 9 & 8 & 7 & 6 & 5 & 4\\
&&& 11 & 10 & 9 & 8 & 7 & 6\\
15 & 14 & 13 & 12 &11 &10 & 9 & 8 & 7}\]
The reading word is
\[(8,7,6,5,4,3,2,1,10,9,8,7,6,5,4,11,10,9,8,7,6,15,14,13,12,11,10,9,8,7)\]
and
$w_{\lambda/\mu}=9,1,2,11,3,12,16,4,5,6,7,8,10,13,14,15\in {S}_{16}$.\qed
\end{example}

Earlier, in \cite{Anderson},
a \emph{cancellative} combinatorial formula for $\chi(G^{r,\alpha,\beta}_d(X,p,q))$ is given. In \cite{Chan}, another cancellative formula is given,
as a signed sum involving counts of (ordinary) skew standard Young tableaux. 
See \cite[Theorem~6.6]{Chan} (and the discussion of
\cite{Fomin.Greene} in \cite[Section~6]{Chan}) as well as \cite[Theorem~A, Theorem~C]{Anderson}.

\begin{proposition}
\label{cor:Euler}
Let $w_{\sf CP}$ is a $321$-avoiding permutation defined as above. Then:
\[(-1)^{g-|{\sf CP}|}\chi(G^{r,\alpha,\beta}_{d}(X,p,q))=\sum_{\lambda, |\lambda|=g} b_{w_{\sf CP},\lambda}f^{\lambda},\]
and
\[(-1)^{g-|{\sf CP}|}\chi(G^{r,\alpha,\beta}_{d}(X,p,q))=\sum_{\lambda: |CP|\leq |\lambda|\leq g} c_{w_{\sf CP},\lambda}f^{\lambda,g}.\]
\end{proposition}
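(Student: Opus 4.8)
The plan is to reduce Proposition~\ref{cor:Euler} to Theorem~\ref{thm:Chan} together with Propositions~\ref{prop:usefulChan} and~\ref{prop:secondG}, so the entire content is a dictionary between the skew-diagram language of \cite{Chan} and the permutation language in which the Hecke-word formulas are phrased. First I would invoke Theorem~\ref{thm:Chan}, which already gives
\[(-1)^{g-|{\sf CP}|}\chi(G^{r,\alpha,\beta}_d(X,p,q))=f^{{\sf CP},g}.\]
So it suffices to show $f^{{\sf CP},g}=\#{\sf Hecke}(w_{\sf CP},g)$; once that identity is in hand, applying (\ref{eqn:Grothflambda}) with $w=w_{\sf CP}$ and $N=g$ yields the first displayed formula, and applying (\ref{eqn:Nov22abc}) with the same $w$ and $N$ yields the second, since $\ell(w_{\sf CP})=|{\sf CP}|$ by construction and the sum ranges over $|{\sf CP}|\le|\lambda|\le g$ exactly as required.

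The heart of the argument is therefore the claim $f^{\lambda/\mu,\,g}=\#{\sf Hecke}(w_{\lambda/\mu},g)$ for the $321$-avoiding permutation $w_{\lambda/\mu}$ built from the diagonal filling $T_{\lambda/\mu}$. Here I would use Proposition~\ref{prop:secondG}: since $w_{\lambda/\mu}$ is $321$-avoiding, hence vexillary, that proposition gives $\#{\sf Hecke}(w_{\lambda/\mu},N)=f^{\lambda(w_{\lambda/\mu}),N}$, so it remains only to identify the shape $\lambda(w_{\lambda/\mu})$ with the skew shape ${\sf CP}=\lambda/\mu$ in the appropriate sense — i.e.\ to check that the set-valued standard tableaux being counted are literally those on the skew diagram ${\sf CP}$, with $N=g$. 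The combinatorial input is the standard fact (going back to the theory of $321$-avoiding permutations and fully commutative elements, cf.\ Billey--Jockusch--Stanley and \cite{BKSTY}) that for a $321$-avoiding $w$, the reduced/Hecke words are exactly the column reading words of (set-valued) tableaux on the skew shape recording the ``staircase'' of $w$, and that this skew shape is recovered from the diagonal filling $T_{\lambda/\mu}$ precisely as ${\sf CP}$. I would spell this out by matching the diagonal-constant filling $T_{\lambda/\mu}$ against the essential-set/Rothe-diagram data of $w_{\lambda/\mu}$, as in the examples (Example~\ref{exa:Dec21xyz}).

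The step I expect to be the main obstacle is this shape identification: making precise that the vexillary shape $\lambda(w_{\lambda/\mu})$ attached to $w_{\lambda/\mu}$ — which Proposition~\ref{prop:secondG} feeds into $f^{\lambda(w),N}$ as a (straight) partition — corresponds, under the $G_w=G_\lambda$ identity and the conventions of \cite{BKSTY, ReinerTennerYong}, to counting $N$-standard set-valued fillings of the skew diagram ${\sf CP}$ rather than of some straight shape. The subtlety is purely bookkeeping about skew-versus-straight and about the transpose/reflection conventions relating $G_w$, $F_w$, and the diagonal filling; there is no new mathematics, but one must be careful that the same object is being counted on both sides. Once that bijection of index sets is nailed down, the two formulas in Proposition~\ref{cor:Euler} follow immediately by substituting $w=w_{\sf CP}$, $N=g$ into (\ref{eqn:Grothflambda}) and (\ref{eqn:Nov22abc}) and using Theorem~\ref{thm:Chan} on the left-hand side.
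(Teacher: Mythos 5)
Your outer reduction is exactly the paper's: apply Theorem~\ref{thm:Chan} to replace the Euler characteristic by $f^{{\sf CP},g}$, prove $f^{{\sf CP},g}=\#{\sf Hecke}(w_{\sf CP},g)$, then substitute $w=w_{\sf CP}$, $N=g$ into (\ref{eqn:Grothflambda}) and (\ref{eqn:Nov22abc}). However, your justification of the middle identity has a genuine error: you assert that $w_{\lambda/\mu}$ is ``$321$-avoiding, hence vexillary'' and then invoke the vexillary clause of Proposition~\ref{prop:secondG}. Vexillary means $2143$-avoiding, and $321$-avoidance does not imply it --- $2143$ itself is $321$-avoiding, and the permutation of Example~\ref{exa:Dec21xyz}, $w_{\sf CP}=9,1,2,11,3,12,16,4,\ldots$, contains a $2143$ pattern (e.g.\ the values $9,3,12,10$ in positions $1,5,6,13$). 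So the clause $\#{\sf Hecke}(w,N)=f^{\lambda(w),N}$ is simply not available for a general $w_{\sf CP}$; moreover $\lambda(w)$ is a straight partition, whereas $f^{{\sf CP},g}$ counts set-valued tableaux on a genuinely skew diagram, so the ``shape identification'' you defer as bookkeeping cannot exist in general. This is not a convention issue but the substantive point of the proposition.

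The correct bridge --- and what the paper does --- is the skew analogue of the $G_w=G_\lambda$ identity: by \cite[Section~2]{Buch:KLR}, for the $321$-avoiding permutation $w_{\nu/\mu}$ built from the diagonal filling one has $G_{w_{\nu/\mu}}=G_{\nu/\mu}$, the stable Grothendieck power series attached to the skew shape $\nu/\mu$ (a sum over semistandard set-valued skew tableaux). Extracting the coefficient of $x_1x_2\cdots x_N$ on both sides, using (\ref{eqn:408a}) on the left, yields $f^{\nu/\mu,N}=\#{\sf Hecke}(w_{\nu/\mu},N)$, which is (\ref{eqn:skewDec22}); with $\nu/\mu={\sf CP}$ and $N=g$ this is the identity you need, and the rest of your argument (Theorem~\ref{thm:Chan} plus the general, non-vexillary formulas of Propositions~\ref{prop:usefulChan} and~\ref{prop:secondG}) then goes through verbatim. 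The Billey--Jockusch--Stanley-type fact you mention is indeed the cohomological shadow of this ($F_{w_{\lambda/\mu}}=s_{\lambda/\mu}$, as used in Example~\ref{exa:skewSYT}), but you need its $K$-theoretic/set-valued version from \cite{Buch:KLR}, not the vexillary specialization.
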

\begin{proof}
By \cite[Section~2]{Buch:KLR},
$G_{w_{\nu/\mu}}=G_{\nu/\mu}:=\sum_{T} (-1)^{|T|-|\nu/\mu|} {\bf x}^T$,
where the sum is over semistandard set-valued tableaux of skew shape $\nu/\mu$. Therefore
if $f^{\nu/\mu,N}$ is the number of standard set-valued tableaux of this shape with $N$ entries then
\begin{equation}
\label{eqn:skewDec22}
f^{\nu/\mu,N}=\#{\sf Hecke}(w_{\nu/\mu},N).
\end{equation} 
Now combine this with Theorem~\ref{thm:Chan},
Proposition~\ref{prop:usefulChan} and~\ref{prop:secondG}.
\end{proof}

The second formula expresses $(-1)^{g-|{\sf CP}|}\chi(G^{r,\alpha,\beta}_{d}(X,p,q))$
as a cancellation-free sum of Euler characteristics of other Brill-Noether varieties. \emph{Is there a geometric explanation of this?}

\section{Three importance sampling algorithms}
\label{sec:5}

\subsection{Estimating $\#{\sf Red}(w)$} \label{subsection:5.1}

Define a random variable ${\sf Y}_w$ for $w\in {S}_n$, as follows:

\hrulefill

\indent
if $w$ is vexillary then\\
\indent \indent\indent
${\sf Y}_w=f^{\lambda(w)}$\\
\indent
else\\
\indent\indent\indent
$C=\{w' \text{\ is a child of $w$ in ${\mathcal T}(w)$}\}$\\
\indent\indent\indent
Choose $W'\in C$ uniformly at random\\
\indent\indent\indent
${\sf Y}_w=\#C \times {\sf Y}_{w'}$

\hrulefill

\begin{proposition}
\label{thm:Yalgorithm}
Let $w\in {S}_n$. Then
${\mathbb E}({\sf Y}_w)=\#{\sf Red}(w)$. 
\end{proposition}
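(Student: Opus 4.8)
The plan is to prove this by induction on $\ell(w_0)-\ell(w) = \binom{n}{2}-\ell(w)$, or more naturally by induction on the height of the transition tree ${\mathcal T}(w)$, using the recursive structure of both the random variable ${\sf Y}_w$ and the formula in Theorem~\ref{thm:noorig}. The base case is when $w$ is vexillary: then ${\mathcal T}(w)$ is a single node, ${\sf Y}_w = f^{\lambda(w)}$ deterministically, and $\#{\sf Red}(w) = f^{\lambda(w)}$ by \eqref{eqn:whenvex}, so ${\mathbb E}({\sf Y}_w) = f^{\lambda(w)} = \#{\sf Red}(w)$.

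For the inductive step, suppose $w$ is not vexillary, so it has children $C = \{w^{(1)},\ldots,w^{(t)}\}$ in ${\mathcal T}(w)$, each of which is a node of smaller tree-height, and we may assume by induction that ${\mathbb E}({\sf Y}_{w^{(i)}}) = \#{\sf Red}(w^{(i)})$ for each $i$. By the definition of the algorithm, conditioned on the choice $W' = w^{(i)}$ (which happens with probability $1/t$), we have ${\sf Y}_w = t\cdot {\sf Y}_{w^{(i)}}$. Since the internal randomness used to evaluate ${\sf Y}_{w^{(i)}}$ is independent of the choice of $W'$, the conditional expectation is ${\mathbb E}({\sf Y}_w \mid W' = w^{(i)}) = t\cdot {\mathbb E}({\sf Y}_{w^{(i)}}) = t\cdot\#{\sf Red}(w^{(i)})$. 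Then by the law of total expectation,
\[
{\mathbb E}({\sf Y}_w) = \sum_{i=1}^{t} \frac{1}{t}\cdot t\cdot \#{\sf Red}(w^{(i)}) = \sum_{i=1}^{t}\#{\sf Red}(w^{(i)}).
\]

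Finally I would invoke Theorem~\ref{thm:noorig}, which gives $\#{\sf Red}(w) = \sum_{v\in{\mathcal L}(w)} f^{\lambda(v)}$, together with the observation that the leaves of ${\mathcal T}(w)$ are the disjoint union of the leaves of the subtrees ${\mathcal T}(w^{(i)})$, so that $\#{\sf Red}(w) = \sum_{i=1}^{t}\sum_{v\in{\mathcal L}(w^{(i)})} f^{\lambda(v)} = \sum_{i=1}^{t}\#{\sf Red}(w^{(i)})$; equivalently one can apply the Schubert/Stanley recurrence \eqref{eqn:trans} directly. This matches the expression for ${\mathbb E}({\sf Y}_w)$ above, completing the induction. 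The only real subtlety — and the step I would be most careful about — is the independence claim: one must be precise that ${\sf Y}_{w'}$ is computed using a fresh supply of random bits, independent of the top-level uniform choice of $W'\in C$, so that conditioning is legitimate and the recursion genuinely produces a well-defined random variable; this is the standard ``recursive importance sampling'' setup described in the introduction to Section~\ref{sec:5}, and the finiteness of ${\mathcal T}(w)$ (guaranteed since transition terminates) ensures ${\sf Y}_w$ is well-defined with finite expectation.
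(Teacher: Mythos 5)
Your proof is correct and follows essentially the same route as the paper: induction on the height of ${\mathcal T}(w)$, with the vexillary base case handled by (\ref{eqn:whenvex}) and the inductive step by conditioning on the uniform choice of child and then summing $\#{\sf Red}(w^{(i)})$ via Theorem~\ref{thm:noorig}. Your extra care about the independence of the fresh randomness and the decomposition of ${\mathcal L}(w)$ into the leaves of the subtrees only makes explicit what the paper leaves implicit.
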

\begin{proof}
We induct on $h=h(w)\geq 0$, the height of ${\mathcal T}(w)$, i.e., the maximum length of any path
from the root to a leaf. In the base case, $h=0$, $w$ is vexillary and thus, by (\ref{eqn:whenvex}), 
\[{\mathbb E}({\sf Y}_w)=f^{\lambda(w)}=\#{\sf Red}(w).\] 

Our induction hypothesis is that  ${\mathbb E}({\sf Y}_u)=\#{\sf Red}(u)$ whenever $h(u)<h(w)$. Now
\begin{align*}
{\mathbb E}({\sf Y}_w)= & \sum_{w'\in C}{\mathbb  E}({\sf Y}_w|W'=w'){\mathbb P}(W'=w')\\
= & \frac{1}{\#C}\sum_{w'\in C}{\mathbb E}({\sf Y}_w|W'=w')\\
= &\frac{1}{\#C}\sum_{w'\in C}{\mathbb E}(\#C\times {\sf Y}_{w'})\\
=&\sum_{w'\in C}{\mathbb E}({\sf Y}_{w'})\\
=&\sum_{w'\in C}\#{\sf Red}(w') \text{\ \ (induction hypothesis)}\\
= & \#{\sf Red}(w).
\end{align*}
The last equality is by construction of the transition algorithm and Theorem~\ref{thm:noorig}.
\end{proof}

\begin{example}
Let $w=43817625\in {S}_8$. We have the following
sequence of transition steps
\[43817625\stackrel{3}{\to} 53817426\stackrel{1}{\to} 53827146\stackrel{3}{\to} 63825147\stackrel{2}{\to} 63842157 \stackrel{2}{\to} 73642158.\]
The number of children is indicated at each stage. The final permutation is
vexillary, and 
$f^{\lambda(73642158)}=f^{6,4,2,2,1}=243243$.
Hence one sample is $3\times 1\times 3\times 2\times 2\times 243243=8756748$.
Using sample size $2\times 10^3$ gives an estimate of $2.09(\pm 0.04)\times 10^6$, versus
$\#{\sf Red}(w)=2085655$.\footnote{The ``$(\pm 0.04)$'' refers to the standard error of the mean. All estimates are based on twelve trials of an indicated sample size. Code is available at \url{https://github.com/ICLUE/reduced-word-enumeration}}\qed
\end{example}

\begin{example}[$w=\sigma^{(n)}=2143\cdots 2n \ 2n-1$]
When $n=10$ (so $\sigma^{(n)}\in {S}_{20}$), using sample size $10^5$ gives an estimate of $3.63(\pm 0.02)\times 10^6$, 
which is close to the exact value $10!=3628800$. 
When $n=30$ ($\sigma^{(n)}\in {S}_{60}$), using sample size $2\times 10^6$ one estimates
$2.18(\pm 0.49)\times 10^{32}$ whereas $30!= 2.65\ldots \times 10^{32}$. 
\qed
\end{example}

\begin{example}[Estimating the number of skew standard Young tableaux]
\label{exa:skewSYT}
We continue Example~\ref{exa:Dec21xyz}.
Let $f^{\lambda/\mu}$ be the number of standard Young tableaux of shape $\lambda/\mu$.
By a result of S.~Billey-W.~Jockusch-R.~P.~Stanley \cite[Corollary~2.4]{BJS},
$F_{w_{\lambda/\mu}}=s_{\lambda/\mu}$.
Taking the coefficient of $x_1 x_2\cdots x_{|\lambda/\mu|}$ on both sides implies
$\#{\sf Red}(w_{\lambda/\mu})=f^{\lambda/\mu}$. One has the textbook determinantal formula 
\begin{equation}
\label{eqn:thedet}
f^{\lambda/\mu}=\left|\lambda/\mu \right|!\det\left(\frac{1}{(\lambda_i-\mu_j-i+j)!}\right)_{i,j=1}^t.
\end{equation}
So $f^{\lambda/\mu}\!=\!73064598262110\!\approx\! 7.31\times 10^{13}$.
A $10^4$ sample size estimate is $7.30(\pm 0.04)\times 10^{13}$.
\qed
\end{example}

\subsection{Estimating $\#{\sf Hecke}(w,N)$}
We propose a different
importance sampling algorithm, to compute $\#{\sf Hecke}(w,N)$. For $N<\ell(w)$ 
the random variable ${\sf Z}_{w,N}$ is equal to $0$ and for $N\geq \ell(w)$, it is recursively defined by:

\hrulefill

\indent
if $w=id$  then\\
\indent \indent\indent
if $N=0$ then
\ ${\sf Z}_{w,N}=1$ \ else \
${\sf Z}_{w,N}=0$\\
\indent
else\\
\indent\indent\indent
$D=\{i:w(i)>w(i+1)\}$\\
\indent\indent\indent
Choose $I\in D$ and $\theta\in\{0,1\}$ independently and uniformly at random\\
\indent\indent\indent
if $\theta=0$ then
 \ ${\sf Z}_{w,N}=2\#D\times {\sf Z}_{w,N-1}$ \ 
else
 \ ${\sf Z}_{w,N}=2\#D\times {\sf Z}_{ws_I,N-1}$

\hrulefill

\begin{proposition}
\label{prop:Heckealg123}
Let $w\in {S}_n$ and $N\geq \ell(w)$. Then
${\mathbb E}({\sf Z}_{w,N})=\#{\sf Hecke}(w,N)$.
\end{proposition}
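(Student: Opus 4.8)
The plan is to induct on $N$, with a secondary induction on $\ell(w)$ (equivalently, on the number of inversions, which matches the structure of the algorithm since each recursive call either decreases $N$ or decreases $N$ and leaves $w$ alone or moves to a permutation with fewer inversions). First I would dispose of the base cases: if $w = \mathrm{id}$, then $\#{\sf Hecke}(\mathrm{id}, N)$ is $1$ if $N = 0$ and $0$ otherwise (the only Hecke word whose Demazure product is the identity is the empty word, since any nonempty Hecke word has a Demazure product of positive length), which is exactly the value assigned to ${\sf Z}_{\mathrm{id},N}$. If $N < \ell(w)$ then $\#{\sf Hecke}(w,N) = 0$ since a Hecke word of length $N$ can have Demazure product of length at most $N$; and ${\sf Z}_{w,N} = 0$ by definition.

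The inductive step is a linearity-of-expectation computation. Assume $w \neq \mathrm{id}$ and $N \geq \ell(w) \geq 1$, so $D = \{i : w(i) > w(i+1)\}$ is nonempty. Conditioning on the independent uniform choices of $I \in D$ and $\theta \in \{0,1\}$, each pair $(I,\theta)$ has probability $\frac{1}{2\#D}$, so
\[
{\mathbb E}({\sf Z}_{w,N}) = \frac{1}{2\#D}\sum_{I \in D}\Bigl( {\mathbb E}(2\#D \times {\sf Z}_{w,N-1}) + {\mathbb E}(2\#D\times {\sf Z}_{ws_I,N-1})\Bigr) = \#D\cdot{\mathbb E}({\sf Z}_{w,N-1}) + \sum_{I\in D}{\mathbb E}({\sf Z}_{ws_I,N-1}).
\]
For the first term, $N - 1 \geq \ell(w) - 1$; if $N - 1 \geq \ell(w)$ induction gives ${\mathbb E}({\sf Z}_{w,N-1}) = \#{\sf Hecke}(w,N-1)$, while if $N-1 = \ell(w)-1 < \ell(w)$ both sides are $0$. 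For each $I \in D$ we have $\ell(ws_I) = \ell(w) - 1 \leq N - 1$, so induction (on the lexicographically smaller pair, since $\ell(ws_I) < \ell(w)$) gives ${\mathbb E}({\sf Z}_{ws_I,N-1}) = \#{\sf Hecke}(ws_I,N-1)$. Hence
\[
{\mathbb E}({\sf Z}_{w,N}) = \#D\cdot\#{\sf Hecke}(w,N-1) + \sum_{I\in D}\#{\sf Hecke}(ws_I,N-1).
\]

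It remains to show this equals $\#{\sf Hecke}(w,N)$, which is the real combinatorial content. A Hecke word $(i_1,\ldots,i_N)$ for $w$ is determined by its last letter $i_N$ and the prefix $(i_1,\ldots,i_{N-1})$, whose Demazure product $u := s_{i_1}\star\cdots\star s_{i_{N-1}}$ must satisfy $u \star s_{i_N} = w$. The condition $u \star s_{i_N} = w$ holds in exactly two ways: either $u = w$ and $s_{i_N}$ is a \emph{descent direction} of $w$, i.e.\ $i_N \in D$ (so $w\star s_{i_N} = w$ because $\ell(ws_{i_N}) = \ell(w)-1 < \ell(w)+1$); or $\ell(w s_{i_N}) = \ell(w) + 1$ and $u = w s_{i_N}$, equivalently $u = ws_j$ where $j$ is an ascent position of $w$ — but reparametrizing, as $j$ ranges over ascents of $w$, the permutation $ws_j$ ranges exactly over those $v$ with $\ell(v) = \ell(w)-1$ and $vs_j = w$... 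I need to be careful here: the cleaner bijection is that $\{v : v\star s_i = w \text{ for some } i\}$ splits as $\{w\}$ (contributing via each $i \in D$) together with $\{ws_I : I \in D\}$ (each such $v = ws_I$ has $\ell(v) = \ell(w)-1$ and $v \star s_I = w$, and these $v$ are distinct for distinct $I$). The main obstacle is verifying that these two families exhaust all $(u, i_N)$ with $u\star s_{i_N} = w$ \emph{without overcounting}: given $v$ and $i$ with $v\star s_i = w$, either $v = w$ (forcing $i \in D$) or $\ell(vs_i) = \ell(v)+1$ with $vs_i = w$, i.e.\ $v = ws_i$ and $\ell(ws_i) = \ell(w) - 1$, i.e.\ $i \in D$ and $v = ws_i$. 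So the set of valid $(v,i)$ pairs is precisely $\{(w, i) : i \in D\} \sqcup \{(ws_i, i) : i \in D\}$, and summing $\#{\sf Hecke}(\cdot, N-1)$ over the first-coordinates of these pairs yields exactly $\#D\cdot\#{\sf Hecke}(w,N-1) + \sum_{I\in D}\#{\sf Hecke}(ws_I,N-1)$, completing the induction.
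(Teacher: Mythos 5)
Your proof is correct and follows essentially the same route as the paper: you derive the recurrence $\#{\sf Hecke}(w,N)=\sum_{i\in D}\bigl(\#{\sf Hecke}(ws_i,N-1)+\#{\sf Hecke}(w,N-1)\bigr)$ by analyzing the last letter of a Hecke word (your classification of pairs $(v,i)$ with $v\star s_i=w$ is exactly the paper's descent claim plus its prefix bijection), and then conclude by induction on $N$ via the same conditional-expectation computation. The only cosmetic difference is your explicit handling of the $N-1<\ell(w)$ edge case, which the paper absorbs into the convention ${\sf Z}_{w,N}=0$ for $N<\ell(w)$.
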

\begin{proof}
First we claim
\begin{equation}
\label{eqn:Heckething}
\#{\sf Hecke}(w,N)=
\begin{cases}
1 & \text{if $w=id$ and $N=0$}\\
0 & \text{if $w=id$ and $N>0$}\\
\sum_{i\in D}(\#{\sf Hecke}(ws_i,N-1)+\#{\sf Hecke}(w,N-1)) & \text{otherwise.}\\
\end{cases}
\end{equation}
The unique Hecke word for $w=id$ is the empty word; this explains the first two cases. 

Thus assume $w\neq id$ and $N\geq \ell(w)$. Suppose that
$(i_1, i_2, \ldots, i_N)\in {\sf Hecke}(w,N)$.
\begin{claim}
\label{claim:blah123}
$i_N$ is the position of a descent of $w$, i.e., $w(i_N)>w(i_N +1)$.
\end{claim}
\noindent
\emph{Proof of Claim~\ref{claim:blah123}:}
Consider $w':=s_{i_1}\star s_{i_2}\star \cdots \star s_{i_{N-1}}$.
Either $\ell(w')=\ell(w)$ or $\ell(w')=\ell(w)-1$. In the former case
then if $i_N$ is the position of an ascent of $w'=w$ then $w=w'\star s_{i_N}$ would create a descent at that position, a contradiction. In the latter case, $w'$ had an ascent at position $i_N$ which becomes a descent in $w'\star s_{i_N}=w's_{i_N}$.
\qed

Claim~\ref{claim:blah123} implies the existence of a bijection
\begin{equation}
\label{eqn:vcvmay23}
{\sf Hecke}(w,N)\stackrel{\sim}{\to} \left(\bigcup_{i\in D} {\sf Hecke}(ws_i,N-1)\times \{i\}\right)\cup \left(\bigcup_{i\in D} {\sf Hecke}(w,N-1)\times \{i\}\right),
\end{equation}
defined by $(i_1,i_2,\ldots, i_{N-1}, i_N)\in {\sf Hecke}(w,N)\mapsto ((i_1,i_2,\ldots,i_{N-1}),i_N)$.\footnote{
If $N=\ell(w)$, then ${\sf Hecke}(w,N)={\sf Red}(w)$ and ${\sf Hecke}(w,N-1)=\emptyset$. In this case, (\ref{eqn:vcvmay23}) reduces to the bijection
${\sf Red}(w)\stackrel{\sim}{\rightarrow} \bigcup_{i\in D} {\sf Red}(ws_i)\times\{i\}$.}
Therefore, by taking cardinalities on both sides of (\ref{eqn:vcvmay23}) we obtain the third case of (\ref{eqn:Heckething}).

\excise{
Suppose $N=\ell(w)$. We induct on $\ell(w)\geq 0$. The case $\ell(w)=0$
holds by the first case of (\ref{eqn:Heckething}). Next, if $\ell(w)=1$ then $w=s_i$ for some $i$.
Assume $\ell(w)>1$ and ${\mathbb E}({\sf Z}_{u,\ell(u)})=\#{\sf Red}(u)$ for all $u$ with $\ell(u)<\ell(w)$.
 \begin{align*}
{\mathbb E}({\sf Z}_{w,\ell(w)})= & \sum_{i\in D}{\mathbb  E}({\sf Z}_{w,\ell(w)}|I=i){\mathbb P}(I=i)\\
= & \frac{1}{\#D}\sum_{i\in D}{\mathbb E}({\sf Z}_{w,\ell(w)}|I=i)\\
= &\frac{1}{\#D}\sum_{i\in D}{\mathbb E}(\#D\times {\sf Z}_{ws_i,\ell(w)-1})
\end{align*}
\begin{align*}
=&\sum_{i\in D}{\mathbb E}({\sf Z}_{ws_i,\ell(ws_i)}) \text{\ \ ($\ell(w)-1=\ell(ws_i)$ since $i\in D$)}\\
=&\sum_{i\in D}\#{\sf Red}(ws_i) \text{\ \ (induction hypothesis)}\\
= & \#{\sf Red}(w) \text{\ \  \ \ \ \ \ \ \ \ \ \ (by the third case of (\ref{eqn:Heckething})),}
\end{align*}
as required. 

Now suppose $N>\ell(w)$.}

Returning to proposition itself, we induct on $N\geq 0$. The case $N=0$ holds by the first case of
(\ref{eqn:Heckething}) and the definition $Z_{w,N}=0$ if $N<\ell(w)$. 
For $N>0$,
\begin{align*}
{\mathbb E}({\sf Z}_{w,N}) & = \sum_{i \in D}{\mathbb E}({\sf Z}_{w,N}|I=i,\theta=0){\mathbb P}(I=i){\mathbb P}(\theta=0)\\
& \ +\sum_{i \in D}{\mathbb E}({\sf Z}_{w,N}|I=i,\theta=1){\mathbb P}(I=i){\mathbb P}(\theta=1)\\
&=\sum_{i\in D}{\mathbb E}(2\#D\times {\sf Z}_{w,N-1})\frac{1}{\#D}\times \frac{1}{2}+ \sum_{i\in D}{\mathbb E}(2\#D\times {\sf Z}_{ws_i,N-1})\frac{1}{\#D}\times \frac{1}{2}\\
& = \sum_{i\in D}\left({\mathbb E}({\sf Z}_{w,N-1})+{\mathbb E}({\sf Z}_{ws_i,N-1})\right)\\
& = \sum_{i\in D}\left(\#{\sf Hecke}(w,N-1)+\#{\sf Hecke}(ws_i,N-1)\right)\\
& = \#{\sf Hecke}(w,N),
\end{align*}
where we have applied induction (on $N$) and the third case of (\ref{eqn:Heckething}). 
\end{proof}

\begin{example}
\label{exa:Dec26a}
One can explicitly generate all $2030964$ elements of ${\sf Hecke}(351624, 13)$.
 A $2000$ sample size estimate is $2.04(\pm 0.10)\times 10^6$. \qed 
\end{example}
	
\begin{example}
\label{exa:bad}
By \cite[Corollary~1.3]{ReinerTennerYong},
\begin{equation}
\label{eqn:RTYplus2}
\#{\sf Hecke}\left(w_0,{n\choose 2}+1\right)=\frac{{n\choose 2}\left[{n\choose 2}+1\right]}{n}\times \#{\sf Red}(w_0).
\end{equation}

For $n=10$, $\#{\sf Hecke}(w_0,46)=5.65\ldots \times 10^{28}$.
Using sample size $10^8$, we obtained an estimate of $\approx 4.26(\pm 1.94)\times 10^{28}$. 
\qed
\end{example}

The ${\sf Z}$-algorithm restricts to an algorithm to compute $\#{\sf Red}(w)$. However, the ${\sf Y}$-algorithm of Subsection~\ref{subsection:5.1} sometimes has better convergence in this case. This suggests a 
``hybrid'' algorithm. Define ${\sf H}_{w,N}$ to be $0$ if $N<\ell(w)$. Otherwise,

\hrulefill

\indent
if $N=\ell(w)$ then ${\sf H}_{w,N}=Y_w$\\
\indent
else if $w=id$  then\\
\indent \indent\indent
if $N=0$ then
\ ${\sf H}_{w,N}=1$ \ else\ 
${\sf H}_{w,N}=0$\\
\indent
else\\
\indent\indent\indent
$D=\{i:w(i)>w(i+1)\}$\\
\indent\indent\indent
Choose $I\in D$ and $\theta\in\{0,1\}$ independently and uniformly at random\\
\indent\indent\indent
if $\theta=0$ then
  \ ${\sf H}_{w,N}=2\#D\times {\sf H}_{w,N-1}$ \ 
else
  \ ${\sf H}_{w,N}=2\#D\times {\sf H}_{ws_I,N-1}$

\hrulefill

\begin{proposition}
Let $w\in S_n$. Then ${\mathbb E}[{\sf H}_{w,N}]=\#{\sf Hecke}(w,N)$.
\end{proposition}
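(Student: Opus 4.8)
The plan is to prove ${\mathbb E}[{\sf H}_{w,N}]=\#{\sf Hecke}(w,N)$ by induction on $N$, mimicking the structure already used in the proofs of Proposition~\ref{thm:Yalgorithm} and Proposition~\ref{prop:Heckealg123}, and merely splicing the two arguments together at the value $N=\ell(w)$. First I would dispose of the trivial range: if $N<\ell(w)$, both sides are $0$ by definition of ${\sf H}_{w,N}$ and since no Hecke word of length $N<\ell(w)$ exists (the Demazure product of $N$ transpositions has length at most $N$).

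Next I would handle the base-of-recursion case $N=\ell(w)$. Here the algorithm sets ${\sf H}_{w,N}={\sf Y}_w$, so ${\mathbb E}[{\sf H}_{w,N}]={\mathbb E}[{\sf Y}_w]=\#{\sf Red}(w)$ by Proposition~\ref{thm:Yalgorithm}; and since ${\sf Hecke}(w,\ell(w))={\sf Red}(w)$, this equals $\#{\sf Hecke}(w,N)$. (The case $w=id$, $N=0$ is subsumed here, since $\ell(id)=0$, but one can also read it off the explicit ${\sf H}_{id,0}=1$ line.) Then the induction: fix $w$ with $N>\ell(w)$ (so in particular $w\neq id$ is not forced, but if $w=id$ then $N>0=\ell(w)$ and ${\sf H}_{w,N}=0=\#{\sf Hecke}(id,N)$, matching the second case of (\ref{eqn:Heckething})), and assume the claim for $N-1$ and all permutations. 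Conditioning on the independent uniform choices $I\in D$ and $\theta\in\{0,1\}$, exactly as in the proof of Proposition~\ref{prop:Heckealg123}, one computes
\[
{\mathbb E}[{\sf H}_{w,N}]=\sum_{i\in D}\bigl({\mathbb E}[{\sf H}_{w,N-1}]+{\mathbb E}[{\sf H}_{ws_i,N-1}]\bigr)
=\sum_{i\in D}\bigl(\#{\sf Hecke}(w,N-1)+\#{\sf Hecke}(ws_i,N-1)\bigr),
\]
the first equality coming from the $2\#D\cdot\frac{1}{\#D}\cdot\frac12=1$ bookkeeping and the second from the inductive hypothesis; by the third case of (\ref{eqn:Heckething}) this is $\#{\sf Hecke}(w,N)$.

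The one genuine subtlety — and what I expect to be the only place requiring care — is that the induction on $N$ must interact correctly with the branch $N=\ell(w)$, because as one recurses from $(w,N)$ to $(ws_i,N-1)$ the quantity $N-\ell$ can change in two ways: if $i\in D$ then $\ell(ws_i)=\ell(w)-1$ so $N-1-\ell(ws_i)=N-\ell(w)$ is unchanged, whereas the $\theta=0$ branch $(w,N-1)$ decreases $N-\ell(w)$ by one. So one should really induct on $N$ with the statement quantified over all $w\in S_n$ simultaneously; then whenever $N-1=\ell$ of the relevant permutation, the recursive call lands in the ${\sf Y}_\bullet$ base case, which is handled by Proposition~\ref{thm:Yalgorithm} rather than by the inductive hypothesis, and the argument still closes. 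I would state the induction accordingly and remark that (\ref{eqn:Heckething}) together with Proposition~\ref{thm:Yalgorithm} and Proposition~\ref{prop:Heckealg123}'s reasoning is all that is needed; no new combinatorial identity is required.

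\begin{proof}
If $N<\ell(w)$, then ${\sf H}_{w,N}=0$ by definition, and ${\sf Hecke}(w,N)=\emptyset$ since a Demazure product of $N$ simple transpositions has length at most $N$; so the claim holds.

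We now prove the claim for $N\geq \ell(w)$ by induction on $N$, with the statement quantified over all permutations simultaneously. If $N=\ell(w)$, then ${\sf H}_{w,N}={\sf Y}_w$, so by Proposition~\ref{thm:Yalgorithm},
\[
{\mathbb E}[{\sf H}_{w,N}]={\mathbb E}[{\sf Y}_w]=\#{\sf Red}(w)=\#{\sf Hecke}(w,\ell(w))=\#{\sf Hecke}(w,N),
\]
using ${\sf Hecke}(w,\ell(w))={\sf Red}(w)$. In particular this covers $w=id$, $N=0$.

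Now suppose $N>\ell(w)$ and the claim holds for $N-1$ (for every permutation). If $w=id$, then $N>0=\ell(id)$, the algorithm gives ${\sf H}_{id,N}=0$, and $\#{\sf Hecke}(id,N)=0$ by the second case of (\ref{eqn:Heckething}); so the claim holds. Assume $w\neq id$, and write $D=\{i:w(i)>w(i+1)\}$. Conditioning on the independent uniform choices of $I\in D$ and $\theta\in\{0,1\}$,
\begin{align*}
{\mathbb E}[{\sf H}_{w,N}]
&=\sum_{i\in D}{\mathbb E}[{\sf H}_{w,N}\mid I=i,\theta=0]\,{\mathbb P}(I=i)\,{\mathbb P}(\theta=0)\\
&\quad+\sum_{i\in D}{\mathbb E}[{\sf H}_{w,N}\mid I=i,\theta=1]\,{\mathbb P}(I=i)\,{\mathbb P}(\theta=1)\\
&=\sum_{i\in D}{\mathbb E}[2\#D\times {\sf H}_{w,N-1}]\,\frac{1}{\#D}\cdot\frac{1}{2}
+\sum_{i\in D}{\mathbb E}[2\#D\times {\sf H}_{ws_i,N-1}]\,\frac{1}{\#D}\cdot\frac{1}{2}\\
&=\sum_{i\in D}\bigl({\mathbb E}[{\sf H}_{w,N-1}]+{\mathbb E}[{\sf H}_{ws_i,N-1}]\bigr).
\end{align*}
For each $i\in D$ we have $N-1\geq \ell(w)-1=\ell(ws_i)$ and $N-1\geq \ell(w)>\ell(w)-1$, so both ${\mathbb E}[{\sf H}_{w,N-1}]$ and ${\mathbb E}[{\sf H}_{ws_i,N-1}]$ are covered by the induction hypothesis (which includes the case where the second argument equals the length of the permutation, handled via Proposition~\ref{thm:Yalgorithm}). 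Hence
\[
{\mathbb E}[{\sf H}_{w,N}]=\sum_{i\in D}\bigl(\#{\sf Hecke}(w,N-1)+\#{\sf Hecke}(ws_i,N-1)\bigr)=\#{\sf Hecke}(w,N),
\]
where the last equality is the third case of (\ref{eqn:Heckething}). This completes the induction.
\end{proof}
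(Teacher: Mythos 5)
Your proof is correct and is exactly the argument the paper has in mind: the paper omits the proof, describing it as a straightforward modification of the proof of Proposition~\ref{prop:Heckealg123} using Proposition~\ref{thm:Yalgorithm}, which is precisely what you carry out (induction on $N$ quantified over all permutations, with the $N=\ell(w)$ case handled by the ${\sf Y}$-algorithm and the recursion handled via (\ref{eqn:Heckething})). Your added care about how the branch $N=\ell(\cdot)$ interacts with the induction is a correct and welcome elaboration, not a deviation.
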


We omit the proof, as it is a straightforward modification of the argument for Proposition~\ref{prop:Heckealg123}, using Proposition~\ref{thm:Yalgorithm}.

\begin{example}
Let $w=361824795\in S_9$; hence $\ell(w)=12$. Using sample size $10^6$ with the ${\sf Z}$ algorithm
gives $\#{\sf Hecke}(w,25)\approx
5.98(\pm 0.04)\times 10^{16}$.
The estimate from the ${\sf H}$ algorithm (with the same sample size) is $\#{\sf Hecke}(w,25)\approx
6.02(\pm 0.08)\times 10^{16}$.
For Example~\ref{exa:bad}, with $10^8$ samples, the {\sf H} algorithm estimates 
 $\#{\sf Hecke}(w_0,46)$
as $6.09(\pm 4.69)\times 10^{28}$. 
\qed
\end{example}

\begin{example} We use Proposition~\ref{prop:plus2} to compute $\#{\sf Hecke}(w_0,{n\choose 2}+2)$.
When $n=7$, $\#{\sf Hecke}(w_0,23)=2.54\ldots\times 10^{12}$.
A $10^6$ sample size estimate is $2.60(\pm 0.22)\times 10^{12}$.  
For $n=10$, $\#{\sf Hecke}(w_0, 47)=6.01\ldots\times 10^{30}$. A $10^8$ sample size estimate is $\approx 4.04(\pm 2.17)\times 10^{30}$.
\qed
\end{example}

\begin{example}[Skew set-valued tableaux]
\label{exa:Dec26b}
To estimate $f^{\lambda/\mu,N}$ for  $\lambda/\mu=(12,10,9,9)/(4,3,3,0)$ and $N=45$, 
we use (\ref{eqn:skewDec22}) and the ${\sf Z}$-algorithm with sample size $10^7$ 
to predict $f^{\lambda/\mu,45}=\#{\sf Hecke}(w_{\lambda/\mu},45)\approx 1.30(\pm 0.03)\times 10^{33}$.
This is backed by the estimate $1.29(\pm 0.06)\times 10^{33}$ using the ${\sf H}$-algorithm with sample size $10^6$. 
We have thus estimated the value of $(-1)^{g-|{\sf CP}|}\chi(G^{r,\alpha,\beta}_d(X,p,q))$ for the
parameters of Example~\ref{exa:Dec21}. There are a number of ways to theoretically compute this value
(\cite{Anderson}, \cite{Chan}, Proposition~\ref{cor:Euler}).
\emph{What is the exact value?} \qed
\end{example}

\section*{Acknowledgments}
We thank Anshul Adve, David Anderson, Alexander Barvinok, Melody Chan, Yuguo Chen, Anna Chlopecki, Michael Engen, Neil Fan, Sergey Fomin, Sam Hopkins, Allen Knutson, Tejo Nutalapati, Gidon Orelowitz, Colleen Robichaux, Renming Song, John Stembridge, Anna Weigandt and Harshit Yadav for helpful remarks/discussion.
We are especially grateful to Brendan Pawlowski for pointing out Theorem~\ref{thm:averagemain} appears as Theorem~3.2.7 of \cite{Brendan:thesis}, as well 
as other remarks.
AY was supported by an NSF grant and a Simons Collaboration Grant. This work is part of
ICLUE, the Illinois Combinatorics Lab for Undergraduate Experience.


\begin{thebibliography}{99}
\bibitem{AdveRobichauxYong} A.~Adve, C.~Robichaux, and A.~Yong, \emph{Complexity, combinatorial positivity, and Newton polytopes},
preprint, 2018. \textsf{arXiv:1810.10361}
\bibitem{Anderson} D.~Anderson, L.~Chen, and N.~Tarasca, \emph{$K$-classes of Brill-Noether loci and a determinantal formula}, preprint, 2017. \textsf{arXiv:1705.02992}
\bibitem{Billey:transition} S.~Billey, \emph{Transition equations for isotropic flag manifolds}, Selected papers in honor of Adriano Garsia (Taormina, 1994). Discrete Math. 193 (1998), no. 1-3, 69--84. 
\bibitem{BJS}
S.~Billey, W.~Jockusch and
R.~P.~Stanley, \emph{Some combinatorial properties of Schubert polynomials}, 
J.~Algebraic Combin.~{\bf 2}(1993), no. 4, 345--374.
\bibitem{Billey} S.~Billey and B.~Pawlowski, \emph{Permutation patterns, Stanley symmetric functions, and generalized Specht modules}, J. Combin. Theory Ser. A 127 (2014), 
85--120.
\bibitem{Diaconisblitz} J.~Blitzstein and P.~Diaconis, \emph{A sequential importance sampling algorithm for generating random graphs with prescribed degrees}, Internet Math. 6 (2010), no. 4, 489?522.
\bibitem{Bona} M.~Bona, \emph{The copies of any permutation pattern are asymptotically normal}, preprint, 2017.
\textsf{arXiv:0712.2792}
\bibitem{Buch:KLR} A.~Buch, \emph{A Littlewood-Richardson rule for the $K$-theory of Grassmannians},
Acta Math. 189 (2002), no. 1, 37--78.
\bibitem{Buch:qhpartial} \bysame, \emph{Quantum cohomology of partial flag manifolds}, Trans. Amer. Math. Soc. 357 (2005), 443--458.
\bibitem{BKSTY} A.~Buch,  A.~Kresch, M.~Shimozono, H.~Tamvakis, and A.~Yong, \emph{Stable Grothendieck polynomials and $K$-theoretic factor sequences}, Math. Ann. 340 (2008), no. 2, 359--382.
\bibitem{Chan} M.~Chan and N.~Pflueger, \emph{Euler characteristics of Brill-Noether varieties},
preprint, 2017. \textsf{arXiv:1708.09378}
\bibitem{Diaconis}
S.~Chatterjee and P.~Diaconis, \emph{The sample size required in importance sampling},
Ann. Appl. Probab. 28 (2018), no. 2, 1099--1135.
\bibitem{Edelman.Greene} P.~Edelman and C.~Greene,
\emph{Balanced tableaux}, Adv. in Math. 63 (1987), no. 1, 42--99.
\bibitem{Fomin.Greene} S. Fomin and C. Greene, \emph{Noncommutative Schur functions and their applications},
Discrete Math. {\bf 193}(1998), 179--200, Selected papers in honor of Adriano Garsia (Taormina, 1994).
\bibitem{Fomin.Kirillov} S.~Fomin and A.~N.~Kirillov, \emph{Grothendieck polynomials and the Yang-Baxter equation}, Formal power series and algebraic combinatorics/S\'{e}ries formelles et combinatoire alg\'{e}brique, 183--189, DIMACS, Piscataway, NJ, s.d..
\bibitem{hook} J.~S.~Frame, Robinson, G. de B. Robinson and R.~M.~Thrall, \emph{The hook graphs of the symmetric group}, Can. J. Math. 6 (1954), 316--325.
\bibitem{Fulton} W.~Fulton, \emph{Flags, Schubert polynomials, degeneracy loci, and determinantal formulas}, Duke Math. J. 65 (1992), no. 3, 381--420.
\bibitem{Garsia} A.~Garsia, \emph{The saga of reduced factorizations of elements of the symmetric group}, Publications du Laboratoire de Combinatoire et d'Informatique
Math\'ematique, 29, 2002.
\bibitem{Hamaker:SchurP} Z.~Hamaker, E.~Marberg and B.~Pawlowski, \emph{Schur 
$P$-positivity and Involution Stanley Symmetric Functions},
International Mathematics Research Notices, rnx274, 2017.
\bibitem{Janson} S.~Janson, B.~Nakamura, and D.~Zeilberger, \emph{On the asymptotic statistics of the number of occurrences of multiple permutation patterns}, 
J. Comb. 6 (2015), no. 1-2, 117--143.
\bibitem{Kahn.Harris} H.~Kahn and T.~E.~Harris, \emph{Estimation of particle transmission by random sampling}, National Bureau of Standards applied mathematics series, 12 (1951):27--30.
\bibitem{Knuth:science} D.~Knuth,
\emph{Mathematics and computer science: coping with finiteness}, Science, 194(4271):1235--1242,
1976. 
\bibitem{Knuth:art} \bysame, \emph{The art of computer programming}, Volume~3, 2nd ed,
Addison Wesley Longman, 1988.
\bibitem{KMY} A.~Knutson, E.~Miller, and A.~Yong, \emph{Gr\"obner geometry of vertex decompositions and of flagged tableaux}, J. Reine Angew. Math. 630 (2009), 1--31. 
\bibitem{Knutson.Yong} A.~Knutson and A.~Yong, \emph{A formula for $K$-theory truncation Schubert calculus}, Int. Math. Res. Not. 2004, no. 70, 3741--3756. 
\bibitem{LS:transition} A.~Lascoux and M.~-P. Sch\"{u}tzenberger, \emph{Schubert polynomials and the Littlewood-Richardson
rule}, Letters in Math. Physics 10 (1985), 111--124.
\bibitem{Lenart} C.~Lenart, \emph{Combinatorial aspects of the $K$-theory of Grassmannians}, Ann. Comb. 4 (2000), no. 1, 67--82.
\bibitem{Macdonald} I. G. Macdonald, \emph{Notes on Schubert polynomials}, Publ. LACIM 6, Univ. de Qu\'ebec \`a
Montr\'eal, Montr\'eal, 1991.
\bibitem{Manivel}
 L. Manivel, \emph{Symmetric functions, Schubert polynomials and degeneracy loci}. Translated from the
1998 French original by John R. Swallow. SMF/AMS Texts and Monographs, American Mathematical
Society, Providence, 2001.
\bibitem{Narayanan} H. Narayanan, \emph{On the complexity of
computing Kostka numbers and Littlewood-Richardson coefficients},
J.~Alg.~Comb., Vol. 24, N. 3, 2006, 347--354.
\bibitem{Brendan:thesis} B.~Pawlowski, \emph{Permutation diagrams in symmetric function theory and Schubert calculus}, PhD thesis, University of Washington, 
2014.
\bibitem{Rasmussen} L.~E.~Rasmussen, \emph{Approximating the permanent: a simple approach}, Random Structures Algorithms 5 (1994), no. 2, 349--361.
\bibitem{ReinerTennerYong}
V.~Reiner, B.~Tenner, and A.~Yong, \emph{Poset edge densities, nearly reduced words, and barely set-valued tableaux},
Journal of Combinatorial Theory, Series A
Volume 158, August 2018, 66--125.
\bibitem{Samuels} M.~Samuels, \emph{Word posets, complexity, and Coxeter groups}, preprint, 2011. \textsf{arXiv:1101.4655}
\bibitem{Stanley1984} R.~P.~Stanley, \emph{On the number of reduced decompositions of elements
  of coxeter groups}, European Journal of Combinatorics \textbf{5} (1984),
  no.~4, 359 -- 372.
\bibitem{ECII} \bysame, \emph{Enumerative combinatorics}, Vol. 2. With a foreword by Gian-Carlo Rota and appendix 1 by Sergey Fomin. Cambridge Studies in Advanced Mathematics, 62. Cambridge University Press, Cambridge, 1999. xii+581 pp.
\bibitem{Valiant}
L.~G.~Valiant, \emph{The complexity of computing the permanent}, Theoret. Comput. Sci., 8(2):189--201, 1979.

\end{thebibliography}
\end{document}